\documentclass[11pt,twoside]{amsart}
\usepackage[utf8]{inputenc}
\usepackage[T1]{fontenc}
\usepackage[english]{babel}
\usepackage{amsmath, amssymb, amsthm}            
\usepackage{amstext, amsfonts, a4}
\usepackage{graphicx}
\usepackage{pifont}
\usepackage{hyperref}
\usepackage{color}
\usepackage{stmaryrd}
\usepackage{enumitem}
\usepackage{tikz,tkz-euclide}
\usetikzlibrary{calc}
\usetikzlibrary{arrows}
\tikzset{hidden/.style = {thick, dashed}}
\usetikzlibrary{math}
\usetikzlibrary{shapes.geometric}

\usepackage{pgfplots}
\pgfplotsset{compat=1.15}
\usepackage{mathrsfs}

\theoremstyle{plain}
	\newtheorem{Theo}{Theorem}[section] 
	\newtheorem{Prop}[Theo]{Proposition}        
	\newtheorem{Lem}[Theo]{Lemma}            
	\newtheorem{Cor}[Theo]{Corollary}


\theoremstyle{definition}
	\newtheorem{Def}[Theo]{Definition}
        
	\newtheorem{Nota}[Theo]{Notation}

\theoremstyle{remark}
	\newtheorem{Rema}[Theo]{Remark}


\def\RR{{\mathbb R}}    
\def\HH{{\mathbb H}^2}    
\newcommand{\Hyp}{{\mathbb H}}
\newcommand{\KVol}{\mbox{KVol}}

\newcommand{\Int}{\mbox{Int}}
\newcommand{\Vol}{\mbox{Vol}}

\makeatletter

\def\hyper@x#1,#2\relax{#1}
\def\hyper@y#1,#2\relax{#2}
\def\hyper@coords#1{#1}

\newif\ifhyper@vertical

\def\hyper@computer#1#2{%
  \edef\hyper@toscan{(#1)}
  \tikz@scan@one@point\hyper@coords\hyper@toscan
  \edef\hyper@sx{\the\pgf@x}
  \edef\hyper@sy{\the\pgf@y}
  \edef\hyper@toscan{(#2)}
  \tikz@scan@one@point\hyper@coords\hyper@toscan
  \edef\hyper@ex{\the\pgf@x}
  \edef\hyper@ey{\the\pgf@y}
  \pgfmathsetmacro{\hyper@mx}{(\hyper@ex + \hyper@sx)/2}
  \pgfmathsetmacro{\hyper@my}{(\hyper@ey + \hyper@sy)/2}
  \pgfmathsetmacro{\hyper@dx}{\hyper@ex - \hyper@sx}
  \pgfmathparse{\hyper@dx == 0 ? "\noexpand\hyper@verticaltrue" : "\noexpand\hyper@verticalfalse"}
  \pgfmathresult
  \ifhyper@vertical
  \edef\hyper@cmd{-- (\tikztotarget)}
  \else
  \pgfmathsetmacro{\hyper@dy}{\hyper@ey - \hyper@sy}
  \pgfmathsetmacro{\hyper@t}{\hyper@my/\hyper@dx}
  \pgfmathsetmacro{\hyper@cx}{\hyper@mx + \hyper@t * \hyper@dy}
  \pgfmathsetmacro{\hyper@radius}{veclen(\hyper@cx - \hyper@sx, \hyper@sy)}
  \pgfmathsetmacro{\hyper@sangle}{180 - atan2(\hyper@sy,\hyper@cx-\hyper@sx)}
  \pgfmathsetmacro{\hyper@eangle}{180 - atan2(\hyper@ey,\hyper@cx-\hyper@ex)}
  \edef\hyper@cmd{arc[radius=\hyper@radius pt, start angle=\hyper@sangle, end angle=\hyper@eangle]}
  \fi
}

\def\hyper@disc@computer#1#2{%
  \edef\hyper@toscan{(#1)}
  \tikz@scan@one@point\hyper@coords\hyper@toscan
  \edef\hyper@sx{\the\pgf@x}
  \edef\hyper@sy{\the\pgf@y}
  \edef\hyper@toscan{(#2)}
  \tikz@scan@one@point\hyper@coords\hyper@toscan
  \edef\hyper@ex{\the\pgf@x}
  \edef\hyper@ey{\the\pgf@y}
  \pgfmathsetmacro{\hyper@det}{\hyper@sx * \hyper@ey - \hyper@sy * \hyper@ex}
  \pgfmathparse{\hyper@det == 0 ? "\noexpand\hyper@verticaltrue" : "\noexpand\hyper@verticalfalse"}
  \pgfmathresult
  \ifhyper@vertical
  \edef\hyper@cmd{-- (\tikztotarget)}
  \else
  \pgfmathsetmacro{\hyper@mx}{(\hyper@ex + \hyper@sx)/2}
  \pgfmathsetmacro{\hyper@my}{(\hyper@ey + \hyper@sy)/2}
  \pgfmathsetmacro{\hyper@dx}{\hyper@ex - \hyper@sx}
  \pgfmathsetmacro{\hyper@dy}{\hyper@ey - \hyper@sy}
  \pgfmathsetmacro{\hyper@dradius}{\pgfkeysvalueof{/tikz/hyperbolic disc radius}}
  \pgfmathsetmacro{\hyper@t}{((\hyper@dradius)^2 - \hyper@sx * \hyper@ex - \hyper@sy * \hyper@ey)/(2 * (\hyper@sx * \hyper@ey - \hyper@sy * \hyper@ex))}
  \pgfmathsetmacro{\hyper@radius}{sqrt((\hyper@t)^2 + .25) * veclen(\hyper@dx,\hyper@dy)}
  \pgfmathsetmacro{\hyper@cx}{\hyper@mx + \hyper@t * \hyper@dy}
  \pgfmathsetmacro{\hyper@cy}{\hyper@my - \hyper@t * \hyper@dx}
  \pgfmathsetmacro{\hyper@sangle}{atan2(\hyper@sy-\hyper@cy,\hyper@sx - \hyper@cx)}
  \pgfmathsetmacro{\hyper@eangle}{atan2(\hyper@ey-\hyper@cy,\hyper@ex - \hyper@cx)}
  \pgfmathsetmacro{\hyper@eangle}{\hyper@eangle > \hyper@sangle + 180 ? \hyper@eangle - 360 : \hyper@eangle}
  \edef\hyper@cmd{arc[radius=\hyper@radius pt, start angle=\hyper@sangle, end angle=\hyper@eangle]}
\fi
}

\def\hyper@plane@tangent#1#2{%
  \edef\hyper@toscan{(#1)}
  \tikz@scan@one@point\hyper@coords\hyper@toscan
  \edef\hyper@sx{\the\pgf@x}
  \edef\hyper@sy{\the\pgf@y}
  \edef\hyper@toscan{(#2)}
  \tikz@scan@one@point\hyper@coords\hyper@toscan
  \edef\hyper@ex{\the\pgf@x}
  \edef\hyper@ey{\the\pgf@y}
  \pgfmathsetmacro{\hyper@ex}{\hyper@ex - \hyper@sx}
  \pgfmathsetmacro{\hyper@ey}{\hyper@ey - \hyper@sy}
  \pgfmathparse{\hyper@ex == 0 ? "\noexpand\hyper@verticaltrue" : "\noexpand\hyper@verticalfalse"}
  \pgfmathresult
  \ifhyper@vertical
  \pgfmathsetmacro{\hyper@d}{\hyper@ey/1cm}
  \pgfmathsetmacro{\hyper@radius}{\hyper@sy * exp(\hyper@d) - \hyper@sy}
  \edef\hyper@cmd{-- ++(0,\hyper@radius pt)}
  \else
  \pgfmathsetmacro{\hyper@d}{\hyper@ex > 0 ? veclen(\hyper@ex,\hyper@ey) : -veclen(\hyper@ex,\hyper@ey)}
  \pgfmathsetmacro{\hyper@radius}{abs(\hyper@sy * \hyper@d / \hyper@ex)}
  \pgfmathsetmacro{\hyper@sangle}{90 + atan(\hyper@ey/\hyper@ex)}
  \pgfkeysgetvalue{/tikz/hyperbolic plane target angle}{\hyper@eangle}
  \ifx\hyper@eangle\pgfutil@empty
  \pgfmathsetmacro{\hyper@d}{\hyper@d/1cm}
  \pgfmathsetmacro{\hyper@ey}{\hyper@ey/1cm}
  \pgfmathsetmacro{\hyper@tanhd}{tanh(\hyper@d)}
  \pgfmathsetmacro{\hyper@eangle}{acos((\hyper@d * \hyper@tanhd - \hyper@ey)/(\hyper@d - \hyper@ey * \hyper@tanhd))}
  \fi
  \edef\hyper@cmd{arc[radius=\hyper@radius pt, start angle=\hyper@sangle, end angle=\hyper@eangle]}
\fi
}

\tikzset{%
  hyperbolic disc radius/.initial={1cm},
  hyperbolic plane/.style={
    to path={
      \pgfextra{\hyper@computer\tikztostart\tikztotarget}
      \hyper@cmd
    }
  },
  hyperbolic plane tangent/.style={
    to path={
      \pgfextra{\hyper@plane@tangent\tikztostart\tikztotarget}
      \hyper@cmd
    }
  },
  hyperbolic disc/.style={
    to path={
      \pgfextra{\hyper@disc@computer\tikztostart\tikztotarget}
      \hyper@cmd
    }
  },
  hyperbolic plane target angle/.initial={},
}

\makeatother

\title{Algebraic intersection for a family of Veech surfaces}
\begin{document}

\author{Julien Boulanger}
\address{
UMR CNRS 5582,
Univ. Grenoble Alpes, CNRS, Institut Fourier, F-38000 Grenoble, France}
\email{julien.boulanger@univ-grenoble-alpes.fr}

\author{Erwan Lanneau}
\address{
UMR CNRS 5582,
Univ. Grenoble Alpes, CNRS, Institut Fourier, F-38000 Grenoble, France}
\email{erwan.lanneau@univ-grenoble-alpes.fr}

\author{Daniel Massart}
\address{
IMAG, CNRS,  Univ  Montpellier, France
}
\email{daniel.massart@umontpellier.fr }

\subjclass[2020]{Primary: 37E05. Secondary: 37D40}
\keywords{Lyapunov exponents}

\maketitle

%

\section{Introduction}
Our objects of study are translation surfaces and their geodesics. These structures arise in the study of rational polygonal billiards and more generally in Teichm\"uller dynamics. This paper focuses on  computing relations between lengths of geodesics and their intersections on those surfaces. 

\subsection{Motivation and results}
For any closed (meaning compact, connected, without boundary) oriented surface $X$, the algebraic intersection endows the first homology group $H_1(X,\RR)$ with a symplectic bilinear form denoted $\mbox{Int} (\cdot, \cdot)$. When $X$ is endowed with a Riemannian metric $g$ (possibly with singularities), one can ask the following question: how much can two curves of a given length intersect? Namely, what~is
\begin{equation}
\label{eq:KVol}
\KVol(X): = \Vol(X,g)\cdot \sup_{\alpha,\beta} \frac{\Int (\alpha,\beta)}{l_g (\alpha) l_g (\beta)},
\end{equation}
where the supremum ranges over all piecewise smooth closed curves $\alpha$ and $\beta$ in $X$, and $l_g(\cdot)$ denotes the length with respect to the Riemannian metric. It is readily seen that multiplying by the volume $\Vol (X,g)$ makes the quantity invariant by rescaling the metric $g$. This function is well defined, finite (see~\cite{MM}) and continuous in the metric.

Recent work~\cite{CKM,CKMcras} provides new estimates of $\KVol$ on the Teichm\"uller curve of some arithmetic translation surfaces $(X,\omega)$. In this paper we study the most natural family of non-arithmetic Teichm\"uller curves $(\mathcal T_n)_{n \geq 5}$, generated for $n \geq 5$ by the original Veech surface described in~\cite{Veech}, namely the surface arising from the unfolding of a right-angled triangle with angles $(\pi/2,\pi/n,(n-2)\cdot \pi/2n)$.
For odd $n$, $\mathcal T_n$ is canonically identified with $\mathbb H^2/\Gamma_n$ where $\Gamma_n$ is the Hecke triangle group of signature $(2,n,\infty)$ (see~\S\ref{sec:prem} and~\S\ref{sec:SL2R}). In this context, we establish the first known explicit formula for $\KVol$ (beyond the case of the moduli space of flat tori, see~\cite{MM}):
\begin{Theo}\label{thm:main:intro}
For odd integer $n \geq 5$, and any $(X,\omega) \in \mathcal T_n$, we have
\[ 
\KVol(X,\omega)=\frac{\frac{n}{2}  \cot \frac{\pi}{n} \cdot \frac1{\sin \frac{\pi}{n}}}{\cosh  d_{\mathrm{hyp}} (X,\gamma_{0,\infty})},
 \]
where 
$\gamma_{0,\infty}$ is the hyperbolic geodesic in $\Hyp^2$ with endpoints $0$ and $\infty$.
\end{Theo}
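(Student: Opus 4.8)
The plan is to evaluate $\KVol$ by combining a reduction to core curves of cylinders with the hyperbolic geometry of $\mathcal T_n\cong\HH/\Gamma_n$. Fix the Veech surface $(X_0,\omega_0)$ generating $\mathcal T_n$, so that every $(X,\omega)\in\mathcal T_n$ equals $g\cdot(X_0,\omega_0)$ for some $g\in\SL$. Since $(X_0,\omega_0)$ is completely periodic, I would first use the finiteness framework of \cite{MM} to reduce the supremum in \eqref{eq:KVol} to pairs $(\gamma_1,\gamma_2)$ of core curves of cylinders lying in two distinct periodic directions of $(X_0,\omega_0)$, say with holonomy vectors $v_1,v_2\in\RR^2$. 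Algebraic intersection numbers of such curves are topological, hence constant along the $\SL$-orbit, and if $\gamma$ is the core curve of a cylinder with holonomy vector $v$ then on $g\cdot(X_0,\omega_0)$ the corresponding core curve has length $|gv|$; since $\Vol$ is $\SL$-invariant this gives
\[ \KVol\big(g\cdot(X_0,\omega_0)\big)=\Vol(X_0,\omega_0)\cdot\sup_{(\gamma_1,\gamma_2)}\frac{\Int(\gamma_1,\gamma_2)}{|g v_1|\,|g v_2|}. \]

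Next I would rewrite each term of this supremum hyperbolically. Fixing $(\gamma_1,\gamma_2)$ and using $\det g=1$, so that $|g v_1\wedge g v_2|=|v_1\wedge v_2|$, a direct computation in $\SL/SO(2)\cong\HH$ shows
\[ |g v_1|\,|g v_2|=c(v_1,v_2)\,\cosh d_{\mathrm{hyp}}\!\big(X,\,\delta_{v_1,v_2}\big), \]
where $X$ is the point of $\HH$ representing $g\cdot(X_0,\omega_0)$, $\delta_{v_1,v_2}$ is the complete geodesic of $\HH$ joining the ideal points determined by the directions of $v_1$ and $v_2$, and $c(v_1,v_2)>0$ is explicit; for $v_1,v_2$ horizontal and vertical one has $\delta_{v_1,v_2}=\gamma_{0,\infty}$ and $c(v_1,v_2)=|v_1|\,|v_2|$. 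Hence $\KVol(X,\omega)$ is the supremum, over all pairs of periodic directions and all choices of core curves, of the functions
\[ X\ \longmapsto\ \frac{\Vol(X_0,\omega_0)\,\Int(\gamma_1,\gamma_2)}{c(v_1,v_2)\,\cosh d_{\mathrm{hyp}}(X,\delta_{v_1,v_2})}. \]

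The lattice structure of $\Gamma_n$ now takes over. Since $\Gamma_n$ is the $(2,n,\infty)$ triangle group it has a single cusp, so all periodic directions of $(X_0,\omega_0)$ form one $\Gamma_n$-orbit, every pair of periodic directions is $\Gamma_n$-equivalent to a pair $\{\infty,\gamma\cdot\infty\}$ with $\gamma\in\Gamma_n$, and both the number $\Int(\gamma_1,\gamma_2)/c(v_1,v_2)$ and the $\Gamma_n$-orbit of the geodesic $\delta_{v_1,v_2}$ are $\Gamma_n$-invariant. I expect the extremal orbit to be $\{\infty,0\}$, i.e. $\gamma$ the order-two elliptic element of $\Gamma_n$ (which fixes a point of $\gamma_{0,\infty}$), corresponding on $(X_0,\omega_0)$ to two transverse periodic directions exchanged by an affine involution. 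An explicit description of the cylinder decomposition of $(X_0,\omega_0)$ in these two directions — the widths of the cylinders and the algebraic intersection numbers of their core curves — should yield $\Vol(X_0,\omega_0)\cdot\Int(\gamma_1,\gamma_2)/c(v_1,v_2)=\tfrac n2\cot\tfrac\pi n\cdot\tfrac1{\sin\tfrac\pi n}$, and so, via the previous step, the lower bound
\[ \KVol(X,\omega)\ \ge\ \frac{\frac{n}{2}\cot\frac{\pi}{n}\cdot\frac{1}{\sin\frac{\pi}{n}}}{\cosh d_{\mathrm{hyp}}(X,\gamma_{0,\infty})}, \]
the distance being understood with respect to the $\Gamma_n$-orbit of $\gamma_{0,\infty}$.

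The hard part is the matching upper bound: showing that for every $X\in\HH$ the supremum above is attained on a $\Gamma_n$-translate of $\{\infty,0\}$, i.e. that no other pair of periodic directions does better (and, a priori, that neither does any union of parallel cylinder core curves nor any closed geodesic through a singularity). This requires a uniform upper bound on $\Int(\gamma_1,\gamma_2)/c(v_1,v_2)$ over all pairs of periodic directions, together with control of $\cosh d_{\mathrm{hyp}}(X,\gamma\cdot\gamma_{0,\infty})$ for the competing geodesics; it is the analogue, in this non-arithmetic family, of the difficult half of the estimates of \cite{CKM,CKMcras}, and I expect it to account for the bulk of the work.
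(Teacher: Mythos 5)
Your hyperbolic reformulation is essentially the paper's Propositions~\ref{prop:banane} and~\ref{prop:other:def}: writing $|gv_1|\,|gv_2|=|v_1\wedge v_2|\cosh d_{\mathrm{hyp}}(X,\delta_{v_1,v_2})$ is the same as the identity $\sin\theta(X,d,d')=1/\cosh d_{\mathrm{hyp}}(X,\gamma_{d,d'})$, and the candidate extremal pair and the constant $\frac n2\cot\frac\pi n\cdot\frac1{\sin\frac\pi n}$ are correct. But there are two genuine gaps. First, the reduction to \emph{core curves of cylinders} is wrong: the correct reduction (valid because $X$ lies in the minimal stratum $\mathcal H(n-3)$, so every closed geodesic is homologous to a union of \emph{closed saddle connections}) is to saddle connections, and the actual maximizers are the sides of the double $n$-gon, i.e.\ the short saddle connections $\alpha_m,\beta_m$ of length $\sin\frac\pi n$ passing through the singularity. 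These are strictly shorter than every cylinder core curve (the shortest horizontal core curve is $\alpha_1$, of length $\sin\frac{2\pi}n=2\sin\frac\pi n\cos\frac\pi n$), so a supremum restricted to core curves would not even produce the claimed lower bound.

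Second, and more seriously, the matching upper bound that you defer as ``the bulk of the work'' \emph{is} the theorem; without it you have only an inequality. The paper's proof of it has two independent components that your sketch does not supply. (a) A direct computation of $\KVol(X_0)$ for the double $n$-gon (Proposition~\ref{Double2m+1gon}): both saddle connections are subdivided using the transition diagrams of the $2m+1$ angular sectors into segments each of length $\geq l_0$ (``sandwiched'' vs.\ ``non-sandwiched'' segments), and a case-by-case analysis shows each pair of segments meets at most once (with one exceptional configuration handled by re-cutting along the other boundary of the cylinder), giving $\Int(\alpha,\beta)\le kl$ against $l(\alpha)l(\beta)>kl\,l_0^2$. (b) An interpolation step (Section~\ref{sec:extension}): after reducing, via the Veech group and the convexity of the regions $V(d,d')$, to pairs with $0\le d<\frac\Phi2<d'$, one shows the ratio $F_{(d,d')}(X)=\sin\theta(X,0,\infty)/\sin\theta(X,d,d')$ attains its minimum over the half fundamental domain $\mathcal D_+$ at $X_0$ (by a gradient argument in the interior and explicit monotonicity along the boundary arcs), which propagates the inequality $K(d,d')\sin\theta(\cdot,d,d')\le K(0,\infty)\sin\theta(\cdot,0,\infty)$ from the single point $X_0$ to the whole Teichm\"uller disc. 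Neither step is routine, and a uniform bound on $\Int/c(v_1,v_2)$ of the kind you invoke (Proposition~\ref{Examples_K}(ii) in the paper) only handles pairs outside the orbit of $(0,\infty)$ and does not by itself control the competition between $\cosh d_{\mathrm{hyp}}(X,\gamma\cdot\gamma_{0,\infty})$ for different $\gamma$.
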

In particular $\KVol$ is real analytic on $\mathcal T_n$ except along the  geodesic with endpoints $\cos \frac{\pi}{n}$ and  $\infty$.
\subsection{Context and history}
While $\KVol$ is a close cousin of the systolic ratio $\sup_{\alpha} \frac{\Vol(X,g)}{l_g (\alpha)^2}$, very little is known on the function $\KVol$. For any Riemannian surface $(X,g)$, we have $\KVol(X,g) \geq 1$, and equality holds if and only if 
$(X,g)$ is a flat torus~\cite{MM}. Almost all of the obvious questions about $\KVol$ on hyperbolic surfaces are currently open.
In this paper we propose to continue the study of $\KVol$ as a function on the moduli space of translation surfaces, originally initiated by the third named author in~\cite{CKM,CKMcras}.  The surface $X$ is called a translation surface if it is equipped with a translation structure, that is, an atlas of charts such that all transition functions are translations in $\mathbb R^2$. We assume that the chart domains cover all the surface $X$ except for finitely many points called singularities. The translation structure induces the structure of a smooth manifold and a flat Riemannian metric on $X$ punctured at the singular points. We require that the metric have a cone type singularity at each singular point. Although geodesics on $X$ are piecewise straight lines, they can be quite complicated: they may be unions of saddle connections with different directions.

More specifically, in~\cite{CKM}, $\mbox{KVol}$ is studied for ramified covers of the torus (or arithmetic Teichm\"uller curves). It is proved in~\cite{CKM} that $\mbox{KVol}$, defined on the Teichm\"uller disc of the surface tiled with three squares, is unbounded, but it does have a minimum, achieved at a surface, unique modulo symmetries, and otherwise fairly undistinguished. The interesting surfaces, i.e. the three square surfaces, and the surface tiled with six equilateral triangles, are local maxima, with $\mbox{KVol}=3$, where $3$ should be thought of as the ratio of the total area of the surface, to the area of the smallest cylinder of closed geodesics. The local maxima are not locally unique, they come in hyperbolic geodesics, in the Teichm\"uller curve viewed as a quotient of the hyperbolic plane by a Fuchsian group.

In the current paper, we extend the study to  non arithmetic Teichm\"uller curves.

\subsection{$\KVol$ on non arithmetic Teichm\"uller curves}
Teichm\"uller curves are isometrically immersed algebraic curves in the moduli space of Riemann surfaces. These arise as $\mathrm{SL}(2,\RR)$-orbit (or Teichm\"uller disc) of special flat surfaces that are called Veech surfaces. For $n=2m+1\geq 5$ we will denote by $X_0$ the original Veech surface~\cite{Veech} arising from the unfolding a right-angled triangle with angles $(\pi/2,\pi/n,(n-2)\cdot \pi/2n)$, also referred to as the \emph{double regular $n-$gon}. The surface $X_0$ lies in the stratum $\mathcal H(n-3)$ and its genus is $\mathrm{genus}(X_0)=(n-1)/2$. From Theorem \ref{thm:main:intro}, we deduce the
\begin{Cor}\label{cor:main}
	For any $X\in \mathcal T_n$ the following holds
$$
\frac{n}{2}  \cot \frac{\pi}{n} \leq \KVol(X) \leq \frac{n}{2}  \cot \frac{\pi}{n} \cdot \frac1{\sin \frac{\pi}{n}}.
$$
Moreover the bounds are sharp and:
\begin{enumerate}
\item The maximum of the function $\KVol$ on $\mathcal T_n$ is achieved, precisely, along $\gamma_{0,\infty}$, that is, by  images of the right-angled staircases under the Teichm\"uller geodesic flow (see Figure \ref{heptagon}).
\item The minimum of the function $\KVol$ on $\mathcal T_n$ is achieved, uniquely,  at $X_0$. 
\end{enumerate}
Finally, in the definition of $\KVol$, the supremum is achieved by pairs of curves that are (images of) pairs of sides of the double regular $n-$gon.
\end{Cor}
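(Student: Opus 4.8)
The plan is to deduce everything from Theorem~\ref{thm:main:intro} together with an elementary study of the function $X \mapsto \cosh d_{\mathrm{hyp}}(X,\gamma_{0,\infty})$ on the orbifold $\mathcal{T}_n \cong \HH/\Gamma_n$. The upper bound and item (1) are immediate: one has $\cosh d_{\mathrm{hyp}}(X,\gamma_{0,\infty}) \geq 1$ with equality exactly when $X$ lies on the image of $\gamma_{0,\infty}$, so the formula of Theorem~\ref{thm:main:intro} gives $\KVol(X) \leq \frac{n}{2}\cot\frac{\pi}{n}\cdot\frac{1}{\sin\frac{\pi}{n}}$, with equality precisely along that geodesic; and the flat surfaces corresponding to points of $\gamma_{0,\infty}$ are the images of the right-angled staircases under the Teichm\"uller geodesic flow, by the identification of the imaginary axis $\gamma_{0,\infty}$ with the orbit of the staircase model already made in \S\ref{sec:SL2R} (see Figure~\ref{heptagon}).

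For the lower bound and item (2), I would work in the standard fundamental domain $D = \{\, z \in \HH : |z| \geq 1,\ |\mathrm{Re}\,z| \leq \cos\frac{\pi}{n}\,\}$ of the Hecke group $\Gamma_n$. For $X$ represented by $z \in D$ one has $\cosh d_{\mathrm{hyp}}(X,\gamma_{0,\infty}) \leq \cosh d_{\HH}(z,\gamma_{0,\infty}) = |z|/\mathrm{Im}(z)$, and a one-variable computation (for fixed $\mathrm{Re}\,z$ the ratio decreases in $\mathrm{Im}\,z$, so the supremum is reached on the arc $|z|=1$, where it equals $(1-(\mathrm{Re}\,z)^2)^{-1/2}$, largest when $|\mathrm{Re}\,z| = \cos\frac{\pi}{n}$) gives $\cosh d_{\mathrm{hyp}}(X,\gamma_{0,\infty}) \leq \frac{1}{\sin\frac{\pi}{n}}$, hence $\KVol(X) \geq \frac{n}{2}\cot\frac{\pi}{n}$, with equality forcing $z$ to be one of the two corners $\pm\cos\frac{\pi}{n} + i\sin\frac{\pi}{n}$ of $D$. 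These corners are exchanged by the parabolic generator $z\mapsto z+2\cos\frac{\pi}{n}$ of $\Gamma_n$, hence represent a single point of $\mathcal{T}_n$, which is its unique order-$n$ cone point and corresponds to the double regular $n$-gon $X_0$ (the surface carrying the order-$n$ automorphism). At $X_0$ the imaginary axis is the closest lift of $\gamma_{0,\infty}$ to the corner $e^{i\pi/n}$ — the common perpendicular being the arc $|z|=1$, with foot the order-$2$ point $i$ — so $\cosh d_{\mathrm{hyp}}(X_0,\gamma_{0,\infty}) = \frac{1}{\sin\frac{\pi}{n}}$, i.e. $\KVol(X_0)=\frac{n}{2}\cot\frac{\pi}{n}$, realizing the minimum, and uniquely since the two corners are the only maximizers of $|z|/\mathrm{Im}(z)$ on $D$.

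The last assertion of the corollary is not new content: the optimal pair of curves is exactly the pair of sides of the double regular $n$-gon exhibited in the proof of Theorem~\ref{thm:main:intro}, carried over by the appropriate element of $\mathrm{SL}(2,\RR)$. The genuinely delicate input is the flat/hyperbolic dictionary underlying the argument — that $X_0$ is the order-$n$ orbifold point, and that no $\Gamma_n$-translate of $\gamma_{0,\infty}$ other than the imaginary axis is closer to it — but this is a matter of the geometry of the fundamental domain $D$ and of the cusps of $\Gamma_n$, and presents no real obstacle; everything else is the elementary one-variable estimate above.
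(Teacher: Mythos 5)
Your overall route is the paper's own: read everything off the closed formula of Theorem~\ref{thm:main:intro} by locating the extrema of $X\mapsto \cosh d_{\mathrm{hyp}}(X,\gamma_{0,\infty})$ on the fundamental domain. Your one-variable computation showing that $\cosh d_{\Hyp^2}(z,\gamma_{0,\infty})=|z|/\mathrm{Im}(z)\le 1/\sin\frac{\pi}{n}$ on $\mathcal D$, with equality only at the two corners, is exactly the (unwritten) computation behind the paper's sentence ``the furthest point of $\mathcal D$ from $\gamma_{0,\infty}$ is $X_0$''. The upper bound, item (1), and the \emph{uniqueness} half of item (2) are therefore in order.

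The step you have not actually established is the \emph{attainment} of the lower bound. With the distance in Theorem~\ref{thm:main:intro} taken in the quotient (as it must be for the formula to descend to $\mathcal T_n$), your chain $\cosh d_{\mathrm{hyp}}(X,\gamma_{0,\infty})\le \cosh d_{\Hyp^2}(z,\gamma_{0,\infty})\le 1/\sin\frac{\pi}{n}$ only yields $\KVol\ge\frac{n}{2}\cot\frac{\pi}{n}$; equality at $X_0$ requires the first inequality to be an equality at the corner $e^{i\pi/n}$, i.e.\ that no $\Gamma_n$-translate of $\gamma_{0,\infty}$ comes strictly closer to $e^{i\pi/n}$ than the imaginary axis — equivalently, that the $\Gamma_n$-orbit of the corner avoids the open wedge $\{\,\pi/n<\arg z<\pi-\pi/n\,\}$. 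You name this as ``the genuinely delicate input'' and then wave it off, but as written it is an unproved assertion about the Hecke tessellation, and it is precisely the kind of ``which translate is closest'' question that the paper spends Lemmas~\ref{Lemme_furthest_away}--\ref{hypothese_dd'} controlling. The clean fix — and what the paper actually does — is not to derive $\KVol(X_0)$ from the hyperbolic formula at all: Proposition~\ref{Double2m+1gon} computes $\sup \mathrm{Int}(\alpha,\beta)/(l(\alpha)l(\beta))=1/l_0^2$ directly on the flat surface, whence $\KVol(X_0)=\Vol(X_0)/l_0^2=\frac{n}{2}\cot\frac{\pi}{n}$ with no reference to translates of $\gamma_{0,\infty}$; this also yields the final assertion about the optimizing pair of curves, which the paper attributes to Proposition~\ref{Double2m+1gon} rather than to the proof of Theorem~\ref{thm:main:intro}. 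With that substitution your argument is complete and coincides with the paper's.
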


\begin{Rema}
The case of the regular $4m$-gon is dealt with in a forthcoming paper of the first author. When $n\equiv 2 \mod 4$, the surface $X_n$ belongs to a stratum with two conical points. This case is combinatorially a bit more complicated and we set it  aside  for future work.
\end{Rema}

Unlike the three-square surface case~\cite{CKM}, $\KVol$ is bounded on  the Teichm\"uller discs of  the double regular $n-$gon. More generally we show
\begin{Theo}
\label{thm:bound}
The function $\KVol$ is bounded on the Teichm\"uller disc of $X\in \mathcal H(2)$ if and only if $X$ is a primitive Veech surface i.e. $X$ is not arithmetic.
\end{Theo}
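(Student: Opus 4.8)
The plan is to split into the two implications. For the "only if" direction, I would argue the contrapositive: if $X\in\mathcal H(2)$ is arithmetic (i.e. a torus cover), then $\KVol$ is unbounded on its Teichm\"uller disc. The idea is the classical square-tiled picture: an arithmetic surface in $\mathcal H(2)$ covers a torus $T$, and the Teichm\"uller disc is $\mathrm{SL}(2,\RR)/\Gamma$ for a lattice $\Gamma$ commensurable to $\mathrm{SL}(2,\ZZ)$, which has cusps. Near a cusp, apply a highly sheared matrix $\begin{pmatrix} 1 & t\\ 0 & 1\end{pmatrix}$ in the right direction: the surface degenerates so that one cylinder in the horizontal direction becomes very long and thin while a transverse short cylinder persists. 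Taking $\alpha$ the core curve of the thin long cylinder and $\beta$ a short transverse saddle-connection loop, the product $l(\alpha) l(\beta)$ stays bounded (area is fixed) while $\Int(\alpha,\beta)$ can be forced to grow — more precisely, one follows the argument in~\cite{CKM} for the three-square surface, where the key mechanism is that in a cusp the ratio $\Int(\alpha,\beta)/(l(\alpha)l(\beta))$ behaves like the "number of intersections per unit length" and diverges along the Teichm\"uller flow. I would cite~\cite{CKM,CKMcras} for the three-square case and explain that the same degeneration works verbatim on any arithmetic surface in $\mathcal H(2)$, since all of them have a parabolic direction and a cusped Teichm\"uller curve.

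For the "if" direction — $X$ a primitive (non-arithmetic) Veech surface in $\mathcal H(2)$, hence lattice — I want to show $\KVol$ is bounded on the Teichm\"uller disc $\mathrm{SL}(2,\RR)\cdot X \cong \mathbb H^2/\Gamma$, where $\Gamma$ is now a non-arithmetic Fuchsian lattice (for $\mathcal H(2)$ these are the Veech surfaces classified by McMullen/Calta, with Teichm\"uller curves the $D$-discriminant curves). The strategy mirrors the proof of Theorem \ref{thm:main:intro}: by the Veech dichotomy, on a Veech surface the supremum in the definition of $\KVol$ is realized (or approximated) by saddle connections, and for a saddle connection $\gamma$ the length in the surface $A\cdot X$ is $\|A v_\gamma\|$ where $v_\gamma$ is the holonomy vector of $\gamma$ on $X$. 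Thus
\[
\KVol(A\cdot X) \;=\; \Vol(X)\cdot \sup_{\gamma,\delta}\frac{\Int(\gamma,\delta)}{\|Av_\gamma\|\,\|Av_\delta\|}.
\]
The content is that this stays bounded as $A$ ranges over $\mathrm{SL}(2,\RR)$. The crucial finiteness input is a \emph{no-small-triangles} / bounded-geometry statement for non-arithmetic Veech surfaces: there is $\varepsilon_0>0$ such that any two non-parallel saddle connections $\gamma,\delta$ satisfy $|\det(v_\gamma,v_\delta)| \ge \varepsilon_0$. This is false for arithmetic surfaces (where $\det$ takes arbitrarily small nonzero rational values after scaling) but true for primitive Veech surfaces because the holonomy field is a real quadratic field and the set of values $\det(v_\gamma,v_\delta)$ lands in a discrete subset (a fractional ideal) bounded away from $0$ — this is exactly the arithmetic that makes the Veech group non-arithmetic. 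Granting this, and the linear-algebra fact that for a pair of vectors $u,w$ in $\RR^2$ one has $\|Au\|\,\|Aw\| \ge |\det(Au,Aw)| = |\det A|\,|\det(u,w)| = |\det(u,w)|$ for $A\in\mathrm{SL}(2,\RR)$, we get
\[
\frac{\Int(\gamma,\delta)}{\|Av_\gamma\|\,\|Av_\delta\|} \;\le\; \frac{\Int(\gamma,\delta)}{|\det(v_\gamma,v_\delta)|} \;\le\; \frac{\Int(\gamma,\delta)}{\varepsilon_0}.
\]
The final ingredient is then a bound on the \emph{combinatorial} intersection number $\Int(\gamma,\delta)$ of two saddle connections in terms of their lengths on the fixed surface $X$: on a compact translation surface two saddle connections of holonomy lengths $\ell_1,\ell_2$ intersect at most $C\ell_1\ell_2$ times for a constant $C=C(X)$ (each intersection point forces a definite amount of area in a thin neighbourhood, or: a geodesic segment of length $\ell$ meets a fixed triangulation in $O(\ell)$ segments). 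Combining, $\KVol(A\cdot X) \le \Vol(X)\cdot C(X)/\varepsilon_0$, uniformly in $A$.

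The main obstacle is making the "no small triangles" statement precise and citing or proving it cleanly for all primitive Veech surfaces in $\mathcal H(2)$ — it is morally the statement that a non-arithmetic Veech group cannot have unipotents "accumulating" in a way that shears a pair of transverse short saddle connections, and it ultimately rests on the algebraic structure (trace field = holonomy field is quadratic, and the relevant $\det$-values form a lattice in that field). I expect this to be extractable from the work of Smillie–Weiss on bounded geometry / the "no small virtual triangles" property, or from McMullen's description of $\mathcal H(2)$ Veech surfaces, but it should be stated as a lemma and proved or carefully referenced. A secondary technical point is justifying that the supremum over all piecewise-smooth curves reduces to the supremum over saddle connections on a Veech surface; this uses the Veech dichotomy plus the fact that any closed curve is homotopic to a concatenation of saddle connections of no greater length, which already appears implicitly in the proof of Theorem \ref{thm:main:intro} and can be invoked here.
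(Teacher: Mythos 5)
Both directions of your proposal have genuine gaps. For the ``only if'' direction, the degeneration you describe does not work: if $\alpha$ is the core of a cylinder that becomes long and thin and $\beta$ is a fixed transverse saddle connection, then $\Int(\alpha,\beta)$ is a topological invariant and does not grow, while $l(\alpha)l(\beta)$ stays bounded below, so the ratio does not diverge. Worse, your stated criterion (``a parabolic direction and a cusped Teichm\"uller curve'') holds equally for primitive Veech surfaces, so it cannot separate the two cases. The mechanism the paper uses is different: two \emph{parallel} saddle connections $\alpha,\beta$ with $\Int(\alpha,\beta)\neq 0$, contracted simultaneously by the Teichm\"uller flow in the orthogonal direction, give $\Int(\alpha,\beta)/(l(\alpha)l(\beta))\geq e^{2t}/(|\alpha||\beta|)\to\infty$. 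Such pairs exist on arithmetic surfaces in $\mathcal H(2)$ because, by McMullen, every square-tiled surface in $\mathcal H(2)$ admits a one-cylinder decomposition, and one-cylinder decompositions in genus $\geq 2$ force parallel intersecting boundary saddle connections (Lemma~\ref{lm:strebel}). You also omit the case where $X$ is not a Veech surface at all, which the paper handles via the dense-or-closed dichotomy.

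For the ``if'' direction, the final combination is a non sequitur: from $\Int(\gamma,\delta)\leq C\,\ell_1\ell_2$ (lengths on the \emph{fixed} surface $X$) and $\|Av_\gamma\|\|Av_\delta\|\geq \varepsilon_0$ you cannot conclude a uniform bound on $\Int(\gamma,\delta)/(\|Av_\gamma\|\|Av_\delta\|)$, since $\ell_1\ell_2$ is unbounded over pairs of saddle connections. What is actually needed is $\Int(\gamma,\delta)\leq C\,|v_\gamma\wedge v_\delta|$ uniformly, and that is precisely the hard content of Proposition~\ref{prop:veech:bounded}, proved there via the cylinder decomposition identity~\eqref{cusp_identity_double} together with no-small-triangles to control the negative (singular) intersection terms. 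Crucially, that estimate fails whenever two parallel saddle connections intersect nontrivially (then $v_\gamma\wedge v_\delta=0$ but $\Int\neq 0$), so one must check that primitive Veech surfaces in $\mathcal H(2)$ have only two-cylinder decompositions, whose boundary saddle connections never intersect nontrivially in parallel --- a stratum-specific verification your argument never makes. Without it your proof would show boundedness for every non-arithmetic Veech surface in every stratum, which the paper explicitly states is false. Finally, your claim that no-small-triangles fails for arithmetic surfaces is incorrect (it holds for all lattice surfaces, by Vorobets and Smillie--Weiss); the arithmetic/primitive dichotomy in $\mathcal H(2)$ is governed by the existence of one-cylinder decompositions, not by small triangles.
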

The same discussion applies to the Teichm\"uller disc of surfaces in the Prym eigenform loci in $\mathcal H(4)$ and $\mathcal H(6)$ (see \cite{Lanneau_Nguyen_Prym}) by looking at possible cylinder diagrams. We suspect that $\KVol$ is never bounded from above on the Teichm\"uller disc of arithmetic Veech surfaces. The first author recently obtained an example of a non arithmetic Teichm\"uller disc where $\KVol$ is {\em unbounded}.
\begin{Rema}
Another difference with~\cite{CKM} is that the minimum of $\KVol$ on $\mathcal T_n$ is achieved, uniquely, by the most interesting surface in the disc, namely the double $n-$gon. On the other hand, similarly to the three-square case, the local maxima, which are also global maxima are achieved along hyperbolic geodesics in the Teichm\"uller disc, which correspond to surfaces with a right-angled template, see Figure~\ref{heptagon}.
\end{Rema}

\subsection{Comparison of norms}
The quantity $\KVol$ appears naturally in the comparisons between the stable norm $||\cdot||_s$ and the Hodge norm. More precisely (see~\cite{Massart1996,MM}):
$$
||\cdot||_s \leq \sqrt{\mathrm{Vol(M,g)}} ||\cdot||_{\mathrm Hodge} \leq \KVol(M,g) ||\cdot||_s
$$
The upper bound is sharp. More recently, in the context of quadratic differentials, a new comparison between Hodge and Teichm\"uller norms has been established in~\cite{KW2022}. The latter is useful for the proof of effective versions of results on dynamics of mapping class groups (see the work~\cite{AH}). It would be interesting to establish an analogue of Theorem~\ref{thm:main:intro} for the Hodge and Teichm\"uller norms and see wether it gives better effective results on mapping class groups.

\subsection{Organization of the paper}
In \S\ref{sec:prem} we recall prerequisites on translation surfaces, Veech groups, Veech surfaces, and we describe the right-angled staircase models for our surfaces. In \S\ref{section:boundedness} we explain why $\KVol$ is bounded on some Teichmüller curves. In \S\ref{sec:SL2R} and \S\ref{sec:another look} we explain how to interpret $\KVol$ geometrically, in terms of hyperbolic distance on the Teichmüller curve (identified with a quotient of $\mathbb H^2$ by a Fuchsian group), and we give an upper bound for KVol on the  Teichmüller curve. In particular this proves that the maximum is achieved by the staircase surfaces. 
 
In \S\ref{sec:2m+1-gon} we perform the first main step of the proof: we compute $\KVol$ for the double $n$-gon, by a geometric method, carefully looking at how saddle connections intersect depending on their directions.
 
In \S\ref{sec:extension} we perform the second main step of the proof: we interpolate, by analytical methods, between the double $n$-gon and the staircase surfaces, thus proving that the minimum is achieved, uniquely, by the former. 
 
\subsection*{Acknowledgments} This work has been partially supported by the LabEx PERSYVAL-Lab (ANR-11-LABX-0025-01) funded by the French program Investissement d’avenir.

\section{Background}
\subsection{Preliminaries}
\label{sec:prem}

We review useful results concerning flat surfaces, Veech groups and Teichm\"uller curves. For a general introduction to translation surfaces and their moduli spaces,
we refer the reader to the surveys~\cite{survey_Zorich, survey_Wright, survey_Massart} and the references therein.
Throughout this paper, $\mathcal H(k_1,\dots,k_r)$ will denote the stratum of moduli space of Abelian differentials $\omega$ on Riemann surfaces $X$ of genus $g\geq 1$, having $r$ zeros, of orders $k_i$ for $i=1,\dots,r$, or equivalently of angles $2\pi(k_i+1)$. For simplicity we will simply denote $X$ for the pair $(X,\omega)$ if the context is clear.

As previously mentioned, geodesics on translation surfaces can be complicated. However for $X$ in the minimal stratum $\mathcal H(2g-2)$, any geodesic is homologous to a union of {\em closed} saddle connections. Hence the supremum in Equation~\eqref{eq:KVol} can be taken over all saddle connections of the surface.

We denote the holonomy vector of a saddle connection $\alpha$ by $\overrightarrow{\alpha} = \int_\alpha \omega \in \RR^2$. By abuse of notation we will often confuse $\overrightarrow{\alpha}$ with $\alpha$. We have $l(\alpha)=||\overrightarrow{\alpha}||$.

A Teichm\"uller curve is an isometrically immersed complex curve in the moduli space of Riemann surfaces. The projection of a closed $\mathrm{GL}_2(\RR)$-orbit to the moduli space of Riemann surfaces is a Teichm\"uller curve, so as is common we will also refer to closed $\mathrm{GL}_2(\RR)$-orbits as Teichm\"uller curves in $\mathcal H(k_1,\dots,k_r)$. A point on a closed $\mathrm{GL}_2(\RR)$-orbit is called a Veech surface. Equivalently a translation surface is a Veech surface if and only if its Veech group is a lattice in $\mathrm{SL}_2(\RR)$. The easiest examples of Teichm\"uller curves are orbits of arithmetic surfaces (namely ramified covers of the two-torus). Veech~\cite{Veech} discovered a family of non arithmetic surfaces arising from a right-angled triangle with angles $(\pi/2,\pi/n,(n-2)\cdot \pi/2n)$ by the unfolding construction described in~\cite{KZ}. We give a description of these surfaces in the next section.

\subsection{The staircase model for the double $n-$gon}
\label{sec:even:odd}

For $n\geq 3$, the surface $X_0$ (arising from the unfolding of a right triangle with angles $(\pi/2,\pi/n,(n-2)\pi/2n)$) can be described as follows (see~\cite{KZ,Veech})
\begin{itemize}
\item If $n\geq 8$ is even then $X_0$ is the quotient of the regular $n-$gon (with radius $1$) by gluing opposite sides by translation. Moreover, if $n = 4m$ for $m \geq 2$ the surface belongs to the stratum $\mathcal H(2m-2)$, while if $n = 4m+2$ it belongs to the stratum $\mathcal H(m-1,m-1)$.
\item If $n\geq 5$ is odd, then $X_0$ is the quotient of the double of the regular $n-$gon (with radius $1$) by gluing opposite sides by translation. It belongs to the stratum $\mathcal H(n-3)$.
\end{itemize}
As mentioned in the introduction we will focus on the latter case and in the sequel we will assume $n=2m+1$ is an odd integer. 

The double $n$-gon $X_0$ has a staircase model $S_0$ in its $\mathrm{GL}_2(\RR)$-orbit, drawn in Figure~\ref{heptagon} and described in~\cite{Monteil}, which we shall use  with a few modifications (see also~\cite[\S 5]{Veech}).
\begin{figure} [h!]
\begin{center}
\hskip -7mm
\begin{tikzpicture}[line cap=round,line join=round,>=triangle 45,x=1cm,y=1cm, scale=.5,rotate=90]
\draw (-1,0.1) node[above]{${\scriptstyle 2}$};
\draw (-0.6,1.5) node[above]{${\scriptstyle 0}$};
\draw (0.5,0.7) node[above]{${\scriptstyle 1}$};
\draw (0.5,-0.8) node[above]{${\scriptstyle 3}$};
\draw (-1,-1.5) node[above]{${\scriptstyle 4}$};
\draw (0,-2.1) node[above]{${\scriptstyle 5}$};
\draw (-1,-2.6) node[above]{${\scriptstyle 6}$};
\draw (0,-3.7) node[above]{${\scriptstyle 7}$};
\draw (-1,-4.3) node[above]{${\scriptstyle 8}$};
\draw (-0.6,-5.2) node[above]{${\scriptstyle 9}$};

\draw [line width=1pt] (0,2)-- (-1.5636629649360594,1.2469796037174672)-- (-1.9498558243636472,-0.44504186791262856)-- (-0.8677674782351166,-1.801937735804838)-- (0.8677674782351164,-1.8019377358048383)-- (1.9498558243636472,-0.4450418679126288)-- (1.5636629649360596,1.246979603717467);
\draw [line width=1pt] (1.5636629649360596,1.246979603717467)-- (0,2);
\draw [line width=1pt] (0,-5.603875471609676)-- (-1.56366296493606,-4.850855075327143)-- (-1.9498558243636475,-3.158833603697047)-- (-0.8677674782351166,-1.8019377358048378)-- (0.8677674782351164,-1.801937735804838)-- (1.949855824363647,-3.158833603697048)-- (1.563662964936059,-4.850855075327143);
\draw [line width=1pt] (1.563662964936059,-4.850855075327143)-- (0,-5.603875471609676);
\draw [line width=.8pt,dash pattern=on 1pt off 1pt] (1.563662964936059,-4.850855075327143)-- (-1.9498558243636475,-3.158833603697047);
\draw [line width=.8pt,dash pattern=on 1pt off 1pt] (1.949855824363647,-3.158833603697048)-- (-0.8677674782351166,-1.801937735804838);
\draw [line width=.8pt,dash pattern=on 1pt off 1pt] (0.8677674782351164,-1.8019377358048383)-- (-1.9498558243636472,-0.44504186791262856);
\draw [line width=.8pt,dash pattern=on 1pt off 1pt] (1.9498558243636472,-0.4450418679126288)-- (-1.5636629649360594,1.2469796037174672);
\draw [line width=.8pt,dash pattern=on 1pt off 1pt] (1.563662964936059,-4.850855075327143)-- (-1.56366296493606,-4.850855075327143);
\draw [line width=.8pt,dash pattern=on 1pt off 1pt] (1.949855824363647,-3.158833603697048)-- (-1.9498558243636475,-3.158833603697047);
\draw [line width=.8pt,dash pattern=on 1pt off 1pt] (1.9498558243636472,-0.4450418679126288)-- (-1.9498558243636472,-0.44504186791262856);
\draw [line width=.8pt,dash pattern=on 1pt off 1pt] (1.5636629649360596,1.246979603717467)-- (-1.5636629649360594,1.2469796037174672);
\end{tikzpicture}
%
\begin{tikzpicture}[line cap=round,line join=round,>=triangle 45,x=1cm,y=1cm, scale=.5]
\draw (2,15) node[above]{cut \& paste};
\draw (2,15) node[below]{rotate by ${\scriptstyle 3\pi/2}$};
\draw [->,>=stealth] (0,15) -- (4,15);
\end{tikzpicture}
\begin{tikzpicture}[line cap=round,line join=round,>=triangle 45,x=1cm,y=1cm, scale=.5]
\draw (6.5,-11.8) node[above]{${\scriptstyle 0}$};
\draw (5,-11.6) node[above]{${\scriptstyle 9}$};
\draw (4.4,-10.8) node[above]{${\scriptstyle 8}$};
\draw (2.5,-10.3) node[above]{${\scriptstyle 1}$};
\draw (-6.8,-8.7) node[above]{${\scriptstyle 5}$};
\draw (-5.6,-9.1) node[above]{${\scriptstyle 4}$};
\draw (-4.2,-8.7) node[above]{${\scriptstyle 3}$};
\draw (-2.8,-9.1) node[above]{${\scriptstyle 6}$};
\draw (-1.6,-10.3) node[above]{${\scriptstyle 7}$};
\draw (0.5,-10.4) node[above]{${\scriptstyle 2}$};

\draw [line width=1pt] (5,-11.603875471609676)-- (8.127325929872118,-11.603875471609676);
\draw [line width=.8pt,dash pattern=on 1pt off 1pt] (3.43633703506394,-10.850855075327143)-- (3.0501441756363525,-9.158833603697047);
\draw [line width=.8pt,dash pattern=on 1pt off 1pt] (3.43633703506394,-10.850855075327143)-- (-0.07718175423576668,-9.158833603697047);
\draw [line width=.8pt,dash pattern=on 1pt off 1pt] (-3.9768934029630607,-9.158833603697047)-- (-0.07718175423576668,-9.158833603697047);
\draw [line width=.8pt,dash pattern=on 1pt off 1pt] (-0.46337461366335475,-10.850855075327143)-- (-0.07718175423576668,-9.158833603697047);
\draw [line width=.8pt,dash pattern=on 1pt off 1pt] (-3.9768934029630607,-9.158833603697047)-- (-2.89480505683453,-7.801937735804837);
\draw [line width=.8pt,dash pattern=on 1pt off 1pt] (3.43633703506394,-10.850855075327143)-- (6.563662964936059,-10.850855075327143);
\draw [line width=.8pt,dash pattern=on 1pt off 1pt] (6.563662964936059,-10.850855075327143)-- (5,-11.603875471609676);

\draw [line width=1pt] (-3.9768934029630616,-9.158833603697047)-- (-5.712428359433295,-9.158833603697047);

\draw [line width=1pt] (-5.712428359433295,-9.158833603697047)-- (-8.530051662032058,-7.801937735804838);

\draw [line width=1pt] (-8.530051662032058,-7.801937735804838)-- (-6.794516705561825,-7.801937735804838);

\draw [line width=.8pt,dash pattern=on 1pt off 1pt] (-6.794516705561825,-7.801937735804838)-- (-5.712428359433295,-9.158833603697047);

\draw [line width=1pt] (-6.794516705561824,-7.801937735804836)-- (-2.89480505683453,-7.801937735804837);

\draw [line width=1pt] (-2.89480505683453,-7.801937735804837)-- (-0.07718175423576668,-9.158833603697047);

\draw [line width=1pt] (-0.07718175423576668,-9.158833603697047)-- (3.0501441756363525,-9.158833603697047);

\draw [line width=1pt] (3.0501441756363525,-9.158833603697047)-- (6.563662964936059,-10.850855075327143);

\draw [line width=1pt] (6.563662964936059,-10.850855075327143)-- (8.127325929872118,-11.603875471609676);
\draw [line width=1pt] (5,-11.603875471609676)-- (3.43633703506394,-10.850855075327143);
\draw [line width=1pt] (3.43633703506394,-10.850855075327143)-- (-0.46337461366335475,-10.850855075327143);
\draw [line width=1pt] (-0.46337461366335475,-10.850855075327143)-- (-3.9768934029630607,-9.158833603697047);
\draw [line width=1pt] (-6.794516705561824,-7.801937735804836)-- (-3.9768934029630607,-9.158833603697047);

\end{tikzpicture}
		\begin{tikzpicture}[line cap=round,line join=round,>=triangle 45,x=1cm,y=1cm, scale=1.7]
		\clip(-3.6,-1) rectangle (5.473333333333334,3);
		\draw [line width=1pt]  (-1.4088116512993818,2.1906431337674115)-- (-0.9749279121818236,2.1906431337674115);
		\draw [line width=1pt]  (-1.4088116512993818,2.1906431337674115)-- (-1.4088116512993818,1.4088116512993818);
		\draw [line width=1pt,dash pattern=on 1pt off 1pt] (-0.9749279121818236,2.1906431337674115)-- (-0.9749279121818236,1.4088116512993818);
		\draw [line width=1pt,dash pattern=on 1pt off 1pt] (-0.9749279121818236,1.4088116512993818)-- (0,1.4088116512993818);
		\draw [line width=1pt,dash pattern=on 1pt off 1pt] (0,1.4088116512993818)-- (0,0.4338837391175581);
		\draw [line width=1pt,dash pattern=on 1pt off 1pt] (0,0.4338837391175581)-- (0.7818314824680298,0.4338837391175581);
		\draw [line width=1pt] (0,0)-- (0.7818314824680298,0);
		\draw [line width=1pt] (0.7818314824680298,0)-- (0.7818314824680298,0.4338837391175581);
		\draw [line width=1pt] (0,0)-- (0,0.4338837391175581);
		\draw [line width=1pt] (0.7818314824680298,0.4338837391175581)-- (0.7818314824680298,1.4088116512993818);
		\draw [line width=1pt] (0,0.4338837391175581)-- (-0.9749279121818236,0.4338837391175581);
		\draw [line width=1pt] (-0.9749279121818236,0.4338837391175581)-- (-0.9749279121818236,1.4088116512993818);
		\draw [line width=1pt] (-0.9749279121818236,1.4088116512993818)-- (-1.4088116512993818,1.4088116512993818);
		\draw [line width=1pt] (0,1.4088116512993818)-- (0.7818314824680298,1.4088116512993818);
		\draw [line width=1pt] (0,1.4088116512993818)-- (0,2.1906431337674115);
		\draw [line width=1pt] (0,2.1906431337674115)-- (-0.9749279121818236,2.1906431337674115);
		\draw [line width=1pt] (1.1643590901911454,2.8356409098088546)-- (1.1643590901911454,2.1928533001223154);
		\draw [line width=1pt] (1.1643590901911454,2.1928533001223154)-- (1.506379233516814,2.1928533001223154);
		\draw [line width=1pt] (1.506379233516814,2.1928533001223154)-- (1.506379233516814,1.2080455471101073);
		\draw [line width=1pt] (1.506379233516814,1.2080455471101073)-- (2.3724046373012526,1.2080455471101073);
		\draw [line width=1pt] (2.3724046373012526,1.2080455471101073)-- (2.3724046373012526,0.3420201433256687);
		\draw [line width=1pt] (2.3724046373012526,0.3420201433256687)-- (3.3572123903134607,0.3420201433256687);
		\draw [line width=1pt] (3.3572123903134607,0.3420201433256687)-- (3.3572123903134607,0);
		\draw [line width=1pt] (3.3572123903134607,0)-- (4,0);
		\draw [line width=1pt] (4,0)-- (4,0.3420201433256687);
		\draw [line width=1pt] (4,0.3420201433256687)-- (4,1.2080455471101073);
		\draw [line width=1pt] (4,1.2080455471101073)-- (3.3572123903134607,1.2080455471101073);
		\draw [line width=1pt] (3.3572123903134607,1.2080455471101073)-- (3.3572123903134607,2.1928533001223154);
		\draw [line width=1pt] (3.3572123903134607,2.1928533001223154)-- (2.3724046373012526,2.1928533001223154);
		\draw [line width=1pt] (2.3724046373012526,2.1928533001223154)-- (2.3724046373012526,2.8356409098088546);
		\draw [line width=1pt] (2.3724046373012526,2.8356409098088546)-- (1.506379233516814,2.8356409098088546);
		\draw [line width=1pt] (1.506379233516814,2.8356409098088546)-- (1.1643590901911454,2.8356409098088546);
		\draw [line width=1pt,dash pattern=on 1pt off 1pt] (1.506379233516814,2.1928533001223154)-- (1.506379233516814,2.8356409098088546);
		\draw [line width=1pt,dash pattern=on 1pt off 1pt] (1.506379233516814,2.1928533001223154)-- (2.3724046373012526,2.1928533001223154);
		\draw [line width=1pt,dash pattern=on 1pt off 1pt] (2.3724046373012526,2.1928533001223154)-- (2.3724046373012526,1.2080455471101073);
		\draw [line width=1pt,dash pattern=on 1pt off 1pt] (2.3724046373012526,1.2080455471101073)-- (3.3572123903134607,1.2080455471101073);
		\draw [line width=1pt,dash pattern=on 1pt off 1pt] (3.3572123903134607,1.2080455471101073)-- (3.3572123903134607,0.3420201433256687);
		\draw [line width=1pt,dash pattern=on 1pt off 1pt] (3.3572123903134607,0.3420201433256687)-- (4,0.3420201433256687);
		\draw [line width=1pt] (-2,0)-- (-2.9510565162951536,0);
		\draw [line width=1pt] (-2.9510565162951536,0)-- (-2.9510565162951536,0.5877852522924731);
		\draw [line width=1pt] (-2.9510565162951536,0.5877852522924731)-- (-3.538841768587627,0.5877852522924731);
		\draw [line width=1pt] (-3.538841768587627,0.5877852522924731)-- (-3.538841768587627,1.5388417685876266);
		\draw [line width=1pt] (-3.538841768587627,1.5388417685876266)-- (-2.9510565162951536,1.5388417685876266);
		\draw [line width=1pt] (-2.9510565162951536,1.5388417685876266)-- (-2,1.5388417685876266);
		\draw [line width=1pt] (-2,1.5388417685876266)-- (-2,0);
		\draw [line width=1pt] (-3.538841768587627,1.5388417685876266)-- (-3.538841768587627,0.5877852522924731);
		\draw [line width=1pt,dash pattern=on 1pt off 1pt] (-2.951056516295153,0.5877852522924728)-- (-2,0.5866666666666664);
		\draw [line width=1pt,dash pattern=on 1pt off 1pt] (-2.951056516295153,0.5877852522924728)-- (-2.9510565162951536,1.5388417685876266);
		\begin{scriptsize}
		\draw[color=black] (0.4333333333333327,-0.1) node {$\alpha_1$};
		
		\draw[color=black] (0.9,0.2) node {$\beta_3$};

		\draw[color=black] (0.9,0.9) node {$\beta_2$};
		
		\draw[color=black] (-0.44666666666666754,0.3) node {$\alpha_2$};
		\draw[color=black] (-1.14,1.3) node {$\alpha_3$};
		\draw[color=black] (0.1,1.8) node {$\beta_1$};

		\draw[color=black] (1.3,2.1) node {$\alpha_4$};
		\draw[color=black] (2.02,1.1) node {$\alpha_3$};
		\draw[color=black] (2.953333333333333,0.2) node {$\alpha_2$};
		\draw[color=black] (3.7666666666666666,-0.1) node {$\alpha_1$};
		
		\draw[color=black] (4.2,0.2) node {$\beta_4$};
		\draw[color=black] (4.2,0.8) node {$\beta_3$};
		
		\draw[color=black] (3.5,1.7) node {$\beta_2$};
		
		\draw[color=black] (2.5,2.5) node {$\beta_1$};

		\draw[color=black] (2.02,3.2466666666666657) node {$a_1$};
		\draw[color=black] (1.42,3.2466666666666657) node {$b_1$};
		
		\draw[color=black] (-1.9,0.4) node {$\beta_2$};
		
		\draw[color=black] (-2.5,-0.1) node {$\alpha_1$};
		
		\draw[color=black] (-3.3,0.5) node {$\alpha_2$};
		\draw[color=black] (-1.9,0.9666666666666663) node {$\beta_1$};
		
		\draw[color=black] (-3.66,1.26) node {$q_2$};
		
		\end{scriptsize}
		\end{tikzpicture}

\caption{Above, the double $(2m+1)-$gon $X_0$ is cut into $2(2m-1)$ triangles, which are re-arranged into a slanted stair-shape, whose slanted  sides are then rotated and sheared to create the right-angled stair-shape $S_0$. Below, a staircase model $S_0$ for  $X_0$ ($n=2m+1$), for $m=2,3,4$, with parameters 
$(\alpha_i,\beta_i)_{i=1,\dots,m}$. The surface $S_0$ for $m=2$ is usually shown rotated by $180$ degrees, as the golden L (see~\cite{DFT},~\cite{DL18}).}
 \label{heptagon}
\end{center}
\end{figure}
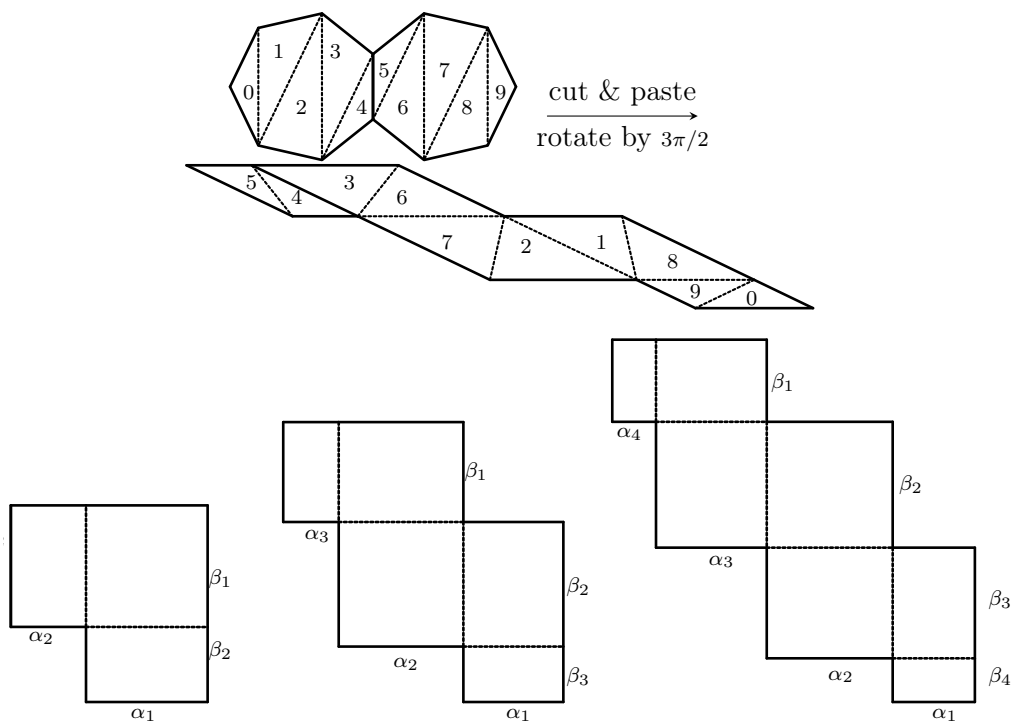
In order to exhibit the Veech group and a fundamental domain, one needs to compute the dimensions of the $m$ horizontal cylinders of $S_0$. This immediately follows 
from~\cite[\S5]{Veech}: for $k=1,\dots,m$, the core curve of the kth cylinder of $S_0$ is given by $\alpha_{k-1}+\alpha_{k}$ (with the dummy condition $\alpha_{0}:=0$), and its height is $\beta_{m-k+1}$. From~\cite[\S5]{Veech} we draw:
\begin{equation}
\label{eq:lengths}
l(\alpha_k)=l(\beta_k) = \sin \frac{2k\pi}{n}, \textrm{ for any } k=1,\dots,m, \ \mathrm{and}\ \Vol(S_0) = \frac{n}{2}\cos \frac{\pi}{n}.
\end{equation}
In particular all moduli are the same and equal to
$$
\frac{{\mathrm height}}{{\mathrm width}} =\frac{\sin \frac{2(m-k+1)\pi}{n}}{ \sin \frac{2(k-1)\pi}{n} + \sin \frac{2k\pi}{n}} = 
\frac{\sin \frac{(2k-1)\pi}{n}}{ 2\sin \frac{2(k-1+k)\pi}{2n}\cos\frac{2(k-1-k)\pi}{2n}} = \frac1{2\cos \pi/n}.
$$
From this knowledge, it is easy to construct affine homeomorphisms of $S_0$ whose derivative maps (outside the singularity) are parabolic elements. We describe the Veech group of $S_0$ in the next section.

\subsection{Veech group and fundamental domain}
For $n\geq 3$, we denote by $\Gamma_n$ the Hecke triangle group of level $n$ (or signature $(2,n,\infty)$) generated by 
\[
T=\left(
\begin{array}{cc}
1 & \Phi_n \\
0 & 1 
\end{array}
\right)
\mbox{ and  }
R=
\left(
\begin{array}{cc}
0 & -1 \\
1 & 0 
\end{array}
\right),
\mbox{ setting } \Phi_n = 2 \cos \frac{\pi}{n}. 
\]
In the following we will simply use the notation $\Phi$ for $ \Phi_n $. The group $\Gamma_n$, acting on the hyperbolic plane $\Hyp^2$, has a fundamental domain $\mathcal D$ depicted in Figure~\ref{fundamendal:domain}. It is comprised between the vertical geodesics with abscissae $-\Phi /2$ and $\Phi /2$, and the geodesic with endpoints $\pm 1$. The Veech group of $S_0$ coincides with $\Gamma_n$. In the fundamental domain the staircase model $S_0$ is represented by the point $i$, while $X_0$ corresponds to the lower corners of $\mathcal D$ (the intersection between a vertical boundary and the circular boundary).
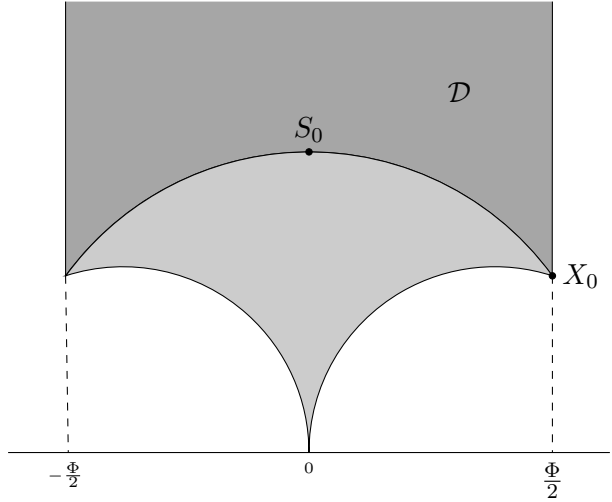
\begin{figure} [h!]
\begin{center}
\begin{tikzpicture}[scale=4,every to/.style={hyperbolic plane}]

\def\h{1.5}
\coordinate (b) at (-3,0);
\coordinate (dpm) at (-0.8090,0.5878);
\coordinate (dpmbis) at (0.8090,0.5878);

\draw (-1,0) -- (1,0);

\filldraw [draw=black, fill=gray!70](-0.8090,\h) to (dpm) to (dpmbis) -- (0.8090,\h);
\filldraw [draw=black, fill=gray!40] (dpm) to (0,0) to (dpmbis) to cycle;

\draw[dashed] (dpmbis) -- (0.8090,0);

\draw[dashed] (dpm) -- (-0.80,0);

\node at (0.5,1.2)  {$\mathcal D$};
\filldraw[black] (dpmbis) circle (0.3pt) node[right]{$X_0$};
\filldraw[black] (0,1) circle (0.3pt) node[above]{$S_0$};

\node[below] at (0,0)  {$\scriptscriptstyle  0$};
\node[below] at (-0.8090,0)  {$\scriptscriptstyle -\frac{\Phi}{2}$};
\node[below] at (0.8090,0)  {$\frac{\Phi}{2}$};
\end{tikzpicture}
\end{center}
\caption{A fundamental domain $\mathcal D$ of the Veech group of the staircase model of the double regular  $(2m+1)$-gon, along with its reflection in the geodesic $(-1,1)$.
}\label{fundamendal:domain}
\end{figure}

\section{Boundedness of $\KVol$ on Teichm\"uller discs}\label{section:boundedness}
Before studying the maximum of $\KVol$ on Teichm\"uller discs, we give a criterion to ensure that it is indeed a bounded function. We conclude this section with a proof of Theorem~\ref{thm:bound}.

We can first notice that if there are two parallel saddle connections $\alpha,\beta$ on $X\in \mathcal H(n-3)=\mathcal H(2m-2)$ having non trivial intersection then applying the Teichm\"uller geodesic flow in the orthogonal direction of $\alpha,\beta$, we get for all $t>0$
$$
\KVol(g_tX) \geq \frac{1}{e^{-2t}|\alpha||\beta|}.
$$
Thus $\KVol$ is not bounded on the Teichm\"uller disc of $X$. Actually, this remark applies to a large class of translation surfaces (of genus at least two): those decomposed into a single metric cylinder, as we will see below. \newline 

Before proving Lemma~\ref{lm:strebel} we recall the notion of the angle of a horizontal saddle connection on a surface whose horizontal foliation has only closed leaves. The union of all (horizontal) saddle connections and the singularity defines a finite oriented graph $\Gamma$. Orientation on the edges comes from the canonical orientation of the horizontal foliation. At the singularity $p$ the direction of saddle connections attached to $p$ alternates (between directions toward $p$ and from $p$) as we follow the clockwise order. For a saddle connection $\gamma$ one can count the number of different sectors between the two directions it determines. This gives an angle $(2k+1)\pi$ well defined modulo $2(2m-1)\pi$. Observe that if $\gamma$ has angle $\pi$ then it is the boundary of a metric cylinder embedded into the surface.

\begin{Lem}
\label{lm:strebel}
If $X$ has a cylinder decomposition  all of whose boundary saddle connections  have trivial intersection pairwise, then there is a saddle connection with  angle $\pi$. In particular if $X$ has a one cylinder decomposition and $\mathrm{genus}(X)>1$ then there are two parallel saddle connections intersecting non trivially.
\end{Lem}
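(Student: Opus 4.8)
The plan is to work with the finite oriented graph $\Gamma$ formed by the horizontal saddle connections together with the singularity $p$, and to exploit the cylinder structure combinatorially. First I would recall that in a cylinder decomposition, each cylinder $C$ is bounded on each side by a union of horizontal saddle connections, and the ribbon structure of $C$ (i.e. the identification of its two boundary circles) gives a matching on the saddle connections that appear on its boundaries. Since by hypothesis no two boundary saddle connections intersect, each saddle connection $\gamma$ bounds cylinders only "from one side at a time" in a controlled way; more precisely, I would analyze what the non-intersection hypothesis forces about the cyclic order, at the singularity $p$, of the $2(2m-1)$ prongs (separatrices) of the horizontal foliation. Two horizontal saddle connections $\gamma_1,\gamma_2$ fail to intersect on the surface precisely when, reading the prongs around $p$ in clockwise order, the four endpoints of $\gamma_1$ and $\gamma_2$ are not linked (do not alternate). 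So the hypothesis says the chord diagram on the $2(2m-1)$ prongs determined by the saddle connections is \emph{non-crossing}.

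The key step is then: a non-crossing chord diagram on a cyclically ordered set, in which the chords use up all the prongs (every prong is an endpoint of some saddle connection, since every separatrix runs into $p$ or another singularity — here there is only one singularity), must contain a "short" chord, i.e. one joining two prongs that are adjacent in the cyclic order, or separated only by prongs already paired among themselves. Translating back: such a chord corresponds to a saddle connection $\gamma$ that bounds exactly one sector on one side, hence has angle $\pi$ on that side in the sense defined before the lemma, i.e. $\gamma$ is the boundary of an embedded metric cylinder. I would make this precise by an innermost-chord / minimal-arc argument: among all saddle connections, pick one $\gamma$ whose two endpoints cut off a minimal arc of the cyclic order of prongs; minimality plus the non-crossing (non-intersection) hypothesis forces that arc to contain no prong, so $\gamma$ has angle $\pi$.

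For the second assertion: if $X$ has a one-cylinder decomposition with $\mathrm{genus}(X) > 1$, suppose for contradiction that the boundary saddle connections pairwise have trivial intersection. By the first part there is a saddle connection $\gamma$ of angle $\pi$, hence $\gamma$ bounds an embedded metric cylinder $C'$; but a one-cylinder decomposition has a single cylinder, and an embedded cylinder of closed horizontal geodesics parallel to $\gamma$ forces (by comparing with the single ambient cylinder) either $\gamma$ to be non-separating in a way incompatible with genus $>1$, or the surface to be a torus. I would spell this out by a small Euler-characteristic / combinatorics count: a one-cylinder surface in $\mathcal H(2m-2)$ has exactly $2m-1$ horizontal saddle connections arranged in a single "interval-exchange" pattern on the cylinder boundary, and an angle-$\pi$ saddle connection among them would split this pattern, contradicting that all $2m-1$ of them lie on the boundary of the \emph{same} cylinder unless $m=1$. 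Hence some pair of boundary saddle connections must intersect; since they are all horizontal they are parallel, giving the two parallel saddle connections with non-trivial intersection.

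The main obstacle I expect is making the combinatorial "innermost chord" argument airtight in the presence of the singularity structure — in particular being careful that "trivial intersection pairwise" is exactly the non-crossing condition on the prong diagram (including the subtle case where two saddle connections share the endpoint $p$ but correspond to chords sharing a vertex), and that the minimal-arc chord genuinely yields angle exactly $\pi$ rather than some other odd multiple. The second, global step (ruling out the one-cylinder case) should then be a short bookkeeping argument once the first part is in hand.
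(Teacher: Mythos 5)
Your proposal is correct and follows essentially the same route as the paper: the paper likewise picks the saddle connection of minimal angle at the singularity and notes that any prong inside the minimal sector would force either a smaller angle or a nontrivial intersection, which is exactly your innermost-chord argument phrased in the language of non-crossing chord diagrams. The paper leaves the one-cylinder consequence implicit; your counting argument (the angle-$\pi$ saddle connection would have to bound the unique cylinder all by itself on one side, which is impossible for genus $>1$) is the intended justification.
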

\begin{proof}
We consider the saddle connection $\gamma$ having the smallest angle $(2k+1)\pi$ at $p$ (see Figure~\ref{fig:separatrix}). Assume $k>0$. Then $\gamma$ determines $2k+1$ sectors and so $2k$ saddle connections $\beta_1,\dots,\beta_{2k}$. Since the angle of $\gamma$ is minimal, no $\beta_i$ can begin and end inside the sector of angle $(2k+1)\pi$ cut by $\gamma$. Therefore the intersection of $\beta_i$ with $\gamma$ is non trivial for every $i=1,\dots,k$.
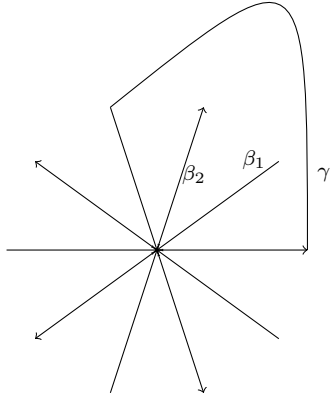
\begin{figure}[h]

		\begin{center}
		\begin{tikzpicture}[scale=2]
		
 \draw [-to] (0,0) --(1.0, 0.00);
 \draw [to-] (0,0) --(0.81, 0.59);
 \draw [-to] (0,0) --(0.31, 0.95);
 \draw [to-] (0,0) --(-0.31, 0.95);
 \draw [-to] (0,0) --(-0.81, 0.59);
 \draw [to-] (0,0) --(-1.0, 0.00);
 \draw [-to] (0,0) --(-0.81, -0.59);
 \draw [to-] (0,0) --(-0.31, -0.95);
 \draw [-to] (0,0) --(0.31, -0.95);
 \draw [to-] (0,0) --(0.81, -0.59);

 \draw (1.0, 0.00) .. controls (1,2) .. (-0.31, 0.95);
 \draw (1,0.5) node[right]{$\scriptstyle{\gamma}$};		
 \draw (0.5,0.6) node[right]{$\scriptstyle{\beta_1}$};		
  \draw (0.1,0.5) node[right]{$\scriptstyle{\beta_2}$};		
		\end{tikzpicture}
	\end{center}
	\caption{A separatrix diagram in $\mathcal H(6)$. The singularity $p$ has conical angle $10\pi$ and $\alpha$ has angle $3\pi$ (or $7\pi$ modulo $10\pi$)}
	\label{fig:separatrix}
\end{figure}
\end{proof}
From this observation and the fact that one-cylinder surfaces are dense, we immediately deduce that $\KVol$ is not bounded on any connected component of $\mathcal H(2m-2)$ for $m\geq 2$. Similarly, since $\KVol$ is a continuous function, it is not bounded on the Teichm\"uller disc of a generic (with respect to the Masur--Veech measure) surface $X$. Veech surfaces are exceptionally symmetric translation surfaces and are not generic if $m\geq 2$. For those surfaces one has the following converse.
\begin{Prop}
\label{prop:veech:bounded}
$\KVol$ is bounded on the Teichm\"uller disc of a Veech surface in $\mathcal H(2m-2)$ if there are no parallel saddle connections intersecting non trivially.
\end{Prop}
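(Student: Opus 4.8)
The plan is to reduce this statement about the whole Teichm\"uller disc to a finiteness statement about saddle connections of $X$ alone, and then to settle the latter with the help of the lattice property of the Veech group. First I would note that for every $g\in\mathrm{SL}_2(\RR)$ the surface $gX$ again lies in $\mathcal H(2m-2)$, so the supremum defining $\KVol(gX)$ may be taken over saddle connections, as recalled above; since $\Vol$ is $\mathrm{SL}_2(\RR)$-invariant and $\Int$ depends only on homology classes,
\[
\KVol(gX)=\Vol(X)\cdot\sup_{\alpha,\beta}\frac{\Int(\alpha,\beta)}{\|g\,\overrightarrow{\alpha}\|\,\|g\,\overrightarrow{\beta}\|},
\]
the supremum being over saddle connections $\alpha,\beta$ of $X$. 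If $\alpha\parallel\beta$ then $\Int(\alpha,\beta)=0$ by hypothesis; otherwise $\|g\overrightarrow{\alpha}\|\,\|g\overrightarrow{\beta}\|\ge|\det(g\overrightarrow{\alpha},g\overrightarrow{\beta})|=|\det(\overrightarrow{\alpha},\overrightarrow{\beta})|>0$ since $\det g=1$. Hence, for \emph{every} $g$,
\[
\KVol(gX)\le\Vol(X)\cdot\sup_{\alpha\not\parallel\beta}\frac{\Int(\alpha,\beta)}{|\det(\overrightarrow{\alpha},\overrightarrow{\beta})|},
\]
and it remains only to prove that this last supremum is finite, which is a statement about $X$ alone.

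\emph{A combinatorial bound on intersections.} Fix non-parallel saddle connections $\alpha,\beta$. Since $X$ is a Veech surface, the direction of $\beta$ is periodic (Veech dichotomy, \cite{Veech}), so $X$ decomposes into finitely many cylinders in that direction; let $w_\beta>0$ be the smallest of the widths of the (one or two) cylinders adjacent to $\beta$. After rotating so that $\beta$ is vertical, write $\overrightarrow{\alpha}=(a,b)$ with $a\ne0$, so that $|\det(\overrightarrow{\alpha},\overrightarrow{\beta})|=|a|\,l(\beta)$; developing $\alpha$ to a straight segment, its horizontal coordinate is monotone, whence $|a|=\sum_i n_i w_i$, where $n_i$ is the number of arcs into which the cylinder boundaries cut $\alpha$ inside the $i$-th cylinder. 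Every point of $\alpha\cap\beta$ is an endpoint of such an arc contained in a cylinder adjacent to $\beta$, so $\Int(\alpha,\beta)\le\#(\alpha\cap\beta)\le 2n_{i_0}$ for a suitable adjacent cylinder $C_{i_0}$, whence $\Int(\alpha,\beta)\le 2|a|/w_\beta$ and
\[
\frac{\Int(\alpha,\beta)}{|\det(\overrightarrow{\alpha},\overrightarrow{\beta})|}\le\frac{2}{l(\beta)\,w_\beta}.
\]

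\emph{Finitely many values of $l(\beta)\,w_\beta$.} It then suffices to bound $l(\beta)\,w_\beta$ away from $0$ over all saddle connections $\beta$. For a cylinder $C$ of direction $\theta$, circumference $c$, and area $\mathrm{Area}(C)$ one has $l(\beta)\,w(C)=l(\beta)\,\mathrm{Area}(C)/c$; since $\overrightarrow{\beta}$ is parallel to the core of $C$, the ratio $l(\beta)/c$, and hence the whole quantity $l(\beta)\,w(C)$, is unchanged under any affine automorphism of $X$ (its linear part being in $\mathrm{SL}_2(\RR)$). Because $X$ is a Veech surface, $\HH/\Gamma$ has finite covolume and thus finitely many cusps, so the periodic directions of $X$ form finitely many orbits under the Veech group $\Gamma$; moreover each periodic direction carries only finitely many saddle connections, each bounding at most two cylinders. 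Hence $\{\,l(\beta)\,w_\beta:\beta\text{ a saddle connection}\,\}$ is a finite set of positive reals; with $c_0>0$ its minimum, the three steps combine to $\KVol(gX)\le 2\,\Vol(X)/c_0$ for all $g\in\mathrm{SL}_2(\RR)$, proving the proposition.

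\emph{Main obstacle.} I expect the delicate point to be the last step: checking carefully that $l(\beta)\,w_\beta$ is a genuine affine invariant so that it assumes only finitely many values — this is exactly where the lattice (equivalently, finite-covolume) property of the Veech group enters, through the finiteness of the number of cusps of $\HH/\Gamma$. The combinatorial count in the middle step is routine but requires treating separately the case where $\beta$ bounds a single cylinder on both of its sides from the case of two distinct adjacent cylinders.
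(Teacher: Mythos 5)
Your reduction to the $\mathrm{SL}_2(\RR)$-invariant quantity $\sup_{\alpha\not\parallel\beta}\Int(\alpha,\beta)/|\det(\overrightarrow{\alpha},\overrightarrow{\beta})|$ is the right move (it is what the paper formalizes as $K(d,d')$ in \S 5), and your last step is sound: $l(\beta)\,w_\beta$ is indeed an affine invariant taking finitely many positive values, because a lattice Veech group has finitely many cusps and each periodic direction carries finitely many saddle connections. Contrary to what you anticipate, that is not where the difficulty lies. The gap is in the combinatorial bound. Since the singularity lies on the union of the vertical saddle connections, every arc of $\alpha$ is a full crossing of its cylinder and $|a|=\sum_i n_i w_i$ does hold; the problem is the chain $\Int(\alpha,\beta)\le\#(\alpha\cap\beta)\le 2n_{i_0}$. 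The algebraic intersection number can receive a contribution of $\pm1$ from the common endpoint at the cone point, and that contribution is not accounted for by arcs in cylinders adjacent to $\beta$: the first and last arcs of $\alpha$ need not lie in such a cylinder. In the degenerate case where $n_{i_0}=0$ for both adjacent cylinders one can still have $\Int(\alpha,\beta)=\pm1$ (two saddle connections meeting only at the singularity), and your estimate then degenerates to $\Int(\alpha,\beta)/|\det(\overrightarrow{\alpha},\overrightarrow{\beta})|\le 1/|\overrightarrow{\alpha}\wedge\overrightarrow{\beta}|$, which is not controlled by $l(\beta)\,w_\beta$.

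To close this you need a uniform lower bound $|\overrightarrow{\alpha}\wedge\overrightarrow{\beta}|>M>0$ over all pairs of non-parallel saddle connections, i.e.\ the ``no small virtual triangles'' property of Veech surfaces (Vorobets; the easy direction of the Smillie--Weiss characterization). This is precisely the key external input in the paper's proof, which fixes the sign of all non-singular intersections, writes the exact identity $1=\sum_k c_k(d)\,\Int(\alpha_k,\beta)/(\alpha_k\wedge\beta)$ over the boundary saddle connections of the cylinder decomposition in the direction of $\alpha$, and isolates the terms with $\Int=-1$ (singular-only intersections), bounding their total by $r/M$ via this property. Once you add that input to handle your degenerate case, your argument goes through and is essentially a reorganization of the paper's.
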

\begin{proof}[Proof of Proposition~\ref{prop:veech:bounded}]
Let us consider only surfaces which have total area $1$. Let $X$ be a Veech surface. Let $\theta$ be the angle associated to two periodic directions $(d,d')$ having saddle connections $\alpha,\beta$ with nontrivial intersections. The hypothesis ensures that $d \neq d'$, so that up to swapping $d$ and $d'$ we may assume  $\theta \in ]0,\pi[$. Then, any intersection between a saddle connection $\alpha$ with direction $d$, and a saddle connection $\beta$ with direction $d'$, if it occurs outside the singularity, is positive. Therefore, given two saddle connections $\alpha$ and $\beta$, with respective directions $d$ and $d'$, either $\Int (\alpha, \beta)=-1$, in which case $\alpha$ and $\beta$ intersect only once, at the singularity, or $\Int (\alpha, \beta) \geq 0$.

By Veech dichotomy, $X$ is decomposed into cylinders $C_1,\dots,C_r$ (of heights $h_1(d),\dots,h_r(d)$) with direction $d$ and saddle connections $\alpha_i$. Observe that in the minimal stratum $\mathcal H(2m-2)$ the number of cylinders $r$ is bounded by the genus $m$ of the surface. We can subdivide $\beta$ by looking at the intersections $\Int ( \alpha_{k}, \beta)$. This gives:
\begin{equation}\label{cusp_identity_double}
l(\beta)\cdot \sin \theta = \sum_{k=1}^r  h_k(d) \Int ( \alpha_{k}, \beta) = \sum_{k=1}^r  c_k(d) \cfrac{\Int ( \alpha_{k}, \beta)}{l(\alpha_{k})},
\end{equation}
where $c_k(d)=h_k(d) \cdot l(\alpha_{k})$ represents the area of an embedded parallelogram supported on $\alpha_{k}$. We write $l(\alpha_{k})l(\beta) = \alpha_{k} \wedge \beta/\sin \theta$, so
$$
1 = \sum_{k=1}^r  c_k(d) \cfrac{\Int ( \alpha_{k}, \beta)}{\alpha_{k} \wedge \beta}.
$$
Writing in a slightly different way:
$$
1 = \sum_{\Int > 0}  c_k(d) \cfrac{\Int ( \alpha_{k}, \beta)}{\alpha_{k} \wedge \beta} - \sum_{\Int =-1}   \cfrac{c_k(d)}{\alpha_{k} \wedge \beta}.
$$
By~\cite{Vorobets}, there are no small triangles in $X$ {\em i.e.}  there exists $M>0$, depending only on $X$, such that $|\alpha_{k} \wedge \beta| > M$ (this is actually the easy part of the characterization of Veech surfaces in \cite{SW}). In particular since $c_k(d) \leq \mathrm{Area(X)}=1$, we have 
\[
\sum_{\Int =-1}   \cfrac{c_k(d)}{\alpha_{k} \wedge \beta} < r \cdot \frac{1}{M} < C
\]
 for some uniform constant $C=C(X)$ (recalling $r\leq m)$. Thus
\[
\cfrac{\Int ( \alpha_{k}, \beta)}{\alpha_{k} \wedge \beta}  < \cfrac{1+C}{c_k(d)} < \cfrac{1+C}{M}.
\]
Hence $\cfrac{\Int ( \alpha_{k}, \beta)}{l(\alpha_{k})l(\beta)}$ is uniformly bounded on the Teichm\"uller disc of $X$. Since $\mathrm{Area}(X)=1$, $\KVol$ is uniformly bounded as well.
\end{proof}
In $\mathcal{H}(2)$ we have a more precise description.
\begin{proof}[Proof of Theorem~\ref{thm:bound}]
By~\cite{Mc} Teichm\"uller discs are either dense or closed. So if $X$ is not a Veech surface, $\KVol$ is not bounded from above on its Teichm\"uller disc. For Veech surfaces, a quick inspection of possible cylinder diagrams leads to the following observation: for two-cylinder decompositions, there are no parallel saddle connections with non trivial intersections. Now a Veech surface in genus two is either primitive or square-tiled. By~\cite[Corollary A.2]{Mc:spin}, square-tiled surfaces all admit one-cylinder decompositions. Since primitive Veech surfaces in $\mathcal H(2)$ do not have one-cylinder decompositions, we get the desired result.
\end{proof}

\section{$\mathrm{SL}_2(\mathbb{R})-$action and directions in the Teichm\"uller disc}\label{sec:SL2R}

The action of $\mathrm{SL}_2(\mathbb R)$ on moduli spaces provides a powerful tool to study the dynamics of the translation flow on $X$. This group is also acting affinely on surfaces, preserving the intersection form. Saddle connections are mapped to saddle connections, but  lengths are not preserved in general. The right quantity to consider is not the length of $\alpha$ or $\beta$ but rather the quantity $\overrightarrow{\alpha} \wedge \overrightarrow{\beta}=l(\alpha)l(\beta)\sin \theta$, where $\theta$ is the angle between the holonomy vectors $\overrightarrow{\alpha},\overrightarrow{\beta}$ associated to $\alpha,\beta$. The wedge product is invariant under $\mathrm{SL}_2(\mathbb R)$ and is twice the area of a (virtual) triangle delimited by $\alpha$ and $\beta$. \medskip

This observation motivates the following definition.
\begin{Def}
For $d \in \RR P^1$, we say that a saddle connection in $S_0$  has direction $d$ if it has direction $d$ in the plane template of Figure \ref{heptagon}. For $M \in \mathrm{GL}_2^+ (\RR)$ we say that a saddle connection $\alpha$ in $M\cdot S_0$ has direction $d$ if $M^{-1}\cdot \alpha$ has direction $d$ in $S_0$.
\end{Def}
This is a bit counter-intuitive because $\alpha$ may not have direction $d$ in a plane template for $M\cdot S_0$, but it makes sense with the following \medskip

\begin{Prop}\label{prop:banane}
Using the identifications \footnote{Note that $\chi$ defines a right action of $ GL_2^+(\RR)$. See~\cite[Section 6.1]{survey_Massart},  as to why we should quotient by $SO_2(\RR)$ on the left, and act by $ GL_2^+(\RR)$ on the right.}
\[ \Psi : d = [x:y] \in \RR P^1 \mapsto -\frac{x}{y} \in \RR \cup \{ \infty \} \equiv \partial \HH \]
and, for $M = \begin{pmatrix} a & b \\ c & d \end{pmatrix} \in SO_2(\RR) \backslash GL_2^+(\RR)$,
\[\chi : M \cdot S_0 \in \mathcal T_n 
\mapsto \frac{di+b}{ci+a} \in \HH, \]
the locus of surfaces in $\mathcal T_n$ where the directions $d$ and $d'$ make an (unoriented) angle $\theta \in ]0, \frac{\pi}{2}]$ is the banana neighbourhood
\[
\gamma_{d,d',r}= \{ z \in \Hyp^2: d_{\mathrm{hyp}}(z, \gamma_{d,d'} )=r \}
\]
where $\cosh r = \cfrac{1}{\sin \theta}$, $\gamma_{d,d'}$ denotes the hyperbolic geodesic with endpoints $\Psi (d)$ and $\Psi (d')$, and $d_{hyp}$ is the hyperbolic distance.\newline

In particular, the locus of surfaces in $\mathcal T_n$ where the directions $d$ and $d'$ are orthogonal is the hyperbolic geodesic with endpoints $\Psi (d)$ and $\Psi (d')$. \newline
\end{Prop}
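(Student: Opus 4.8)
The plan is to reduce the statement to a direct computation in the upper half-plane, exploiting the fact that both $\Psi$ and $\chi$ are (anti)equivariant for the $GL_2^+(\RR)$-actions involved, so that it suffices to check the claim at one convenient base surface and then transport it by the group action. First I would record how the relevant data transform: if a saddle connection in $S_0$ has holonomy vector $v = (x,y)$, then in $M\cdot S_0$ the corresponding saddle connection has holonomy $Mv$, and the (unoriented) angle $\theta$ between two directions $d,d'$ in $M\cdot S_0$ satisfies $\sin\theta = \dfrac{|Mv \wedge Mw|}{\|Mv\|\,\|Mw\|} = \dfrac{\det M \,|v\wedge w|}{\|Mv\|\,\|Mw\|}$ where $v,w$ represent $d,d'$. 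The locus in question is thus $\{M \in SO_2(\RR)\backslash GL_2^+(\RR) : \sin\theta = 1/\cosh r\}$, and I want to show this is exactly the set $\chi^{-1}(\gamma_{d,d',r})$.

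The key geometric input is the classical formula for the distance from a point $z = u + iv \in \HH$ to a geodesic: if the geodesic $\gamma_{p,q}$ has real endpoints $p < q$, then
\[
\cosh d_{\mathrm{hyp}}(z,\gamma_{p,q}) = \frac{|z-p|\,|z-q|}{(q-p)\,v}\cdot\frac{q-p}{\text{(appropriate normalization)}},
\]
more precisely $\sinh$ of the signed distance and $\cosh$ admit standard closed forms in terms of $u,v,p,q$; I would use the version $\cosh d_{\mathrm{hyp}}(z,\gamma_{p,q}) = \dfrac{(u-p)(u-q) + v^2 + \text{stuff}}{\dots}$, or more cleanly the cross-ratio/Cayley-transform normalization that turns $\gamma_{p,q}$ into the imaginary axis. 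For the special geodesics $\gamma_{0,\infty}$ (the imaginary axis) the formula is simply $\cosh d_{\mathrm{hyp}}(u+iv, \gamma_{0,\infty}) = \dfrac{\sqrt{u^2+v^2}}{v}$. So the cleanest route is: first treat the case $d = [0:1]$, $d'=[1:0]$ (i.e. $\Psi(d)=0$, $\Psi(d')=\infty$), where $v = (0,1)$, $w=(1,0)$, $v\wedge w = -1$, and for $M = \begin{pmatrix} a & b \\ c & d\end{pmatrix}$ one computes $\|Mv\|^2 = b^2 + d^2$, $\|Mw\|^2 = a^2+c^2$, $\det M = ad - bc$, so $\dfrac1{\sin\theta} = \dfrac{\sqrt{(a^2+c^2)(b^2+d^2)}}{ad-bc}$; then check directly that with $\chi(M\cdot S_0) = z = \dfrac{di+b}{ci+a} = \dfrac{(di+b)(-ci+a)}{a^2+c^2} = \dfrac{(ad-bc)i + (ab+cd)}{a^2+c^2}$, one has $\dfrac{|z|}{\Im z} = \dfrac{\sqrt{(ab+cd)^2 + (ad-bc)^2}}{ad-bc} = \dfrac{\sqrt{(a^2+c^2)(b^2+d^2)}}{ad-bc}$, since $(ab+cd)^2 + (ad-bc)^2 = (a^2+c^2)(b^2+d^2)$. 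Hence $\cosh d_{\mathrm{hyp}}(z,\gamma_{0,\infty}) = 1/\sin\theta$, which is exactly the asserted relation $\cosh r = 1/\sin\theta$ in this case.

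The general case then follows by equivariance: given arbitrary $d, d'$, pick $N \in GL_2^+(\RR)$ sending the pair $([0:1],[1:0])$ to $(d,d')$ as directions; since $\Psi$ intertwines the $GL_2^+(\RR)$-action on $\RR P^1$ with the $\PSL$-action on $\partial\HH$ (this is the content of the identification $\Psi$, and a Möbius transformation is an isometry of $\HH$ preserving hyperbolic distance to geodesics), and since $\chi$ is a right action so that $\chi((M\cdot S_0)$ with directions measured against $N$) corresponds to the appropriate translate of $z$ by the isometry associated to $N$, the distance $d_{\mathrm{hyp}}(z,\gamma_{d,d'})$ equals the distance computed in the normalized case, and $\sin\theta$ is likewise unchanged because the wedge-product ratio defining $\sin\theta$ only depends on the pair of directions and the conformal class, not on $N$. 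The statement about orthogonality is the special case $\theta = \pi/2$, where $\cosh r = 1$ forces $r = 0$, i.e. the locus is $\gamma_{d,d'}$ itself.

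The main obstacle I anticipate is purely bookkeeping: getting the left-versus-right action conventions consistent (the footnote already flags this — quotient by $SO_2(\RR)$ on the left, act by $GL_2^+(\RR)$ on the right), and making sure the sign conventions in $\Psi(d) = -x/y$ and in $\chi(M) = (di+b)/(ci+a)$ line up so that $\chi$ genuinely intertwines the actions; once the base case is pinned down correctly, the identity $(ab+cd)^2 + (ad-bc)^2 = (a^2+c^2)(b^2+d^2)$ (a Lagrange/Brahmagupta–Fibonacci identity) does all the work, and the restriction $\theta \in ]0,\pi/2]$ is exactly what is needed for $\cosh r \geq 1$ to have a solution $r \geq 0$.
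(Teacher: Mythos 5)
Your proposal is correct and follows essentially the same strategy as the paper: normalize to the pair of directions corresponding to $0$ and $\infty$, verify the identity $\cosh d_{\mathrm{hyp}}(\chi(M\cdot S_0),\gamma_{0,\infty})=1/\sin\theta$ there, and transport the result to arbitrary $(d,d')$ by the right $GL_2^+(\RR)$-equivariance of the construction. The only (harmless) difference is in the base case: you compute $z=\chi(M\cdot S_0)$ explicitly and use the Lagrange identity $(ab+cd)^2+(ad-bc)^2=(a^2+c^2)(b^2+d^2)$ together with $\cosh d_{\mathrm{hyp}}(z,\gamma_{0,\infty})=|z|/\Im z$, whereas the paper exploits invariance under $z\mapsto\lambda z$ to reduce to the slice $\{y=1\}$ and obtains $\cosh r=1/\sin\theta$ by integrating $dt/\sin t$ along a circular arc.
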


\begin{Nota}\label{nota:angle}
In the rest of the paper, we denote by $\theta(X,d,d')$ the angle between the directions $d$ and $d'$ in the surface $X = M \cdot S_0$. Notice that Proposition \ref{prop:banane} gives
$$
\cosh d_{hyp}(X, \gamma_{d,d'}) = \frac{1}{\sin \theta(X,d,d')}.
$$
\end{Nota}

\begin{proof}[Proof of Proposition~\ref{prop:banane}]
For $\theta \in ]0, \frac{\pi}{2}]$ and $u,v\in \RR^2$ with equivalence classes $d\neq d'$ we define
\[
\mathcal{M}(d,d',\theta) = \{ M \in GL_2^+ (\RR): \mbox{min}( \mbox{angle}(\pm Mu, \pm Mv),  \mbox{angle}(\pm Mv,\pm Mu)) = \theta \}.
\]
Observe that $\mathcal{M}(d,d',\theta)$ is well defined (it only depends on the equivalence classes $d,d'$ because the angles are taken modulo $\pi$) and equivariant by right multiplication: if $G \in GL_2^+ (\RR)$ then
\begin{equation}
\label{eq:invariance}
\mathcal{M}(d,d',\theta).G=\mathcal{M}(G^{-1}d,G^{-1}d',\theta).
\end{equation}
Denote $\bar{\mathcal{M}}(d,d',\theta)$ the projection of $\mathcal{M}(d,d',\theta)$ to $\Hyp^2$. Observe that
$\mathcal{M}(d,d',\theta)$ is invariant by left multiplication by $SO_2(\RR)$, so any matrix in $GL_2^+(\RR)$ that projects to an element of $\bar{\mathcal{M}}(d,d',\theta)$, is actually in $\mathcal{M}(d,d',\theta)$.\newline

Let us look at the case $d=\bar{\left(\begin{smallmatrix}1 \\ 0 \end{smallmatrix}\right)}=\infty$ and $d'=\bar{\left(\begin{smallmatrix}0 \\ 1 \end{smallmatrix}\right)}=0$. Observe that in that case $\bar{\mathcal{M}}(d,d',\theta)$ is invariant by $z \mapsto \lambda z$, for any $\lambda >0$. Indeed, take $\lambda >0$ and 
$z \in \bar{\mathcal{M}}(d,d',\theta)$, and let
\[
M=
\left(
\begin{array}{cc}
a & b \\
c & d
\end{array}
\right)
\]
be an element of $\mathcal{M}(d,d',\theta) \subset GL_2^+(\RR)$ which projects to $z$. Then the matrix 
\[
M'=
\left(
\begin{array}{cc}
a & b \\
c & d
\end{array}
\right)
\left(
\begin{array}{cc}
1 & 0 \\
0 & \lambda
\end{array}
\right)
\in GL_2^+(\RR)
\]
projects to $\lambda z$. But the equivalence class, in $\RR P^1$, of $\left(\begin{smallmatrix} 1 & 0 \\ 0 & \lambda \end{smallmatrix}\right) \left(\begin{smallmatrix}1 \\ 0 \end{smallmatrix}\right)$ (resp. $\left(\begin{smallmatrix}1 & 0 \\ 0 & \lambda \end{smallmatrix}\right) \left(\begin{smallmatrix}0 \\ 1 \end{smallmatrix}\right)$), is $d$ (resp. $d'$), and we have seen that $\mathcal{M}(d,d',\theta)$ only depends on the equivalence classes $d,d'$, so $M \in \mathcal{M}(d,d',\theta)$ entails $M' \in \mathcal{M}(d,d',\theta)$. Therefore $\lambda z \in \bar{\mathcal{M}}(d,d',\theta)$.

\begin{figure}[h]
\hskip -55mm
\begin{minipage}[l]{0.1\linewidth}
\begin{tikzpicture}[line cap=round,line join=round,>=triangle 45,x=1cm,y=1cm, scale=1.25]
\clip(-3,0) rectangle (2.6,4);
\draw [line width=2pt,color=gray,domain=-3.0190928292046935:2.895707170795306] plot(\x,{(-0-0*\x)/4});
\draw [shift={(0,0)},line width=1pt,color=black]  plot[domain=0:3.141592653589793,variable=\t]({1*2*cos(\t r)+0*2*sin(\t r)},{0*2*cos(\t r)+1*2*sin(\t r)});

\draw [->,line width=1pt,color=red] (0,0) -- (1,1);
\draw [line width=1pt,color=red] (0,0) -- (5,5);

\draw [->,line width=1pt,color=black] (0,0) -- (0,1);
\draw [line width=1pt,color=black] (0,0) -- (0,5);

\draw [->,line width=1pt,color=red] (0,0) -- (-1,1);
\draw [line width=1pt,color=red] (0,0) -- (-5,5);

\draw (-2.001161147327249,0) node[anchor=north west] {$\bar{u}$};
\draw (1.9857379400260755,0) node[anchor=north west] {$\bar{v}$};
\draw (0,3) node[anchor=north west] {$\gamma_{d,d'}=\gamma_{0,\infty}$};
\draw (-2.4,3) node[anchor=north west] {$\gamma_{d,d',r}$};
\draw (1,2.5) node[anchor=north west] {$\gamma_{d,d',r}$};

\draw (1,2.5) node[anchor=north west] {$\gamma_{d,d',r}$};

\node[draw,circle,inner sep=1.5pt,fill] at (0,2) {};
\node[draw,circle,inner sep=1.5pt,fill] at (1.4,1.4) {};
\draw[left] (0,2.2) node {\begin{scriptsize} $i$ \end{scriptsize}};
\draw[right] (1.4,1.4) node {\begin{scriptsize} $z=e^{i\theta}$ \end{scriptsize}};

\draw [shift={(0,0)},line width=1pt,fill=red,fill opacity=0.10000000149011612] (0,0) -- (45.13010235415601:0.23134810951760104) arc (45.13010235415601:0:0.23134810951760104) -- cycle;

\draw [shift={(0,0)},line width=1pt,fill=red,fill opacity=0.10000000149011612] (0,0) -- (-45.13010235415601:-0.23134810951760104) arc (-45.13010235415601:0:-0.23134810951760104) -- cycle;

\begin{scriptsize}
\draw[color=black] (-0.3,0.15) node {$\theta$};
\draw[color=black] (0.3,0.15) node {$\theta$};
\end{scriptsize}
\end{tikzpicture}
\end{minipage}
\hskip 60mm
\begin{minipage}[l]{0.1\linewidth}
\begin{tikzpicture}[line cap=round,line join=round,>=triangle 45,x=1cm,y=1cm, scale=1.25]
\clip(-3,0) rectangle (2.6,4);
\draw [shift={(0,0)},line width=1pt,color=black]  plot[domain=0:3.141592653589793,variable=\t]({1*2*cos(\t r)+0*2*sin(\t r)},{0*2*cos(\t r)+1*2*sin(\t r)});
\draw [shift={(0,1.5)},line width=1pt,color=red]  plot[domain=-0.6435011087932843:3.7850937623830774,variable=\t]({1*2.5*cos(\t r)+0*2.5*sin(\t r)},{0*2.5*cos(\t r)+1*2.5*sin(\t r)});
\draw [shift={(0,-1.5)},line width=1pt,color=red]  plot[domain=0.6435011087932844:2.498091544796509,variable=\t]({1*2.5*cos(\t r)+0*2.5*sin(\t r)},{0*2.5*cos(\t r)+1*2.5*sin(\t r)});
\draw [->,line width=1pt,color=red] (-2,0) -- (-1.4084909621903532,0.7886787170795292);
\draw [->,line width=1pt,color=black] (-2,0) -- (-2,1);
\draw [->,line width=1pt,color=red] (-2,0) -- (-2.6164052672750966,0.8218736897001288);
\draw [line width=2pt,color=gray,domain=-3.0190928292046935:2.895707170795306] plot(\x,{(-0-0*\x)/4});
\draw (-2.001161147327249,0) node[anchor=north west] {$\bar{u}$};
\draw (1.9857379400260755,0) node[anchor=north west] {$\bar{v}$};
\draw (-0.4973984354628422,2.5) node[anchor=north west] {$\gamma_{d,d'}$};
\draw (-0.3971475880052151,1.0268959582790067) node[anchor=north west] {$\gamma_{d,d',r}$};
\draw (-0.28918513689700126,3.9727285528031264) node[anchor=north west] {$\gamma_{d,d',r}$};

\draw [shift={(-2,0)},line width=1pt,fill=red,fill opacity=0.10000000149011612] (0,0) -- (53.13010235415601:0.23134810951760104) arc (53.13010235415601:0:0.23134810951760104) -- cycle;

\draw [shift={(-2,0)},line width=1pt,fill=red,fill opacity=0.10000000149011612] (0,0) -- (-53.13010235415601:-0.23134810951760104) arc (-53.13010235415601:0:-0.23134810951760104) -- cycle;

\begin{scriptsize}
\draw[color=black] (-2.3,0.15) node {$\theta$};
\draw[color=black] (-1.7,0.15) node {$\theta$};
\end{scriptsize}
\end{tikzpicture}
\end{minipage}
\caption{The set $\gamma_{d,d',r}$ for $\cosh r = \frac{1}{\sin \theta}$.}
\label{fig:translation}
\end{figure}
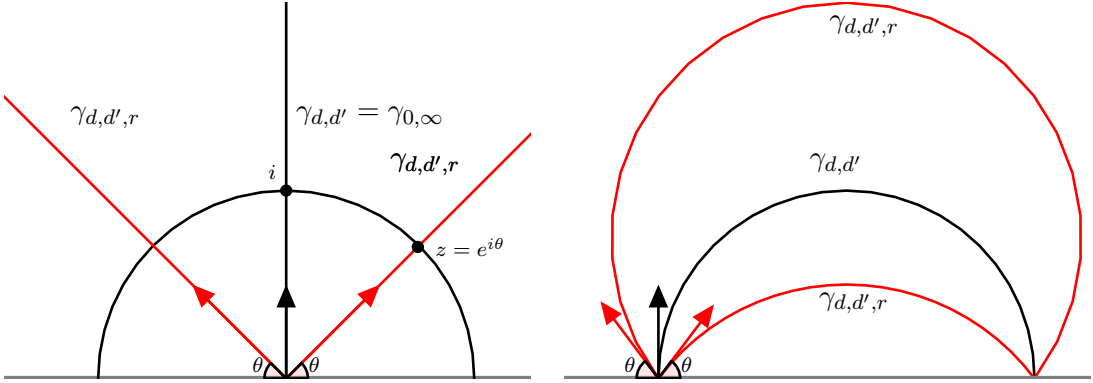
Thus, to determine $\bar{\mathcal{M}}(d,d',\theta) $, it suffices to determine its intersection with the horizontal straight line $\{y=1\}$, which we parametrize as
\[
\Big\{ i + \cot \alpha: \alpha \in \left]0,\pi\right[ \Big\}
\]
A corresponding set of matrices in  $GL_2^+(\RR)$ is given by
\[
\Big\{
\left(
\begin{array}{cc}
1  & \cot \alpha \\
0 & 1
\end{array}
\right)
: \alpha \in  \left] 0,\pi\right[ 
\Big\}
\]
which sends $\left(\begin{smallmatrix}1 \\ 0 \end{smallmatrix}\right)$ and $\left(\begin{smallmatrix}0 \\ 1 \end{smallmatrix}\right)$ to, respectively, $\left(\begin{smallmatrix}1 \\ 0 \end{smallmatrix}\right)$ and $\left(\begin{smallmatrix}\cot \alpha \\ 1 \end{smallmatrix}\right)$. The angle of the latter vectors is $\alpha$, so $\bar{\mathcal{M}}(d,d',\theta) \cap \{y=1\} = \{ \left(\begin{smallmatrix}\cot \theta \\ 1 \end{smallmatrix}\right) \}$. Therefore,  
$\bar{\mathcal{M}}(d,d',\theta)$ is the half-line which starts at the origin, with co-slope $\cot \theta$. This is precisely $\gamma_{d,d',r}$ with $\cosh r = \frac{1}{\sin \theta}$, since the hyperbolic distance from $z=e^{i\theta}$ to the geodesic $\gamma_{0,\infty}$ is realized by the geodesic $\eta$  parameterized by $\eta(t)=e^{it}$ for $t\in [\theta,\pi/2]$, so that by definition
$$
r=\mathrm{d}_{hyp}(z,\gamma_{d,d'}) = l_{\mathrm hyp}(\eta)=\int_\theta^{\pi/2} \frac{dt}{\sin t} = \frac1{2} \log \cfrac{1+\cos \theta}{1-\cos \theta},
$$
thus $\cos \theta = \frac{e^{2r}-1}{e^{2r}+1}$, and
$$
\sin \theta = \sqrt{1-\cos^2 \theta} = \sqrt{1-\left(\frac{e^{2r}-1}{e^{2r}+1}\right)^2}= \cfrac{2e^r}{e^{2r}+1} = \frac{1}{\cosh r}.
$$

Now let us consider the general case. Pick $G\in GL_2^+(\RR)$ taking directions $d,d'$ to 
$\bar{\left(\begin{smallmatrix}1 \\ 0 \end{smallmatrix}\right)}=\infty$ and $\bar{\left(\begin{smallmatrix}0 \\ 1 \end{smallmatrix}\right)}=0$ respectively. Then, by Equation~\ref{eq:invariance}, 
$\mathcal{M}(\infty,0,\theta).G=\mathcal{M}(d,d',\theta)$, so $\bar{\mathcal{M}}(d,d',\theta)$ is the image of $\bar{\mathcal{M}}(\infty,0,\theta)$ by the orientation-preserving isometry of $\Hyp^2$ corresponding to the action of $G$. One verifies that this isometry sends $\infty$ and $0$ to the images of the directions $d$ and $d'$ by the identification $\RR P^1 \simeq \partial \HH$ via the opposite of the co-slope, respectively. This finishes the proof.
\end{proof}

\section{Another look at $\KVol$}\label{sec:another look}

Recall that the function $\KVol$ can be expressed as a supremum over saddle connections $\alpha,\beta$ in~\eqref{eq:KVol}. We will use the invariance of $\mathrm{Int}(\cdot,\cdot)$ for the affine action of $\mathrm{SL}_2(\RR)$ on translation surfaces and the invariance of $\wedge$ for the linear action of $\mathrm{SL}_2(\RR)$ on $\RR^2$ in order to have a more suitable formula to work with. In the sequel we will use the notation $K(X)$:
$$
\KVol(X) = \Vol(X)\cdot K(X).
$$
\begin{Prop}
\label{prop:other:def}
Let $\mathcal{P}$ be the set of periodic directions in $X=M\cdot S_0$ for some $M\in \mathrm{SL}_2 (\RR)$. Then
$$
K(X) = \sup_{
 \begin{scriptsize}
 \begin{array}{c}
d, d' \in \mathcal{P} \\
d \neq d'
\end{array}
\end{scriptsize}} K(d,d')\cdot \sin \theta(X,d,d'),
$$
where
$K(d,d') = \sup_{
 \begin{scriptsize}
 \begin{array}{c}
\alpha\subset S_0\ {\mathrm in\ direction\ } d \\
\beta\subset S_0\ {\mathrm in\ direction\ } d'
\end{array}
\end{scriptsize}
}
\frac{\mathrm{Int} (\alpha,\beta)}{\alpha\wedge \beta}
$ and $\theta(X,d,d')$ is the angle given in Notation~\ref{nota:angle}.
\end{Prop}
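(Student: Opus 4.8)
The plan is to unwind the definition of $K(X)$ from~\eqref{eq:KVol} (recall $\KVol(X) = \Vol(X)\cdot K(X)$, so $K(X) = \sup_{\alpha,\beta} \frac{\Int(\alpha,\beta)}{l(\alpha)l(\beta)}$) using the two invariance properties: $\Int(\cdot,\cdot)$ is invariant under the affine action of $\mathrm{SL}_2(\RR)$, and $\wedge$ is invariant under the linear action on $\RR^2$. First I would recall that since $X \in \mathcal H(2m-2)$ is a Veech surface lying in a minimal stratum, every geodesic is homologous to a union of closed saddle connections, all parallel to a periodic direction; hence the supremum defining $K(X)$ may be taken over saddle connections $\alpha,\beta$ whose directions lie in the set $\mathcal P$ of periodic directions of $X$ (this is exactly the reduction mentioned in \S\ref{sec:prem}). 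So $K(X) = \sup_{d\neq d' \in \mathcal P} \sup_{\alpha\ \text{in dir}\ d,\ \beta\ \text{in dir}\ d'} \frac{\Int(\alpha,\beta)}{l(\alpha)l(\beta)}$, where one checks separately that taking $d = d'$ contributes nothing: parallel saddle connections on a surface with no parallel saddle connections intersecting nontrivially (which holds here, e.g.\ by the forthcoming computation, or can be assumed) have zero algebraic intersection, so these terms do not affect the supremum.

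Next, fix $X = M\cdot S_0$ with $M\in\mathrm{SL}_2(\RR)$, and fix two distinct directions $d,d'\in\mathcal P$. For $\alpha\subset X$ in direction $d$ and $\beta\subset X$ in direction $d'$, write $\alpha = M\cdot\alpha_0$ and $\beta = M\cdot\beta_0$ where $\alpha_0,\beta_0\subset S_0$ have directions $d,d'$ in the plane template of $S_0$ (in the sense of the Definition preceding Proposition~\ref{prop:banane}). By affine invariance of the intersection form, $\Int(\alpha,\beta) = \Int(\alpha_0,\beta_0)$. For the denominator, write $l(\alpha)l(\beta) = \dfrac{\overrightarrow{\alpha}\wedge\overrightarrow{\beta}}{\sin\theta(X,d,d')}$, where $\theta(X,d,d')$ is the angle between the directions $d$ and $d'$ in $X$ as in Notation~\ref{nota:angle}; and by $\mathrm{SL}_2(\RR)$-invariance of the wedge product, $\overrightarrow{\alpha}\wedge\overrightarrow{\beta} = \overrightarrow{\alpha_0}\wedge\overrightarrow{\beta_0} = \alpha_0\wedge\beta_0$. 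Therefore
\[
\frac{\Int(\alpha,\beta)}{l(\alpha)l(\beta)} = \frac{\Int(\alpha_0,\beta_0)}{\alpha_0\wedge\beta_0}\cdot\sin\theta(X,d,d').
\]
The right-hand side depends on $\alpha,\beta$ only through $\alpha_0,\beta_0$, i.e.\ through saddle connections in $S_0$ of directions $d,d'$; taking the supremum over all such $\alpha_0,\beta_0$ gives $K(d,d')\cdot\sin\theta(X,d,d')$, and then the supremum over $d\neq d'\in\mathcal P$ yields the claimed formula.

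The one point that deserves genuine care — and which I expect to be the main (minor) obstacle — is the bookkeeping around the \emph{set of directions}: one must make sure that the map $M$ identifies periodic directions of $X$ with periodic directions of $S_0$ (equivalently that $\mathcal P$, defined as periodic directions of $X = M\cdot S_0$, is $M$ applied to the periodic directions of $S_0$), and that the "direction $d$ in $X$" used inside $\theta(X,d,d')$ is consistent with the "direction $d$ in $S_0$" used inside $K(d,d')$. This is precisely the content of the Definition stated just before Proposition~\ref{prop:banane}, so it is a matter of citing that definition and noting compatibility; no real difficulty, but it is the place where the apparent "counter-intuitiveness" flagged in the text must be handled cleanly. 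Everything else is a direct substitution using the two invariance statements, together with the standard identity $l(\alpha)l(\beta)\sin\theta = \overrightarrow{\alpha}\wedge\overrightarrow{\beta}$ recalled at the start of \S\ref{sec:SL2R}.
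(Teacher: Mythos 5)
Your proposal is correct and follows essentially the same route as the paper's own proof: reduce the supremum to pairs of saddle connections in distinct periodic directions, substitute $l(\alpha)l(\beta)=\overrightarrow{\alpha}\wedge\overrightarrow{\beta}/\sin\theta(X,d,d')$, and use the $\mathrm{SL}_2(\RR)$-invariance of $\mathrm{Int}$ and $\wedge$ to pull everything back to $S_0$, with the identification $\theta=\theta(X,d,d')$ supplied by Proposition~\ref{prop:banane}. The points you flag (discarding $d=d'$ via vanishing intersection of parallel saddle connections, and the consistency of the direction conventions between $X$ and $S_0$) are exactly the ones the paper handles, and in the same way.
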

Observe that the quantity $K(d,d')$ is invariant under the diagonal action of the Veech group~$\Gamma$. Moreover $\sin \theta(X,d,d') = 1/\cosh r$, where $r$ is the hyperbolic distance between $X$ and the geodesic $\gamma_{d,d'}$, by Proposition \ref{prop:banane}.
\begin{proof}[Proof of Proposition~\ref{prop:other:def}]
Given two saddle connections $\alpha,\beta \subset X$ having directions $d,d'$ (in $X$) and making an angle $\theta$, one has $\alpha \wedge \beta=l(\alpha)l(\beta)\sin \theta$. Notice that parallel saddle connections do not intersect in $X$, so we will assume $d \neq d'$. By definition these saddle connections are the images by $M$ of saddle connections $\alpha',\beta' \subset S_0$ having directions $d, d'$ (in $S_0$), and thus $M\in \mathcal{M}(d,d',\theta)$. In particular the projection of $M$ to $\mathbb H^2$, that is $X\in \bar{\mathcal{M}}(d,d',\theta)$, gives $\theta = \theta(X,d,d')$. Now, by definition (see~\eqref{eq:KVol}),
\begin{multline*}
\sup_{\alpha,\beta}
 \frac{\Int (\alpha,\beta)}{l (\alpha) l (\beta)} = 
 \sup_{ \begin{scriptsize}
 \begin{array}{c}
d, d' \in \mathcal{P} \\
d \neq d'
\end{array}
\end{scriptsize}} 
\sup_{
 \begin{scriptsize}
 \begin{array}{c}
\alpha\subset X\ {\mathrm in\ direction\ } d \\
\beta\subset X\ {\mathrm in\ direction\ } d'
\end{array}
\end{scriptsize}
}
\frac{\mathrm{Int} (\alpha,\beta)}{l (\alpha) l (\beta)} \\
= \sup_{
  \begin{scriptsize}
 \begin{array}{c}
d, d' \in \mathcal{P} \\
d \neq d'
\end{array}
\end{scriptsize}
} 
\sup_{
 \begin{scriptsize}
 \begin{array}{c}
\alpha\subset X\ {\mathrm in\ direction\ } d \\
\beta\subset X\ {\mathrm in\ direction\ } d'
\end{array}
\end{scriptsize}
}
\frac{\mathrm{Int} (\alpha,\beta)}{\alpha \wedge \beta}\cdot \sin \mbox{angle}(\alpha,\beta) \\
= \sup_{ \begin{scriptsize}
 \begin{array}{c}
d, d' \in \mathcal{P} \\
d \neq d'
\end{array}
\end{scriptsize}} \sup_{
 \begin{scriptsize}
 \begin{array}{c}
M^{-1}\alpha\subset S_0\ {\mathrm in\ direction\ } d \\
M^{-1}\beta\subset S_0\ {\mathrm in\ direction\ } d'
\end{array}
\end{scriptsize}
}
\frac{\mathrm{Int} (M^{-1}\alpha,M^{-1}\beta)}{M^{-1}\alpha\wedge M^{-1}\beta} \cdot \sin \theta(X,d,d') \\
= \sup_{ \begin{scriptsize}
 \begin{array}{c}
d, d' \in \mathcal{P} \\
d \neq d'
\end{array}
\end{scriptsize}} K(d,d') \cdot \sin \theta(X,d,d')
\end{multline*}
as desired.
\end{proof}
We end this section with the following computation of $K(0,\infty)$ and $K(0,\Phi)$, for later use.
\begin{Prop}\label{Examples_K}
The following hold:
\begin{enumerate}[label=(\roman*)]
\item $K(0,\infty)= \frac{1}{l(\alpha_m)^2}$ and $K(0,\Phi) = \frac{1}{\Phi}\cdot K(0,\infty)$.
\item $\forall (d,d') \notin \Gamma_n \cdot (0,\infty)$, $K(d,d')\leq K(0,\Phi)$.
\end{enumerate}
\end{Prop}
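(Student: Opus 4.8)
## Proof Plan for Proposition \ref{Examples_K}

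The plan is to reduce everything to explicit computations in the staircase model $S_0$, in which the direction $0$ is the vertical direction and $\infty$ the horizontal one. Since in $\mathcal H(2m-2)$ a saddle connection in a periodic direction is a boundary segment of the corresponding cylinder decomposition, the horizontal saddle connections of $S_0$ are exactly $\alpha_1,\dots,\alpha_m$ and the vertical ones exactly $\beta_1,\dots,\beta_m$, with lengths $\sin\frac{2k\pi}{n}$ by \eqref{eq:lengths}. Because $0\perp\infty$ in $S_0$ we have $\alpha\wedge\beta=l(\alpha)l(\beta)$, so $K(0,\infty)=\max_{i,j}\frac{|\mathrm{Int}(\beta_i,\alpha_j)|}{l(\beta_i)l(\alpha_j)}$, a finite maximum. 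First I would read off the intersection matrix $\bigl(\mathrm{Int}(\beta_i,\alpha_j)\bigr)_{i,j}$ from the staircase picture of Figure~\ref{heptagon}; the crucial structural fact to extract is that these intersection numbers are small enough that the ratio is never improved by trading the minimal-length pair for a longer pair with more intersections. Together with the observation that the shortest saddle connections are $\alpha_m,\beta_m$ with $l(\alpha_m)=l(\beta_m)=\sin\frac\pi n$ and $|\mathrm{Int}(\beta_m,\alpha_m)|=1$, this yields $K(0,\infty)=1/l(\alpha_m)^2$.

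For $K(0,\Phi)$, the point is that $\Phi=T^{-1}\cdot 0$ at the level of $\partial\HH$, where $T\in\Gamma_n$ is the parabolic generator of the Veech group. Hence the affine automorphism $\varphi$ of $S_0$ with derivative $T^{-1}$ — a product of Dehn twists along the horizontal core curves $\alpha_{k-1}+\alpha_k$ — carries the vertical saddle connections bijectively onto the saddle connections of direction $\Phi$, and preserves $\mathrm{Int}$. A direct holonomy computation gives $\alpha\wedge\varphi(\beta)=\Phi\, l(\alpha)l(\beta)$ for vertical $\alpha,\beta$, so that $K(0,\Phi)=\frac1\Phi\max_{i,j}\frac{|\mathrm{Int}(\beta_i,\varphi(\beta_j))|}{l(\beta_i)l(\beta_j)}$; expanding $\varphi(\beta_j)$ in homology in terms of the core curves rewrites $\mathrm{Int}(\beta_i,\varphi(\beta_j))$ entirely in terms of the matrix already computed for $K(0,\infty)$, and checking that the resulting maximum equals $K(0,\infty)$ gives $K(0,\Phi)=K(0,\infty)/\Phi$.

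For assertion (ii) I would first exploit that $K(\cdot,\cdot)$ is invariant under the diagonal action of $\Gamma_n$ and under the mirror symmetry $x\mapsto -x$ of $S_0$, and that the periodic directions form a single $\Gamma_n$-orbit; this reduces the statement to the case $d=\infty$ and $d'$ a periodic direction in $\bigl(-\tfrac\Phi2,\tfrac\Phi2\bigr)$, the hypothesis $(d,d')\notin\Gamma_n\cdot(0,\infty)$ becoming precisely $d'\neq 0$. Writing $\alpha=\alpha_i$ and $\beta$ of direction $d'$, one has $\alpha\wedge\beta=l(\alpha_i)\cdot\mathrm{ht}(\beta)$ where $\mathrm{ht}(\beta)$ is the vertical span of $\beta$, and the cusp identity \eqref{cusp_identity_double} applied to the horizontal direction expresses $\mathrm{ht}(\beta)$ as $\sum_k l(\beta_{m-k+1})\,\mathrm{Int}(\alpha_{k-1}+\alpha_k,\beta)$. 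Combining this with a bound on $\mathrm{Int}(\alpha_i,\beta)$ in terms of how often $\beta$ traverses the horizontal cylinders adjacent to $\alpha_i$, and with the no-small-triangles constant of $S_0$, I would establish $\frac{\mathrm{Int}(\alpha_i,\beta)}{\alpha_i\wedge\beta}\le\frac1{\Phi\, l(\alpha_m)^2}$ for all such $d'$, with the bound saturated along $d'=\pm1/\Phi$ — which, since $RT^{-1}\in\Gamma_n$ sends $(0,\Phi)$ to $(1/\Phi,\infty)$, is exactly the reduced form of the pair $(0,\Phi)$.

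The main obstacle is this last uniform estimate. In contrast to $K(0,\infty)$ and $K(0,\Phi)$, which are genuinely finite computations in the fixed surface $S_0$, the inequality $K(\infty,d')\le K(0,\Phi)$ has to hold for the dense set of periodic directions $d'\in\bigl(-\tfrac\Phi2,\tfrac\Phi2\bigr)\setminus\{0\}$, so it cannot be verified case by case: it requires controlling the combinatorics of saddle connections in an \emph{arbitrary} periodic direction, presumably by tracking the Hecke continued fraction expansion of $d'$ and carefully bookkeeping intersections across the $m$ horizontal cylinders, and showing that a direction not adjacent to $\infty$ forces the relevant virtual triangles to have area at least $\Phi\,l(\alpha_m)^2$. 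I expect the bulk of the work, and the delicate inequalities, to lie there.
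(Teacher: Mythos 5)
Your part (i) is essentially on track. For $K(0,\infty)$ you follow the same route as the paper: reduce to the finitely many horizontal/vertical saddle connections, observe that all the intersection numbers are $0$ or $\pm 1$, and note that the shortest pair $(\alpha_m,\beta_m)$ intersects. (One small inaccuracy: $S_0$ has $2m-1$ horizontal saddle connections, not $m$; but each is homologous to some $\alpha_i$ and has length $l(\alpha_i)$, so nothing breaks.) For $K(0,\Phi)$ your route through the affine multitwist $\varphi$ with derivative $T^{\pm 1}$ is a genuine alternative to the paper, which instead just exhibits the optimal pair $\alpha=\beta_m$, $\beta=\alpha_1+\beta_m$ (the diagonal of the shortest cylinder) and computes $\alpha\wedge\beta=l(\alpha_1)l(\beta_m)=\Phi\, l(\alpha_m)^2$. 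Your version does work — $\mathrm{Int}(\beta_i,\varphi(\beta_j))=\sum_k \mathrm{Int}(\beta_i,c_k)\mathrm{Int}(\beta_j,c_k)$ and each $\beta_i$ traverses exactly one horizontal cylinder, so this matrix is essentially the identity — but as written "checking that the resulting maximum equals $K(0,\infty)$" is precisely the content of the claim and is left unverified.

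The genuine gap is in (ii). Your reduction to $d=\infty$ and $d'$ in a fundamental interval punctured at $0$ is correct and matches the paper, and the identity $\alpha_i\wedge\beta=l(\alpha_i)\cdot l(\beta)\sin\theta$ is the right starting point. But the uniform estimate $\frac{\mathrm{Int}(\alpha_i,\beta)}{\alpha_i\wedge\beta}\le\frac{1}{\Phi\, l(\alpha_m)^2}$ is exactly the statement to be proved, and you give no argument for it: you defer it to a prospective analysis via Hecke continued fractions, bookkeeping across the $m$ cylinders, and the no-small-triangles constant, and you yourself flag it as the delicate bulk of the work. That is the missing step, and the anticipated machinery is not what is needed (the Vorobets constant enters the paper only in Proposition~\ref{prop:veech:bounded}, a different statement). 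The observation you are missing is short: since $\beta$ is neither horizontal nor vertical, between consecutive crossings of a fixed $\alpha_i$ the curve $\beta$ must fully traverse a horizontal cylinder adjacent to $\alpha_i$, and from the staircase data every $\alpha_i$ bounds a cylinder of height at least $l(\beta_1)=l(\alpha_1)$ (the adjacent heights are $l(\beta_{m-i+1})$ and $l(\beta_{m-i})$, and $\sin\frac{2j\pi}{n}\ge\sin\frac{2\pi}{n}$ for $1\le j\le m-1$). Hence $l(\beta)\sin\theta(S_0,\infty,d')\ge l(\alpha_1)\,\mathrm{Int}(\alpha_i,\beta)$, and dividing by $\alpha_i\wedge\beta$ and using $l(\alpha_i)\ge l(\alpha_m)$ gives $\frac{\mathrm{Int}(\alpha_i,\beta)}{\alpha_i\wedge\beta}\le\frac{1}{l(\alpha_1)l(\alpha_m)}=\frac{1}{\Phi}K(0,\infty)$ uniformly in $d'$, with no case analysis over periodic directions. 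Without some version of this cylinder-height argument, your plan for (ii) does not constitute a proof.
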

\begin{proof}[Proof of Proposition~\ref{Examples_K}]
$(i)$ We use the notations of Figure \ref{heptagon}. The directions $d=0$ and $d'=\infty$ correspond to the vertical and the horizontal, respectively.
By definition
$$K(0,\infty) = \sup_{
 \begin{scriptsize}
 \begin{array}{c}
\alpha\subset S_0\ {\mathrm horizontal } \\
\beta\subset S_0\ {\mathrm vertical }
\end{array}
\end{scriptsize}
}
\frac{\mathrm{Int} (\alpha,\beta)}{l(\alpha)l(\beta)} =\sup_{i,j=1,\dots,m} \frac{\mathrm{Int} (\alpha_i,\beta_j)}{l(\alpha_i)l(\beta_j)}
$$
A quick look at the intersections shows that $\mathrm{Int} (\alpha_i,\beta_j) \in \{0,\pm 1\}$.  Moreover, $l(\alpha_k)=l(\beta_k)=\sin 2k\pi/(2m+1) \geq \sin \pi/(2m+1)$ and equality is realized for $k=m$. Since $\mathrm{Int} (\alpha_m,\beta_m) \neq 0$, we draw $K(0,\infty)=\cfrac{1}{l(\alpha_m)l(\beta_m)}=\cfrac{1}{l(\alpha_m)^2}$.\newline

The discussion for the directions $d=0$ and $d'=\Phi$ is similar. They correspond to the vertical and the direction of the diagonal of horizontal cylinders. It is clear that 
$$K(0,\Phi) = \sup_{
 \begin{scriptsize}
 \begin{array}{c}
\alpha\subset S_0\ {\mathrm vertical } \\
\beta\subset S_0\ {\mathrm diagonal \text{ } of \text{ } a \text{ } horizontal \text{ } cylinder } \\ 

\end{array}
\end{scriptsize}
}
\frac{\mathrm{Int} (\alpha,\beta)}{l(\alpha)l(\beta)}
$$
is maximal for $\alpha = \beta_m$ and $\beta = \alpha_1 + \beta_m$. And we have $K(0,\Phi) =\frac{\mathrm{Int}(\alpha,\beta)}{\alpha \wedge \beta} = \frac{1}{l(\alpha_1)l(\beta_m)} = \frac{1}{\Phi}K(0, \infty)$.\newline

$(ii)$ Let $(d,d') \notin \Gamma \cdot (0,\infty)$. Since $K(d,d')$ is invariant under the diagonal action of the Veech group and $d$ is a periodic direction, we can assume $d=\infty$, and, up to a horizontal shear, $d' \in ]0,\Phi[$. Notice that given a geodesic $\beta$ in direction $d'$, every intersection with any of the $\alpha_i$ requires a vertical length $l(\alpha_1)$ (this is where we use the fact that $d'$ is not vertical), that is
\begin{equation*}
\forall i \in \{1, \cdots, m \}, \text{ } l(\beta) \sin \theta(S_0,d,d') \geq l(\alpha_1) \mathrm{Int}(\alpha_i,\beta).
\end{equation*}
Hence
\begin{equation*}
\forall i \in \{1, \cdots, m \}, \text{ } \frac{\mathrm{Int} (\alpha_i,\beta)}{l(\alpha_i) l(\beta) \sin \theta(S_0,d,d')} \leq \frac{1}{l(\alpha_1)l(\alpha_i)}  
\end{equation*}
But $l(\alpha_i) l(\beta) \sin \theta(S_0,d,d') = \alpha_i \wedge \beta$, and $l(\alpha_i) \geq l(\alpha_m)$, so that the last equation reduces to
\begin{equation*}
\forall i \in \{1, \cdots, m \}, \text{ } \frac{\mathrm{Int} (\alpha_i,\beta)}{\alpha_i \wedge \beta} \leq \frac{1}{ l(\alpha_1)l(\alpha_m)} = \frac{1}{\Phi}K(0,\infty),
\end{equation*}
where the last equality follows from $(i)$. This concludes the proof of Proposition~\ref{Examples_K}.
\end{proof}

\section{Computation of $\KVol$ for the double $(2m+1)$-gon}\label{sec:2m+1-gon}
In this section we show that $\KVol(X_0)$ is realised by pairs of sides of the double $(2m+1)$-gon:
\begin{Prop}\label{Double2m+1gon}
For every pair of saddle connections $\alpha$ and $\beta$ on $X_0$, we have:
\begin{equation*}
\frac{Int(\alpha,\beta)}{l(\alpha) l(\beta)} \leq \frac{1}{l_0^2}
\end{equation*}
where $l_0$ is the length of the side of the $(2m+1)$-gon.\newline
Moreover, equality is achieved if and only if $\alpha$ and $\beta$ are two distinct sides of the regular $(2m+1)$-gon.
\end{Prop}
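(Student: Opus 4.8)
\emph{Overall strategy.} The plan is to combine this with Proposition~\ref{prop:other:def}: the left-hand side of the inequality, maximised over $\alpha,\beta$, is exactly $K(X_0)=\sup_{d\neq d'}K(d,d')\sin\theta(X_0,d,d')$, so the statement is equivalent to
\[
K(d,d')\,\sin\theta(X_0,d,d')\ \le\ \frac1{l_0^2}\qquad\text{for all periodic directions } d\neq d',
\]
with equality exactly when $d,d'$ are the directions of two distinct sides of the $n$-gon (the supremum defining $K(d,d')$ being then attained by the two sides). To use this I would first locate $X_0$ in $\Hyp^2$: from the description of $\mathcal D$ in Figure~\ref{fundamendal:domain}, $X_0$ is the lower corner $\tfrac{\Phi}{2}+i\sqrt{1-\Phi^2/4}=e^{i\pi/n}$, and by Notation~\ref{nota:angle} the angle $\theta(X_0,d,d')$ is recovered from the single number $d_{\mathrm{hyp}}(X_0,\gamma_{d,d'})$. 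Thus the problem becomes a comparison, for each pair of directions, between the combinatorial invariant $K(d,d')$ and how close the geodesic $\gamma_{d,d'}$ passes to the elliptic point $e^{i\pi/n}$.

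\emph{The equality direction.} The $n$ sides of the double $n$-gon are saddle connections of common length $l_0$, in $n$ pairwise distinct directions equally spaced by $\pi/n$, all passing through the unique cone point (of angle $2\pi(n-2)$). Two distinct sides meet only at that cone point, and analysing the cyclic order of the $2n$ separatrices around it — the separatrix-diagram bookkeeping already used in \S\ref{section:boundedness} — one sees that the two ends of one side always separate the two ends of the other, so their algebraic intersection equals $\pm1$. Hence $\tfrac{\mathrm{Int}(\alpha,\beta)}{l(\alpha)l(\beta)}=\tfrac1{l_0^2}$ for any two distinct sides. I would also record here that the sides are the \emph{unique} systole of $X_0$ (every other saddle connection is strictly longer); this is what eventually gives the ``only if'' part.

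\emph{The upper bound.} I would run a case analysis on the $\Gamma_n$-orbit of the pair $(d,d')$, using that $K(d,d')$ is $\Gamma_n$-invariant, that the supremum of $\sin\theta(X_0,\cdot)$ over a fixed orbit of pairs is $1/\cosh$ of the distance from $X_0$ to the corresponding $\Gamma_n$-orbit of geodesics, and that the order-$n$ elliptic element of $\Gamma_n$ fixing $X_0$ induces an affine automorphism of the double $n$-gon cycling the $n$ side directions. For $(d,d')\in\Gamma_n\cdot(0,\infty)$, Proposition~\ref{Examples_K}(i) gives $K(d,d')=K(0,\infty)=1/(l_0^2\sin\tfrac{\pi}{n})$, while the fundamental domain shows the geodesic of this orbit closest to $X_0$ is a ``vertical'' one, at distance $r_0$ with $\cosh r_0=1/\sin\tfrac{\pi}{n}$, realised by the side-pairs making angle $\tfrac{\pi}{n}$; this settles the case, with equality there. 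For the other orbits the bound $K(d,d')\le K(0,\Phi)$ of Proposition~\ref{Examples_K}(ii) is not enough by itself — the side-pairs making an angle $k\pi/n$ with $k\ge 3$ lie in such orbits, sit \emph{closer} to $X_0$ than $r_0$, and still saturate the target. Instead, fixing $d=\infty$ and decomposing into the $m$ cylinders of that direction (as in the proof of Proposition~\ref{Examples_K}(ii)), I would quantify the transverse length a saddle connection $\beta$ of direction $d'$ must spend for each intersection with a saddle connection $\alpha$ of direction $d$, and combine this with the exact position of $X_0$ relative to $\gamma_{\infty,d'}$, to obtain $K(d,d')\sin\theta(X_0,d,d')\le 1/l_0^2$ — equality forcing $(d,d')$ to be a pair of side directions and $\alpha,\beta$ the two corresponding sides.

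\emph{Main obstacle.} The heart of the proof is this last joint estimate. Since the extremal configurations — pairs of sides at \emph{every} angle $k\pi/n$, $1\le k\le m$ — saturate the inequality while lying in infinitely many distinct $\Gamma_n$-orbits, neither a uniform bound on $K(d,d')$ alone nor on $\sin\theta(X_0,d,d')$ alone can work: the combinatorial invariant and the hyperbolic distance $d_{\mathrm{hyp}}(X_0,\gamma_{d,d'})$ must be controlled \emph{simultaneously}, through a careful analysis of how saddle connections in one direction cross the cylinders of another on the double $n$-gon. Verifying the separatrix-order claim for the sides, and the strictness of the inequality for every non-side pair (needed for the ``if and only if''), are the remaining, more routine, points.
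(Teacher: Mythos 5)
Your proposal reformulates the statement via Proposition~\ref{prop:other:def} as the inequality $K(d,d')\sin\theta(X_0,d,d')\le 1/l_0^2$ for every pair of periodic directions, and then plans an orbit-by-orbit analysis under $\Gamma_n$. The reformulation is correct (and your placement of $X_0$ at $e^{i\pi/n}$ and the identity $K(0,\infty)\sin\frac{\pi}{n}=1/l_0^2$ check out), but the proof has a genuine gap exactly where you locate the ``main obstacle'': for pairs $(d,d')$ outside $\Gamma_n\cdot(0,\infty)$ you correctly observe that Proposition~\ref{Examples_K}(ii) is insufficient, since side-pairs at angle $k\pi/n$, $k\ge 2$, lie in infinitely many distinct orbits and all saturate the bound, so $K(d,d')$ and $\sin\theta(X_0,d,d')$ must be controlled jointly. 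That joint estimate is the entire content of the proposition, and it is only announced (``I would quantify the transverse length\ldots''), not carried out. The cylinder-decomposition bookkeeping you invoke bounds $\mathrm{Int}(\alpha_i,\beta)$ for $\alpha_i$ a \emph{core-direction} saddle connection against a transversal $\beta$, but $K(d,d')$ is a supremum over \emph{all} saddle connections in direction $d$, and turning the cylinder estimate into the sharp constant $1/l_0^2$ (with strictness off the extremal configurations, which you also need for the ``only if'' part) is not a routine step. Likewise, ``the sides are the unique systoles'' does not by itself give strictness for non-side pairs, since $\mathrm{Int}(\alpha,\beta)$ can be large when both curves are long.

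For comparison, the paper does not pass through $K(d,d')$ at all here: it works directly on the flat polygon, partitioning directions into $n$ sectors with transition diagrams, cutting $\alpha$ and $\beta$ at their crossings with non-sandwiched sides into $k$ and $l$ segments, and proving two lemmas --- every segment has length $\ge l_0$ (Lemma~\ref{lenght_segments}) and $\mathrm{Int}(\alpha,\beta)\le kl$ (Lemma~\ref{intersections_segments}, via a finite case analysis of sandwiched-segment types) --- from which the bound and its equality case follow immediately. This sidesteps the infinite orbit analysis entirely; in the paper's architecture the hyperbolic reformulation is used only afterwards (Section~\ref{sec:extension}) to propagate the result from $X_0$ to the whole Teichm\"uller disc, with the present proposition as the base case. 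To salvage your route you would need to supply, for an arbitrary periodic direction pair, a uniform lower bound on $\alpha\wedge\beta$ per intersection that is sharp exactly for pairs of sides --- which in practice amounts to redoing the paper's sector/sandwiching analysis in different clothing.
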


In particular, since the directions $d=0$ and $d'=\infty$ represent sides of the double $(2m+1)$-gon, we deduce the following:

\begin{Cor}\label{Victoire_en _X0}
For $X=X_0$ the double $(2m+1)$-gon, we have:
\begin{equation*}
K(X_0)= K(0,\infty)\cdot \sin \theta(X_0,0,\infty)
\end{equation*}
\end{Cor}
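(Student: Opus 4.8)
The plan is to deduce the statement from Proposition~\ref{Double2m+1gon} together with the reformulation of $K$ given by Proposition~\ref{prop:other:def}. Since $X_0$ lies in the minimal stratum $\mathcal H(2m-2)$, the supremum defining $\KVol$ may be taken over pairs of saddle connections (see \S\ref{sec:prem}), so that $K(X_0)=\sup_{\alpha,\beta\subset X_0}\frac{\mathrm{Int}(\alpha,\beta)}{l(\alpha)l(\beta)}$; by Proposition~\ref{Double2m+1gon} this supremum equals $1/l_0^2$ and is attained \emph{precisely} on pairs of distinct sides of the regular $(2m+1)$-gon. On the other hand, writing $X_0=M_0\cdot S_0$ with $M_0\in\mathrm{SL}_2(\RR)$ (which fixes the normalisation $\Vol(X_0)=\Vol(S_0)$, the one implicit in $K(X_0)$), Proposition~\ref{prop:other:def} gives $K(X_0)=\sup_{d\neq d'}K(d,d')\sin\theta(X_0,d,d')$. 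Taking $(d,d')=(0,\infty)$ — both are periodic directions of the staircase $S_0$ — yields the inequality $K(X_0)\geq K(0,\infty)\sin\theta(X_0,0,\infty)$ for free, so only the reverse inequality requires work.

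The single geometric ingredient I would use is that the directions $0$ and $\infty$ of $S_0$ are represented by sides of the double $(2m+1)$-gon: concretely, that there exist two sides $\alpha,\beta$ of the $(2m+1)$-gon, viewed as saddle connections of $X_0$, such that $M_0^{-1}\alpha$ is vertical and $M_0^{-1}\beta$ is horizontal in the plane template of Figure~\ref{heptagon}. This should be read off from the cut/rearrange/rotate/shear construction of $S_0$ out of $X_0$ depicted in Figure~\ref{heptagon} (equivalently from \cite[\S 5]{Veech}), by keeping track of which edges of the polygon become the horizontal, respectively vertical, sides of the staircase. Granting this, $\alpha$ has direction $0$ and $\beta$ has direction $\infty$ in $X_0$ in the sense of the Definition of \S\ref{sec:SL2R}, and $\alpha\neq\beta$ since they are not parallel.

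The conclusion is then immediate. As $\alpha$ and $\beta$ are distinct sides, Proposition~\ref{Double2m+1gon} gives $\frac{\mathrm{Int}(\alpha,\beta)}{l(\alpha)l(\beta)}=1/l_0^2=K(X_0)$; writing $\theta=\theta(X_0,0,\infty)$ for the angle between $\alpha$ and $\beta$ (Notation~\ref{nota:angle}), and using $l(\alpha)l(\beta)\sin\theta=\alpha\wedge\beta$ together with the $\mathrm{SL}_2(\RR)$-invariance of both $\mathrm{Int}$ and $\wedge$,
\[
K(X_0)=\frac{\mathrm{Int}(\alpha,\beta)}{\alpha\wedge\beta}\,\sin\theta=\frac{\mathrm{Int}(M_0^{-1}\alpha,M_0^{-1}\beta)}{M_0^{-1}\alpha\wedge M_0^{-1}\beta}\,\sin\theta\;\leq\;K(0,\infty)\,\sin\theta(X_0,0,\infty),
\]
the last inequality being the very definition of $K(0,\infty)$ as a supremum over horizontal–vertical pairs of saddle connections in $S_0$. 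Combined with the free inequality above, this gives the claimed equality $K(X_0)=K(0,\infty)\sin\theta(X_0,0,\infty)$.

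I expect essentially all the difficulty to sit in the middle paragraph: the bookkeeping showing that the directions $0$ and $\infty$ are genuinely realised by sides of the $(2m+1)$-gon under the surgeries of Figure~\ref{heptagon}; the rest is a formal rearrangement of formulas already at hand. (One could try to bypass the explicit identification by starting from an optimal pair of sides, whose directions $(d,d')$ necessarily satisfy $K(d,d')\sin\theta(X_0,d,d')=K(X_0)$, and then excluding $(d,d')\notin\Gamma_n\cdot(0,\infty)$ via Proposition~\ref{Examples_K}(ii); but that bound alone does not force a contradiction, so some geometric input of the above kind seems unavoidable.)
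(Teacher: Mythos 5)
Your proof is correct and takes essentially the same route as the paper, which deduces the corollary in one line from Proposition~\ref{Double2m+1gon} together with the observation that the directions $0$ and $\infty$ are represented by sides of the double $(2m+1)$-gon --- precisely the deduction you spell out (the free inequality from Proposition~\ref{prop:other:def}, plus the reverse inequality via a pair of sides in directions $0$ and $\infty$). You correctly identify the polygon-to-staircase bookkeeping as the only genuinely geometric input, and your handling of the normalisation $\Vol(X_0)=\Vol(S_0)$ is, if anything, more careful than the paper's.
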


The main idea of the proof of Proposition \ref{Double2m+1gon} is to subdivide both saddle connections $\alpha$ and $\beta$ into segments of length at least $l_0$ such that each segment of $\alpha$ intersect each segment of $\beta$ at most once. While, strictly speaking,  we do not achieve that, we still get estimates good enough for our purpose, see Lemma  \ref{intersections_segments}. \newline

\subsection{Sectors and transition diagrams.}
Let $\alpha$ and $\beta$ be two saddle connections on the double $(2m+1)$-gon. We partition the set of possible directions into $2m+1$ sectors of angle $\frac{\pi}{2m+1}$, as in Figure \ref{sectors_heptagon} for the double heptagon.\newline

\begin{figure}[h]
\center
\includegraphics[height = 3.5cm]{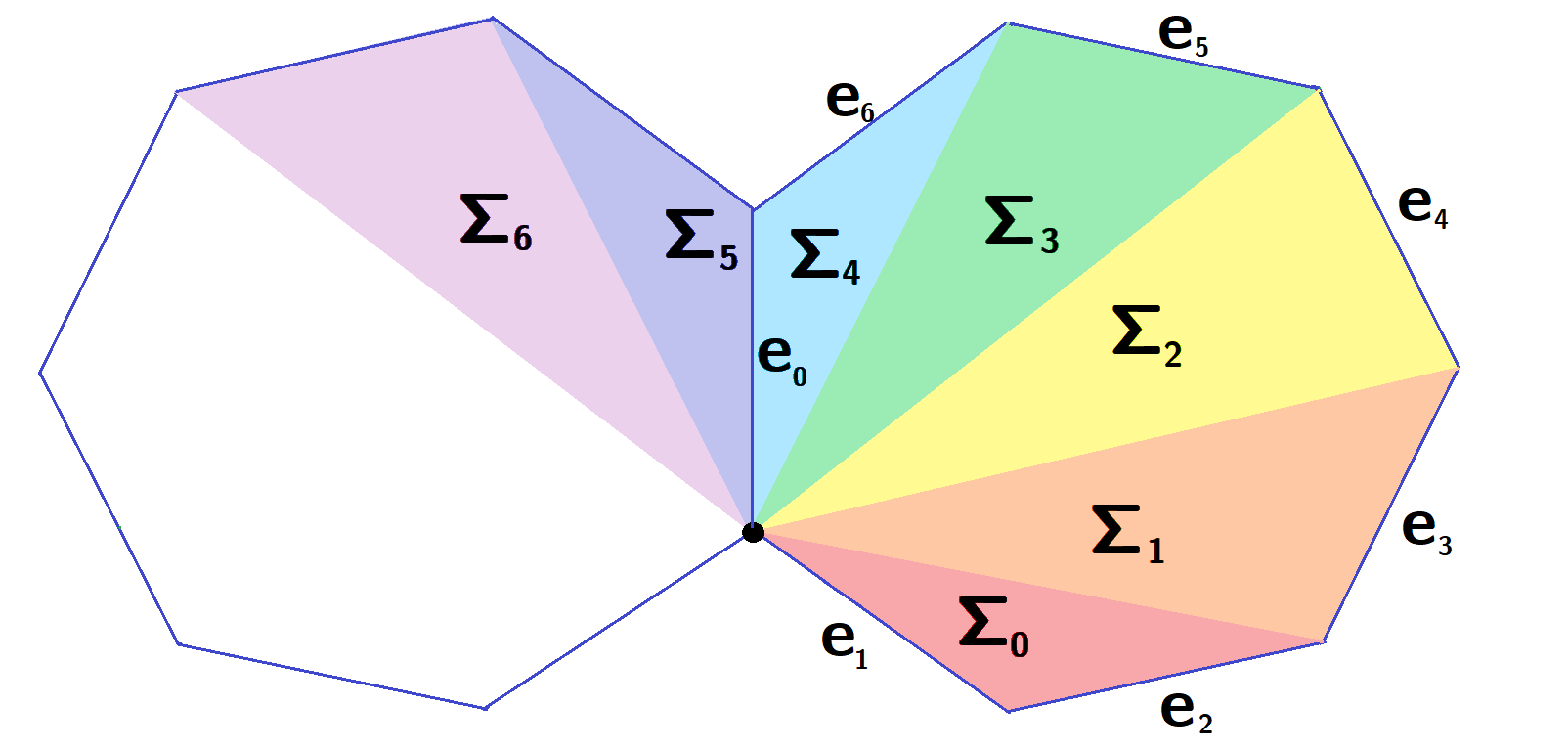}
\caption{The seven sectors for the double heptagon.}
\label{sectors_heptagon}
\end{figure}

To each sector $\Sigma_i$ there is  associated a \textit{transition diagram} which encodes  the admissible sequences of intersections with the sides of the double $(2m+1)$-gon, as in \cite[\S 3]{DL18}. Such a diagram looks like:
\begin{equation*}
e_{\sigma_i(1)} \leftrightharpoons e_{\sigma_i(2)} \leftrightharpoons \cdots \leftrightharpoons e_{\sigma_i(2n)}\leftrightharpoons e_{\sigma_i(2m+1)}
\end{equation*}
with $\sigma_i \in \mathfrak{S}_{2m+1}$, and it means that for any curve $\alpha$ in sector $\Sigma_i$, each intersection of $\alpha$ with $e_{\sigma_i(j)}$ is preceded and followed by an intersection with either $e_{\sigma_i(j-1)}$ or $e_{\sigma_i(j+1)}$ \footnote{Unless it reaches a singularity.}. In particular, each intersection with $e_{\sigma_i(1)}$ (resp. $e_{\sigma_i(2m+1)}$) is preceded (and followed) by an intersection with $e_{\sigma_i(2)}$ (resp. $e_{\sigma_i(2m)}$). We say that the side $e_{\sigma_i(1)}$ (resp. $e_{\sigma_i(2m+1)}$) is \textit{sandwiched} by $e_{\sigma_i(2)}$ (resp. $e_{\sigma_i(2m)}$) in the  sector $\Sigma_i$. 

For the sake of completeness, we provide the seven possible transition diagrams for the double heptagon.
\begin{equation}
\label{eq:sectors}
\begin{array}{ll}
\Sigma_0 : e_1 \leftrightharpoons e_2 \leftrightharpoons e_0 \leftrightharpoons e_3 \leftrightharpoons e_6 \leftrightharpoons e_4 \leftrightharpoons e_5 \\
\Sigma_1 : e_5 \leftrightharpoons e_6 \leftrightharpoons e_4 \leftrightharpoons e_0 \leftrightharpoons e_3 \leftrightharpoons e_1 \leftrightharpoons e_2 \\
\Sigma_2 : e_2 \leftrightharpoons e_3 \leftrightharpoons e_1 \leftrightharpoons e_4 \leftrightharpoons e_0 \leftrightharpoons e_5 \leftrightharpoons e_6 \\
\Sigma_3 : e_6 \leftrightharpoons e_0 \leftrightharpoons e_5 \leftrightharpoons e_1 \leftrightharpoons e_4 \leftrightharpoons e_2 \leftrightharpoons e_3 \\
\Sigma_4 : e_3 \leftrightharpoons e_4 \leftrightharpoons e_2 \leftrightharpoons e_5 \leftrightharpoons e_1 \leftrightharpoons e_6 \leftrightharpoons e_0 \\
\Sigma_5 : e_0 \leftrightharpoons e_1 \leftrightharpoons e_6 \leftrightharpoons e_2 \leftrightharpoons e_5 \leftrightharpoons e_3 \leftrightharpoons e_4 \\
\Sigma_6 : e_4 \leftrightharpoons e_5 \leftrightharpoons e_3 \leftrightharpoons e_6 \leftrightharpoons e_2 \leftrightharpoons e_0 \leftrightharpoons e_1 \\
\end{array}
\end{equation}

\subsection{Construction of the subdivision}
\label{sec:construction}

Let us denote by $\Sigma_\alpha$ (resp. $\Sigma_\beta$) the sector of $\alpha$ (resp. $\beta$), and $\sigma_\alpha$ (resp. $\sigma_\beta$) the corresponding permutation. Now, we cut $\alpha$ (resp. $\beta$) at each intersection with a non-sandwiched side in the sector $\Sigma_\alpha$ (resp. $\Sigma_\beta$). We get a decomposition $\alpha = \alpha_1 \cup \cdots \cup \alpha_k$ and $\beta = \beta_1 \cup \cdots \cup \beta_l$ with $k,l \geq 1$ and each segment is either (see Figure \ref{sandwich_example}): 
\begin{itemize}
\item A non-sandwiched segment which goes from one side to another non-adjacent side of one of the $(2m+1)$-gons.
\item A sandwiched segment which intersects a sandwiched side in its interior. Such segments go through both $(2m+1)$-gons (see Figure \ref{cases_proof}).
\item an initial or final segment $\alpha_1$, $\alpha_k$, $\beta_1$, $\beta_l$. We also call such segments non-sandwiched.
\end{itemize}

\begin{Nota}
When a segment $\alpha_i$ (or $\beta_j$) intersects the side $e$ which is sandwiched by $e'$ in the corresponding sector, we say that $\alpha_i$ is of type $e' \to e \to e'$. 
\end{Nota}
\begin{Rema}
\label{rem:parallelogram}
For a sandwiched segment $\alpha_i$ of type $e' \to e \to e'$, there is a parallelogram $P(e',e)\subset X_0$ with the following property: one of its sides is $e'$ and one of its diagonals is $e$. The segment $\alpha_i$ goes from one  $e'$-side to the opposite side. The closure of $P(e',e)$ is a cylinder.
\end{Rema}
Notice that for each sector $\Sigma_i$, the sides of the $(2m+1)$-gon which are sandwiched in $\Sigma_i$ are those having direction in the boundary of $\Sigma_i$. For instance, the sides of the double heptagon which are sandwiched in the sector $\Sigma_0$ are $e_1$ and $e_5$ (see Figure \ref{sectors_heptagon}). 

Moreover, the side of the $(2m+1)$-gon with direction in $\Sigma_i \cap \Sigma_{i-1}$ is sandwiched in both sectors $\Sigma_i$ and $\Sigma_{i-1}$, but in $\Sigma_i$ it is sandwiched by its successor in the cyclic order (modulo $2m+1$), while in $\Sigma_{i-1}$ it is sandwiched by its predecessor. For instance, $e_1$ is sandwiched by $e_2$ in $\Sigma_0$, while it is sandwiched by $e_0$ in $\Sigma_{2m}$ ($\Sigma_{6}$ for the double heptagon).\newline

Since no two sectors have the same pair (sandwiched side, sandwiching side), 
prescribing the type of $\alpha_i$ automatically tells which sector the direction of $\alpha$ belongs to.
\begin{Rema}
If $\alpha$ is a diagonal of the double $(2m+1)$-gon, the sector is not uniquely defined. However, in such cases $\alpha$ is not divided into pieces and $k =1$.
\end{Rema}

\begin{figure}[h]
\center
\includegraphics[height = 5cm]{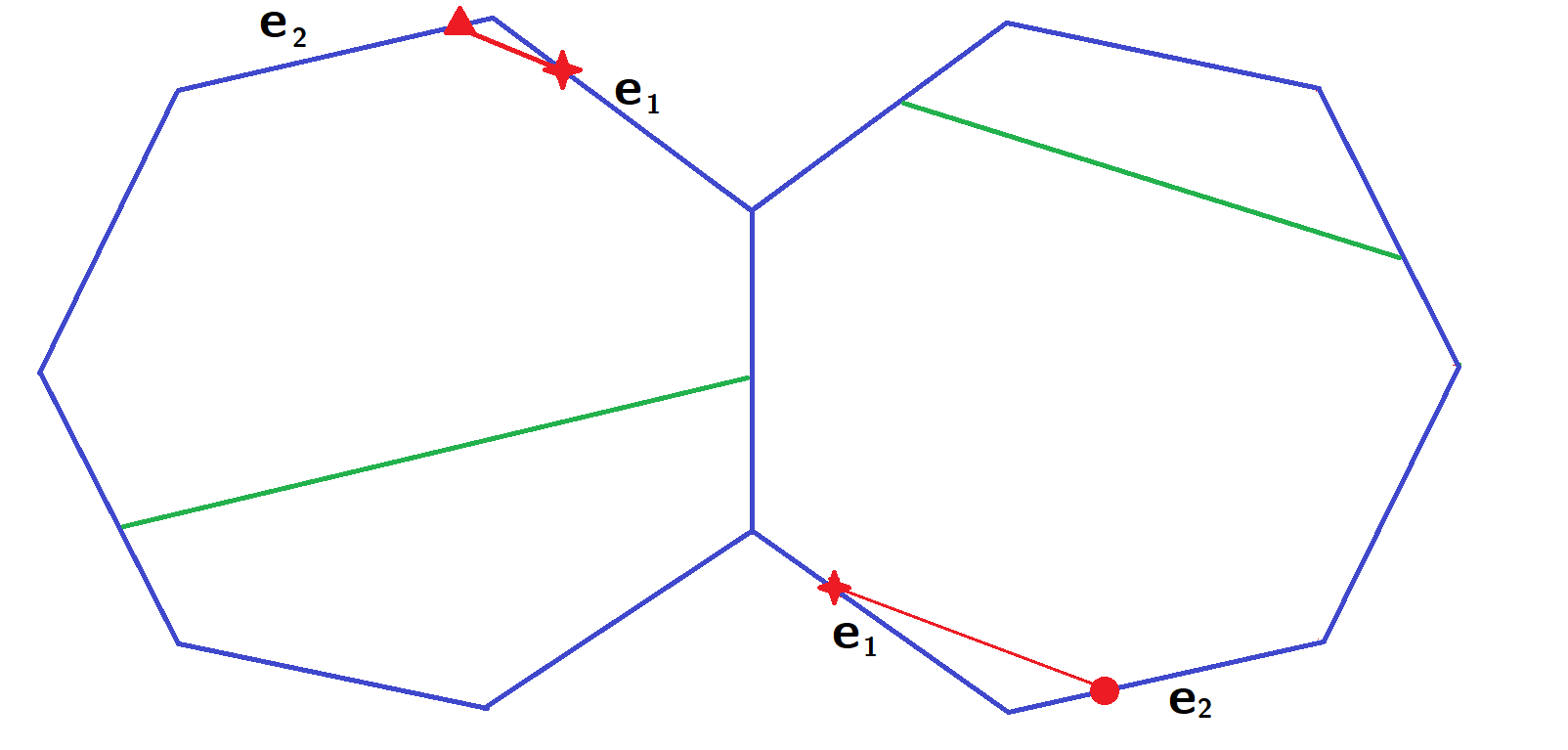}
\caption{Examples of non-sandwiched segments in green and a sandwiched segment in red. The sandwiched segment is of type $e_2 \to e_1 \to e_2$. Remark that the points $\bigcirc$ and $\triangle$ on the side $e_2$ are not the same.}
\label{sandwich_example}
\end{figure}

The next two  lemmas are the reason for this peculiar way of subdivising saddle connections. 

\begin{Lem}\label{lenght_segments}
Every segment of $\alpha$ (resp. $\beta$) has length at least $l_0$, with equality if and only if $\alpha$ (resp. $\beta$) is a side of the double $(2m+1)$-gon.
\end{Lem}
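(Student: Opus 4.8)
\textbf{Proof proposal for Lemma~\ref{lenght_segments}.}

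The plan is to analyze the three types of segments produced by the subdivision of \S\ref{sec:construction} and show each has length at least $l_0$. First I would dispose of the sandwiched segments: by Remark~\ref{rem:parallelogram}, a sandwiched segment of type $e'\to e\to e'$ crosses the parallelogram $P(e',e)$ from one $e'$-side to the opposite $e'$-side, so its length is at least the distance between the two parallel $e'$-sides of that parallelogram. Since $e'$ is a side of the regular $(2m+1)$-gon and $e$ is a diagonal that plays the role of the \emph{other} diagonal of $P(e',e)$, an explicit computation in the regular $(2m+1)$-gon (with side $l_0$) shows this width is strictly larger than $l_0$ — indeed, the segment must traverse \emph{both} copies of the $(2m+1)$-gon, so it is visibly longer than a single side. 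This handles sandwiched segments with strict inequality.

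Next I would treat the non-sandwiched segments that go from one side of a $(2m+1)$-gon to another non-adjacent side of the \emph{same} $(2m+1)$-gon. Here the key observation is that in a regular $(2m+1)$-gon of side $l_0$, the shortest chord joining two non-adjacent sides is realized by a diagonal, and every diagonal of the regular $(2m+1)$-gon has length strictly greater than $l_0$ (the shortest diagonal, joining vertices two apart, has length $l_0\cdot\frac{\sin(2\pi/(2m+1))}{\sin(\pi/(2m+1))}=2l_0\cos\frac{\pi}{2m+1}>l_0$). A segment crossing from the interior of one side to the interior of another non-adjacent side is at least as long as the corresponding diagonal, hence $>l_0$. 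The only remaining case is a segment joining two \emph{adjacent} sides; but by the definition of the subdivision, such a short crossing can only occur for an initial or final segment, or it would correspond to crossing a sandwiched side — and I should check, using the transition diagrams (as in~\eqref{eq:sectors}), that a segment joining two adjacent sides must actually pass through the common vertex and hence is in fact the degenerate case where $\alpha$ is itself a single side.

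Finally, for the initial and final segments $\alpha_1,\alpha_k$ (and similarly for $\beta$): these start at the singularity, travel through the surface, and end at a cut point (or at the singularity again). I would argue that such a segment, if it does not already contain a full side of the polygon, must still cross at least the width of one $(2m+1)$-gon or a full diagonal, again giving length $\geq l_0$. The equality case is then assembled: equality $l(\alpha)=l_0$ forces $k=1$ (no cutting occurred, so $\alpha$ realizes no crossing of a non-sandwiched side) and forces the single segment to have length exactly $l_0$, which by the case analysis can only happen when $\alpha$ is one of the sides of the double $(2m+1)$-gon.

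The main obstacle I anticipate is the bookkeeping for the initial/final segments and for segments joining adjacent sides: one must rule out, via the transition diagrams, the existence of a genuinely short segment that slips between two adjacent sides without being the trivial single-side case. The geometric estimates for sandwiched segments and for diagonals are routine trigonometry in the regular $(2m+1)$-gon; the combinatorial verification that the subdivision never produces a "too-short" piece is where care is needed.
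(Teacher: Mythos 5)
Your outline follows the same case division as the paper (non-sandwiched, sandwiched, initial/final segments), but your key estimate for sandwiched segments is incorrect, and this is precisely the case where the bound is not routine. A sandwiched segment of type $e'\to e\to e'$ does cross $P(e',e)$ from one $e'$-side to the opposite $e'$-side, but the perpendicular distance between these two parallel sides is \emph{not} larger than $l_0$: the other side vector of $P(e',e)$ is $e+e'$, so the distance between the two $e'$-sides equals $|(e+e')\wedge e'|/|e'|=|e\wedge e'|/|e'|=l_0\sin\frac{2\pi}{2m+1}$, which is strictly \emph{smaller} than $l_0$ (and tends to $0$ as $m\to\infty$). Nor does ``the segment traverses both copies of the $(2m+1)$-gon'' give anything: a path can pass from one copy to the other across a glued side and be arbitrarily short. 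What actually forces the length is the combination of (a) spanning from one $e'$-side to the opposite one while crossing the diagonal $e$, and (b) having direction in the sector $\Sigma_\alpha$, whose boundary contains the direction of $e$. Concretely, writing the endpoints as $A+se'$ and $A+(e+e')+te'$ with $s,t\in[0,1]$, the holonomy of the segment is $e+ue'$ with $u=1+t-s\ge 0$; since the angle between the consecutive sides $e$ and $e'$ is $\frac{2\pi}{2m+1}<\frac{\pi}{2}$, one gets $|e+ue'|^2=l_0^2\left(1+2u\cos\tfrac{2\pi}{2m+1}+u^2\right)\ge l_0^2$, with equality only for $u=0$, i.e.\ when the segment is the diagonal $e$ itself. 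Some such use of the direction constraint is the missing step; as written your argument only yields the insufficient bound $l_0\sin\frac{2\pi}{2m+1}$.

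A secondary inaccuracy: for non-sandwiched segments, the shortest chord of the regular $(2m+1)$-gon joining two non-adjacent sides is not a diagonal of length $2l_0\cos\frac{\pi}{2m+1}$; it is the intermediate side itself, of length exactly $l_0$ (the interior angles being obtuse, the closest points of two sides separated by one edge are the two endpoints of that edge). So the correct bound here is $\ge l_0$, not $\ge 2l_0\cos\frac{\pi}{2m+1}$, and this is where the equality discussion lives. Your worry about initial and final segments is resolved by the same observation applied to a vertex in place of a side: the distance from a vertex to any side not containing it is again at least $l_0$, because the obtuse interior angle pushes the foot of the perpendicular outside the segment, so the nearest point is an adjacent vertex at distance $l_0$.
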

\begin{proof}
A non-sandwiched segment goes from one side (or vertex) of a $(2m+1)$-gon to a {\em non-adjacent} side in the same $(2m+1)$-gon. So its length is a least  $l_0$.

Now take a sandwiched segment $\alpha_i$ which intersects a sandwiched side $e$ in its interior. If the type of $\alpha_i$ is $e' \to e \to e'$, then by Remark~\ref{rem:parallelogram}, $\alpha_i$ goes from the side $e'$ of $P(e',e)$ to the opposite side of $P(e',e)$. In particular the length of $\alpha_i$ is no less than that of $e'$ which is $l_0$.
\end{proof}
\subsection{Study of the intersections}
In this section, we investigate the possible intersections between the segments of $\alpha$ and $\beta$. First observe that $\alpha_i$ and $\beta_j$ can have nontrivial intersections on the interior of the sides of a sector $\Sigma_\alpha$. However if this happen, we can slightly deform $\alpha$ as follows. We change the slope of $\alpha_i$ and $\alpha_{i+1}$ so that the new segment $\alpha'_i$ intersects $\beta_j$ in the interior of $\Sigma_\alpha$ the same number of times $\alpha_i$ intersects $\beta_j$. We choose the deformation small enough so that we do not create new intersections with the others segments. The new path $\alpha' = \alpha_1 \cup \cdots  \cup \alpha'_i \cup \alpha'_{i+1} \cup \cdots  \cup \alpha_k$ is homologous to $\alpha$ by construction. In the sequel we will simply use $\alpha$ instead of $\alpha'$.

Now, since $\beta$ is made of segments of straight lines in the same direction, and $\alpha$ is made of segments whose directions are close to a given direction, all non-singular intersections have the same sign. In particular, adding the possible singular intersection, it gives:
\begin{equation}\label{eq_intersections}
Int(\alpha,\beta) \leq \sum_{i,j} |Int(\alpha_i, \beta_j)| + 1
\end{equation}
where $|Int(\alpha_i, \beta_j)|$ is the geometric intersection between $\alpha_i$ and $\beta_j$. 

\begin{Lem}\label{intersections_segments}
If $\alpha$ and $\beta$ are not both diagonals, then $\text{Int}(\alpha, \beta) \leq kl$.
\end{Lem}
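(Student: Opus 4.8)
The statement is that $\mathrm{Int}(\alpha,\beta) \le kl$ where $k$ and $l$ are the numbers of segments in the subdivisions $\alpha = \alpha_1 \cup \cdots \cup \alpha_k$ and $\beta = \beta_1 \cup \cdots \cup \beta_l$. By inequality \eqref{eq_intersections} it suffices to show that $\sum_{i,j} |\mathrm{Int}(\alpha_i,\beta_j)| + 1 \le kl$, and since there are $kl$ pairs $(i,j)$, this will follow once I show that $|\mathrm{Int}(\alpha_i,\beta_j)| \le 1$ for every pair, with at least one pair contributing $0$ (to absorb the $+1$). So the heart of the matter is: \emph{each segment $\alpha_i$ meets each segment $\beta_j$ at most once}. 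The plan is to prove this by a case analysis on the types of $\alpha_i$ and $\beta_j$ (non-sandwiched vs. sandwiched), exploiting the geometric pictures from Remark \ref{rem:parallelogram} and the transition diagrams \eqref{eq:sectors}.

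First I would handle the case where at least one of the two segments, say $\beta_j$, is non-sandwiched: then $\beta_j$ is a straight segment that crosses (at most) one of the two $(2m+1)$-gons, going from one side to a non-adjacent side, so it is a chord of a single convex polygon. A straight segment $\alpha_i$ lying in a nearby direction — itself either a chord of one polygon or a segment crossing both through a sandwiched side — can cross a fixed chord of a convex polygon at most once inside that polygon; since $\beta_j$ lives in only one polygon, $|\mathrm{Int}(\alpha_i,\beta_j)| \le 1$. The slightly more delicate case is when \emph{both} $\alpha_i$ and $\beta_j$ are sandwiched. Then by Remark \ref{rem:parallelogram}, $\alpha_i$ crosses the cylinder $P(e',e)$ from one $e'$-side to the opposite $e'$-side, and similarly $\beta_j$ crosses $P(f',f)$. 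Here I would argue that two such "cylinder-crossing" segments, being in directions close to the two distinct sector boundaries (hence making a definite angle bounded away from $0$ and $\pi$), and each being \emph{monotone} with respect to the foliation by the core direction of its cylinder, can intersect at most once: one can use that $\alpha_i$ is a graph over $e'$ and $\beta_j$ a graph over $f'$, and the difference of the two is strictly monotone along any common transversal. Concretely, lifting to the universal cover / developing both polygons into the plane, each sandwiched segment develops to an honest straight segment, and two straight segments meet at most once — the only subtlety is checking that the identifications (gluings) do not create a second intersection, which is controlled by the fact that $\alpha_i$ and $\beta_j$ each pass through the relevant region exactly once by construction of the subdivision.

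Finally, to account for the $+1$ in \eqref{eq_intersections}, I note that the hypothesis "$\alpha$ and $\beta$ are not both diagonals" guarantees $k \ge 2$ or $l \ge 2$ (a saddle connection that is not a diagonal gets genuinely subdivided, or is a side, in which case it is itself a single segment but then the \emph{other} one is subdivided — this needs a short check using Lemma \ref{lenght_segments} and the fact that $\alpha=\beta$ is excluded since parallel saddle connections don't intersect). Then there exists at least one pair $(i_0,j_0)$ — for instance one where $\alpha_{i_0}$ and $\beta_{j_0}$ are "far apart" along their respective saddle connections, or simply counting: $kl > k + l - 1 \ge (\text{number of pairs that can possibly intersect})$ is too crude, so instead I would argue directly that not every one of the $kl$ pairs can have geometric intersection $1$, e.g. the first segment $\alpha_1$ and the last segment $\beta_l$ cannot both be crossed if $\alpha$ and $\beta$ are disjoint except at finitely many points — more carefully, one shows $\sum_{i,j}|\mathrm{Int}(\alpha_i,\beta_j)| \le kl - 1$ using that an endpoint of $\alpha$ lies outside at least one $\beta_j$. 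I expect this last bookkeeping step — pinning down exactly why one pair contributes $0$ so that the $+1$ is absorbed — together with the sandwiched–sandwiched case, to be the main obstacle; the rest is routine convex-geometry of chords in polygons.
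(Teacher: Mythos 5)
There is a genuine gap, and it sits exactly where you locate the ``only subtlety'': the central claim that $|\mathrm{Int}(\alpha_i,\beta_j)|\le 1$ for \emph{every} pair of segments is false. When $\alpha_i$ is sandwiched of type $e_2\to e_1\to e_2$ and $\beta_j$ is sandwiched of type $e_1\to e_2\to e_1$, both segments traverse the same region $P(e_2,e_1)$, whose closure is a genuine flat cylinder (Remark~\ref{rem:parallelogram}), not a disk. Two straight segments crossing a cylinder in the two ``dual'' ways (one from boundary to boundary, one wrapping across the core curve $e_1$ versus across $e_2$) can and do intersect twice; developing to the universal cover does not save you, because the two intersection points live in different fundamental domains of the cylinder. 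This is precisely case~(3) of the paper, and it cannot be argued away: the paper instead shows that whenever such a doubly-intersecting pair occurs, there is a compensating pair $(\alpha_i,\beta_{j'})$ with no intersection, by re-cutting the relevant run of $\beta$-segments along $e_2$ instead of $e_1$ (one fewer segment, each meeting each $\alpha_i$ at most once), so that the total count over the block is at most $(p+3)(q+2)$ against $(p+3)(q+3)$ pairs. Without some version of this compensation argument your bound $\sum_{i,j}|\mathrm{Int}(\alpha_i,\beta_j)|\le kl-1$ does not follow.

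A secondary issue: even in the sandwiched--sandwiched cases that do satisfy $|\mathrm{Int}(\alpha_i,\beta_j)|\le 1$ (the paper's cases~(2), (5), (6)), the reason is not monotonicity or convexity --- a priori these pairs can also meet twice, since each segment consists of two straight pieces lying in \emph{different} copies of the $(2m+1)$-gon. The paper excludes the second intersection by a slope comparison that forces $\beta$ to be a diagonal, contradicting the fact that diagonals are not subdivided; your ``graph over $e'$'' argument does not substitute for this. Your treatment of the mixed case (one segment non-sandwiched) and your idea for absorbing the $+1$ (the first non-sandwiched piece of $\alpha$ and the next one lie in different polygons, so $\beta_1$ cannot meet both) are essentially the paper's arguments and are fine.
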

\begin{proof}[Proof of Lemma~\ref{intersections_segments}]
We will show that $\sum_{i,j} |Int(\alpha_i, \beta_j)| \leq kl-1$. Let us fix $i,j$. We first observe that if either $\alpha_i$ or $\beta_j$ is a non-sandwiched segment, then $\alpha_i$ and $\beta_j$ intersect at most once (possibly on a side). Indeed a non-sandwiched segment goes from one side to another non-adjacent side of one of the $(2m+1)$-gons. In particular it is a segment that is contained entirely in one of the $(2m+1)$-gons. A sandwiched segment consists of two straight lines, not contained in the same $(2m+1)$-gon. Hence in total they intersect at most once.

Thus it remains to consider the case where $\alpha_i$ and $\beta_j$ are sandwiched segments. Up to a rotation and a symmetry, we can assume $\alpha_i$ is of type $e_2 \to e_1 \to e_2$ (see~Figure \ref{sectors_heptagon}). The sector determined by $\alpha$ is necessarily $\Sigma_0$.

Now if $\beta_j$ is sandwiched but neither $e_1$ nor $e_2$ appear in the type of $\beta_j$, then $\alpha_j$ is contained in the parallelogram $P(e_k,e_l)$ (defined in Remark \ref{rem:parallelogram}) for some $e_k,e_l\not \in \{e_1,e_2\}$. In particular $\alpha_i$ does not intersect this parallelogram, and so not $\beta_j$ either. \newline

Eventually it remains to treat the following cases where $\beta_j$ is of type:
$$
\begin{array}{llllllll}
(1)& e_0 \to e_1 \to e_0 &&&&& (2)& e_1 \to e_0 \to e_1 \\
(3)& e_1 \to e_2 \to e_1 &&&&& (4)& e_2 \to e_1 \to e_2 \\
(5)& e_2 \to e_3 \to e_2 &&&&& (6)& e_3 \to e_2 \to e_3
\end{array}
$$

In all situations but (3), we can show that $\alpha_i$ and $\beta_j$ intersects at most once. We proceed case by case. \newline

\begin{figure}[h]
\center
\includegraphics[height = 13cm]{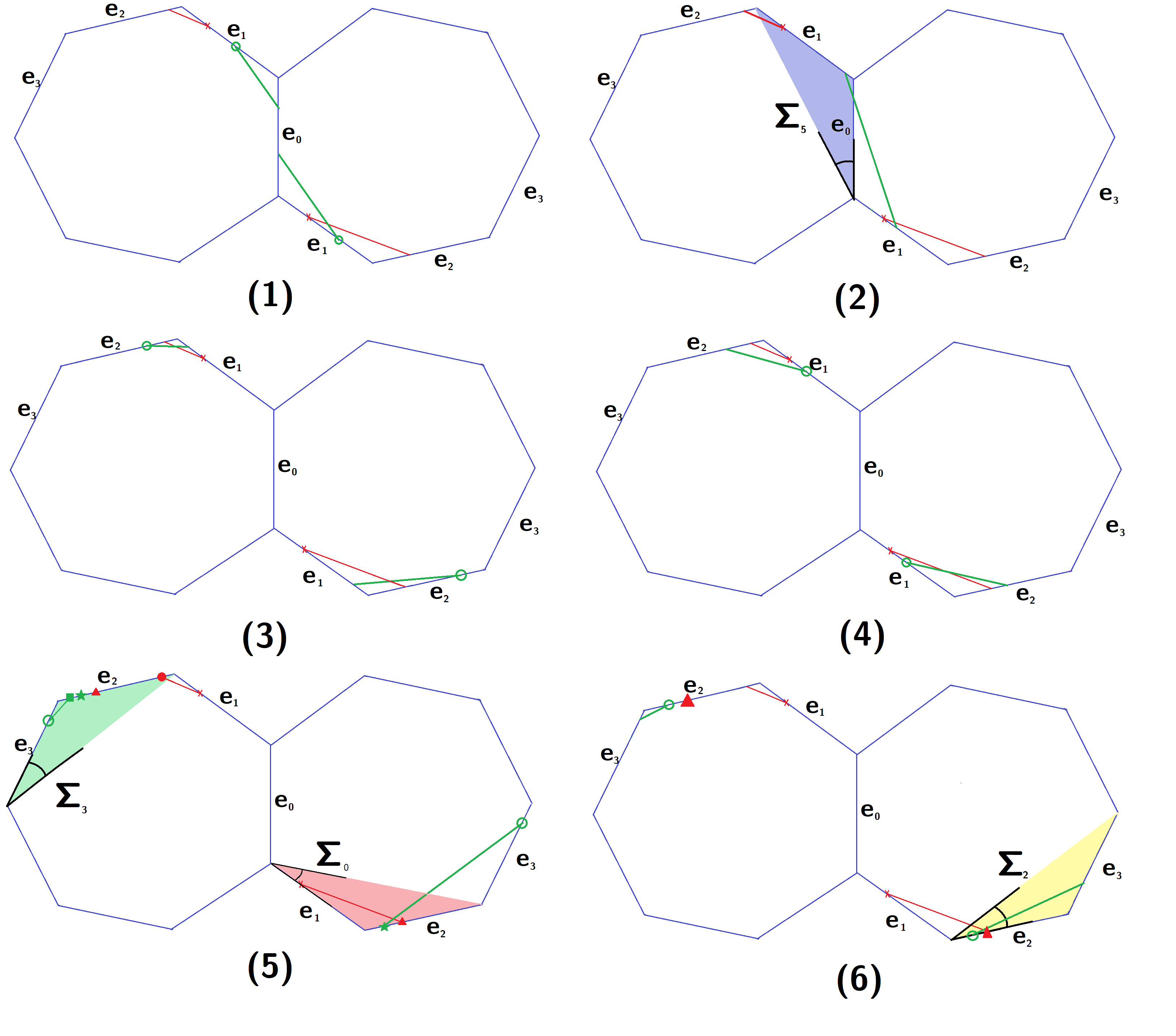}
\caption{The six different cases where $\alpha_i$ (in red) and $\beta_j$ (in green) are sandwiched and intersect.}
\label{cases_proof}
\end{figure}

\textbf{Case (1):} Recall that $\alpha_i$ is contained in the parallelogram $P(e_2,e_1)$ and goes from one $e_2$-side  to the opposite side. Let us denote by $e,e'$ the two other sides of $P(e_2,e_1)$. Similarly $\beta_j$ is contained in another parallelogram $P(e_0,e_1)$ sharing the same diagonal, and $\beta_j$ goes from one side $e_0$ to the opposite side. We can see that the intersection of $P(e_2,e_1)$ with $P(e_0,e_1)$ is connected: it is a parallelogram. In particular the intersection of $\beta_j$ with $P(e_2,e_1)$ goes from the side $e$ to the side $e'$ and thus intersects $\alpha_i$ exactly once (see~Figure~\ref{fig:lemme6.7cas1}). \newline

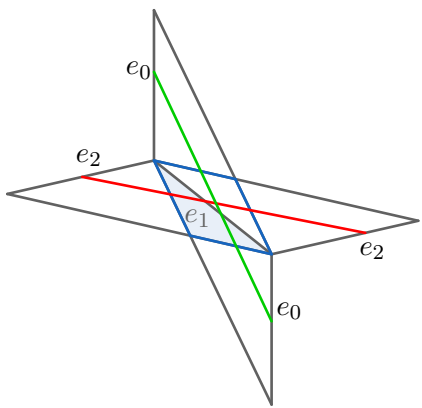
\begin{figure} [h]
	\begin{center}
\definecolor{qqccqq}{rgb}{0,0.8,0}
\definecolor{ffqqqq}{rgb}{1,0,0}
\definecolor{wrwrwr}{rgb}{0.3803921568627451,0.3803921568627451,0.3803921568627451}
\definecolor{rvwvcq}{rgb}{0.08235294117647059,0.396078431372549,0.7529411764705882}
\begin{tikzpicture}[scale=2,line cap=round,line join=round,>=triangle 45,x=1cm,y=1cm]

\fill[line width=2pt,color=rvwvcq,fill=rvwvcq,fill opacity=0.10000000149011612] (0,0) --  (0.7818314824680296,-0.6234898018587334) -- (0.24078730940376422,-0.5) -- cycle;
\draw [line width=1pt,color=wrwrwr] (-0.9749279121818234,-0.22252093395631434)-- (0,0);
\draw [line width=1pt,color=wrwrwr] (0,0)-- (1.756759394649853,-0.40096886790241915);
\draw [line width=1pt,color=wrwrwr] (-0.9749279121818234,-0.22252093395631434)-- (0.7818314824680296,-0.6234898018587334);
\draw [line width=1pt,color=wrwrwr] (0.7818314824680296,-0.6234898018587334)-- (1.756759394649853,-0.40096886790241915);
\draw [line width=1pt,color=wrwrwr] (0,0)-- (0.7818314824680296,-0.6234898018587334);
\draw [line width=1pt,color=wrwrwr] (0,1)-- (0,0);
\draw [line width=1pt,color=wrwrwr] (0,0)-- (0.7818314824680296,-1.6234898018587334);
\draw [line width=1pt,color=wrwrwr] (0.7818314824680296,-1.6234898018587334)-- (0.7818314824680296,-0.6234898018587334);
\draw [line width=1pt,color=wrwrwr] (0.7818314824680296,-0.6234898018587334)-- (0,1);
\draw [line width=1pt,color=rvwvcq] (0,0)-- (0.5410441730642654,-0.12348980185873357);
\draw [line width=1pt,color=rvwvcq] (0.5410441730642654,-0.12348980185873357)-- (0.7818314824680296,-0.6234898018587334);
\draw [line width=1pt,color=rvwvcq] (0.7818314824680296,-0.6234898018587334)-- (0.24078730940376422,-0.5);
\draw [line width=1pt,color=rvwvcq] (0.24078730940376422,-0.5)-- (0,0);
\draw [line width=1pt,color=ffqqqq] (-0.4774446222089946,-0.10897361940187346)-- (1.4045917409084931,-0.48134883676017437);
\draw [line width=1pt,color=qqccqq] (0,0.5896296296296294)-- (0.7818314824680296,-1.0696296296296293);

\draw[color=wrwrwr] (0.2903703703703685,-0.38518518518518463) node {$e_1$};

\draw[color=black] (-0.43259259259259475,0.0177777777777781) node {$e_2$};

\draw[color=black] (1.4518518518518504,-0.6) node {$e_2$};

\draw[color=black] (-0.1,0.6) node {$e_0$};

\draw[color=black] (.9,-1) node {$e_0$};

\end{tikzpicture}
\caption{Case 1 re-drawn}
\label{fig:lemme6.7cas1}
\end{center}
\end{figure}

\textbf{Case (4):} By Remark~\ref{rem:parallelogram} $\alpha_i$ and $\beta_j$ are contained in the {\em same} $P(e_2,e_1)$. They both go from one side $e_2$ to the opposite. In particular they intersect at most once. \newline

\textbf{Case (2):}
By~\eqref{eq:sectors}, since $e_0$ is sandwiched by $e_1$, the direction of $\beta$ lies in the sector $\Sigma_5$. Moreover $\beta_j$ lies in the parallelogram $P(e_1,e_0)$ and goes from the side $e_1$ to the opposite (see Figure~\ref{fig:case2}). The segments $\alpha_i$ and $\beta_j$ intersect each other at most twice. Assume by contradiction $\alpha_i \cap \beta_j =\{p,q\}$ with $p\neq q \in X_0$. Since $p$ and $q$ belong to different parts of the segment $\alpha_i$, they belong to  different copies of the $(2m+1)$-gon. The slope $s$ of the holonomy vector defined by $pq$ coincide with the slope of $\beta$ and thus belongs to the sector $\Sigma_5$ {\em i.e.} $\mathrm{slope}(e_4) \leq s \leq 0$ (with equality iff $\beta$ is a diagonal). On the other hand, the intersection of $\alpha_i$ with the two sides $e_1$ on the plane template of Figure~\ref{fig:case2} determines two points $c$ and $d$. 

By construction the slope $s$ satisfies $s \leq \mathrm{slope}(cd)$. Since $\mathrm{slope}(cd)=\mathrm{slope}(e_4)$ we get that $\beta$ is a diagonal. We run into a contradiction because $\beta_j=\beta$ is not sandwiched (see beginning of Section~\ref{sec:construction}), and therefore $\mathrm{Int}(\alpha_i,\beta_j) \leq 1$. \newline

\begin{figure}[h]

		\begin{center}
		\begin{tikzpicture}[scale=2]
		\def\tr{(-2,0)}
		\draw[thick] (1.0000, 0.00000) -- (0.62349,0.78183) -- (-0.22252,0.97493) -- (-0.90097,0.43388) -- (-0.90097,-0.43388) -- (-0.22252,-0.97493) -- (0.62349, -0.78183) -- cycle;
		
		\draw[thick] (-1.0000-1.80194, 0.00000) -- (-0.62349-1.80194,0.78183) -- (0.22252-1.80194,0.97493) -- (0.90097-1.80194,0.43388) -- (0.90097-1.80194,-0.43388) -- (0.22252-1.80194,-0.97493) -- (-0.62349-1.80194, -0.78183) -- cycle;

		\draw[thick,dashed] (-0.90097,0.43388) -- (-0.22252,-0.97493);
		\draw[thick,dashed] (-0.90097,-0.43388) -- (0.62349, -0.78183);
		\draw[thick,dashed] (0.90097-1.80194,-0.43388) -- (0.22252-1.80194,0.97493);
		\draw[thick,dashed] (-0.62349-1.80194,0.78183) -- (0.90097-1.80194,0.43388);
		
		\coordinate (A) at (intersection of -0.90097,0.43388-- -0.22252,-0.97493 and -0.90097,-0.43388 -- 0.62349, -0.78183);
		\draw[draw=none,fill=gray!50] (A) -- (-0.22252,-0.97493) -- (-0.90097,-0.43388) -- cycle;
		\coordinate (B) at (intersection of 0.90097-1.80194,-0.43388-- 0.22252-1.80194,0.97493 and -0.62349-1.80194,0.78183 -- 0.90097-1.80194,0.43388);
		\draw[draw=none,fill=gray!50] (B) -- (-0.90097,0.43388) -- (0.22252-1.80194,0.97493) -- cycle;
		
		\draw[thick,color=green] (-1.4,0.83) -- (-0.32,-0.9);
		\draw[thick,color=red] (-1.87,0.9) -- (-1.2,0.67);
		\draw[thick,color=red] (-0.44,-0.8) -- (0,-0.93);
		
		\coordinate (p) at (intersection of -1.4,0.83-- -0.32,-0.9 and -1.87,0.9 -- -1.2,0.67);
		\coordinate (q) at (intersection of -1.4,0.83-- -0.32,-0.9 and -0.44,-0.8 -- 0,-0.93);
		
		\coordinate (c) at (intersection of -0.44,-0.8 -- 0,-0.93,-0.9 and -0.22252,-0.97493 -- -0.90097,-0.43388);
		\coordinate (d) at (intersection of -1.87,0.9 -- -1.2,0.67 and -0.90097,0.43388 -- 0.22252-1.80194,0.97493);

		\filldraw[black] (p) circle (0.2pt) node[left]{$\scriptstyle{p}$};
		\filldraw[black] (q) circle (0.2pt) node[right]{$\scriptstyle{q}$};

		  \draw (c) circle[radius=0.4pt] node[left]{$\scriptstyle{c}$};		
  		  \draw (d) circle[radius=0.4pt] node[right]{$\scriptstyle{d}$};

		\draw (-.85,0) node[left]{$\scriptstyle e_0$};
		\draw (-0.65,.65) node[above]{$\scriptstyle e_6$};
		\draw (-0.74,-0.85) node[above]{$\scriptstyle e_1$};
		\draw (.8,0.5) node[right]{$\scriptstyle e_4$};
		\draw (-0.67+.8,1) node[right]{$\scriptstyle e_5$};
		\draw (0.2,-0.9) node[below]{$\scriptstyle e_2$};
		\draw (-0.74+1.7,-0.5) node[above]{$\scriptstyle e_3$};
		\end{tikzpicture}
	\end{center}
	\caption{The parallelogram $P(e_1,e_0)$ and two intersections in Case (2): $\beta\not \in \Sigma_5$}
	\label{fig:case2}
\end{figure}
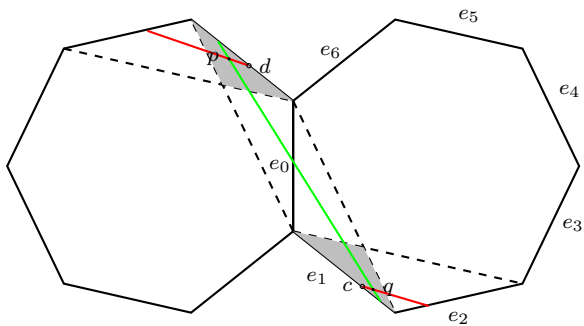

\textbf{Case (6):}
This case is the same as Case (2)  rotating by an angle $\frac{2\pi}{2m+1}$ and swapping $\alpha_i$ and $\beta_j$.\newline

\textbf{Case (5):}
By~\eqref{eq:sectors}, since $e_3$ is sandwiched by $e_2$, the direction of $\beta$ lies in the sector $ \Sigma_3$ and $\beta$ is contained in the parallelogram $P(e_2,e_3)$. In particular its slope verifies $s\geq \mathrm{slope}(e_6)$. The segment $\alpha_i$ intersects  $P(e_2,e_3)$ with two connected components. One such component intersects one side $e_2$ at a point $a$ while the other component intersects the other side $e_2$ at a point $b$ (which project in the double $(2m+1)$-gon to the red bullet and the red triangle in Figure~\ref{cases_proof} Case (5)).
As in Case (2) the segments $\alpha_i$ and $\beta_j$ intersect each other at most twice. Assume by contradiction $\alpha_i \cap \beta_j =\{p,q\}$ with $p\neq q \in P(e_2,e_3)$. Necessarily $\mathrm{slope}(ab) \geq \mathrm{slope}(pq)=s$. Since $\alpha\in \Sigma_0$ we can check that $\mathrm{slope}(ab)\leq \mathrm{slope}(e_6)$ (recall that the slope of $\alpha'_i$ is very close to the slope of $\alpha_i$).

Thus $s=\mathrm{slope}(e_6)$ and $\beta$ is a diagonal: we again run into a contradiction because $\beta_j=\beta$ is not sandwiched. Therefore $\mathrm{Int}(\alpha_i,\beta_j) \leq 1$ as desired. \newline

\textbf{Conclusion:}
Setting aside case (3) which will be considered below, we have for every $i,j$, $|Int(\alpha_i,\beta_j)| \leq 1$. In particular $\sum_{i,j} |Int(\alpha_i, \beta_j)| \leq kl$. Recall that we want the left quantity to be less than $kl-1$ instead of $kl$; the desired bound comes from the following observation:
 up to permuting $\alpha$ and $\beta$, we may assume $\alpha$ is not a diagonal. Then $\alpha_1$ is non-sandwiched and lies in one of the $(2m+1)$-gons while the second non-sandwiched $\alpha_i$ lies in the other $(2m+1)$-gon\footnote{Notice that the last non-sandwiched segment before a sequence of sandwiched segments and the next non-sandwiched segment after such a sequence lie in different $(2m+1)$-gons, as in Figures \ref{case_3_image_2} and \ref{case_3_image_3}.}. In particular, since $\beta_1$ is non sandwiched, it lies in one of the two $(2m+1)$-gons and it cannot intersect both $\alpha_1$ and the next non-sandwiched $\alpha_i$.

In particular, $\sum_{i,j} |Int(\alpha_i, \beta_j)| \leq kl-1$. Hence, by Formula \ref{eq_intersections}, we get that $Int(\alpha,\beta) \leq kl$.

\subsection*{Treating case (3)}

In this paragraph we assume there are indices $i,j$ such that $\alpha_i$ is sandwiched of type $e_2 \to e_1 \to e_2$ and $\beta_j$ is sandwiched of type $e_1 \to e_2 \to e_1$. In this case, $\alpha_i$ and $\beta_j$ could intersect twice, but we will show that if this happens then there is an index $j'$ such that $\alpha_i$ and $\beta_{j'}$ don't intersect. Hence the conclusion $\sum_{i,j} |Int(\alpha_i,\beta_j)| \leq kl$ will still hold (and in fact the inequality will be strict, as we require).\newline

\begin{figure}[h]
\center
\includegraphics[height = 7cm]{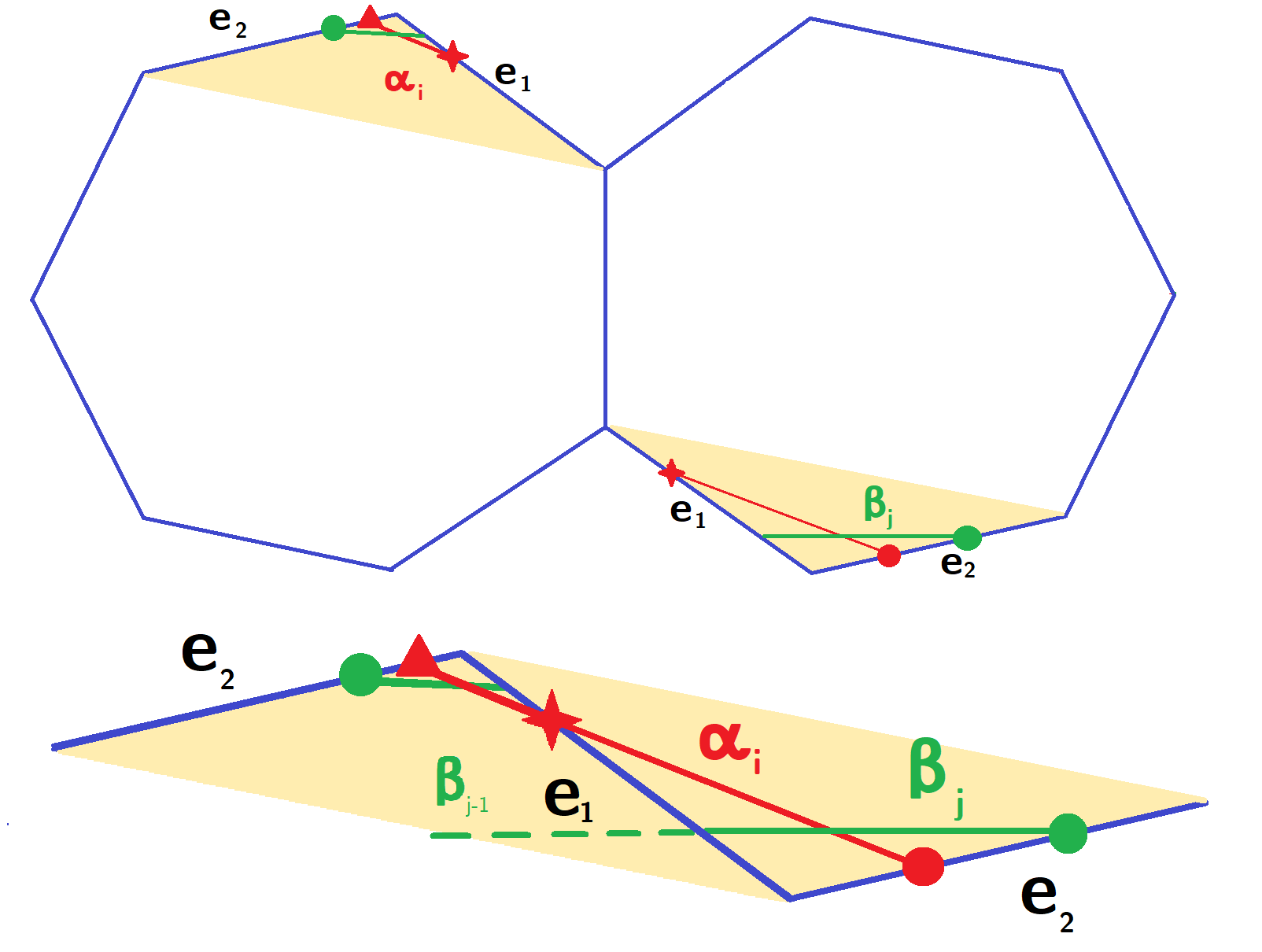}
\caption{$\alpha_i$ and $\beta_j$ could intersect twice in the configuration of case $3$. Below, a closer look at the cylinder $P(e_2,e_1)$. In the example of this picture, $\alpha_i$ does not intersect $\beta_{j-1}$.}
\label{case_3}
\end{figure}

To do that let us assume $\alpha_{i_0}, \dots, \alpha_{i_0 +p}$ (resp. $\beta_{j_0}, \dots, \beta_{j_0 +q}$) are consecutively sandwiched of type $e_2 \to e_1 \to e_2$ (resp. $e_1 \to e_2 \to e_1$), this sequence being maximal (i.e $\alpha_{i_0-1}$ and $\alpha_{i_0+p+1}$ -- resp. $\beta_{j_0-1}$ and $\beta_{j_0+q+1}$ -- are not sandwiched). An example of such a configuration is depicted in Figure \ref{case_3_image_2} for $p=q=0$ and in Figure \ref{case_3_image_3} for $p=q=3$. We claim that there are at most $(p+3)(q+2)$ intersections between $\alpha_{i_0-1} \cup \alpha_{i_0} \cup \cdots \cup \alpha_{i_0+p} \cup \alpha_{i_0+p+1}$ and $\beta_{j_0-1} \cup \beta_{j_0} \cup \cdots \cup \beta_{j_0+q} \cup \beta_{j_0+q+1}$, while there are $(p+3)(q+3)$ pairs of segments.\newline

\begin{figure}[h]
\center
\includegraphics[height = 3cm]{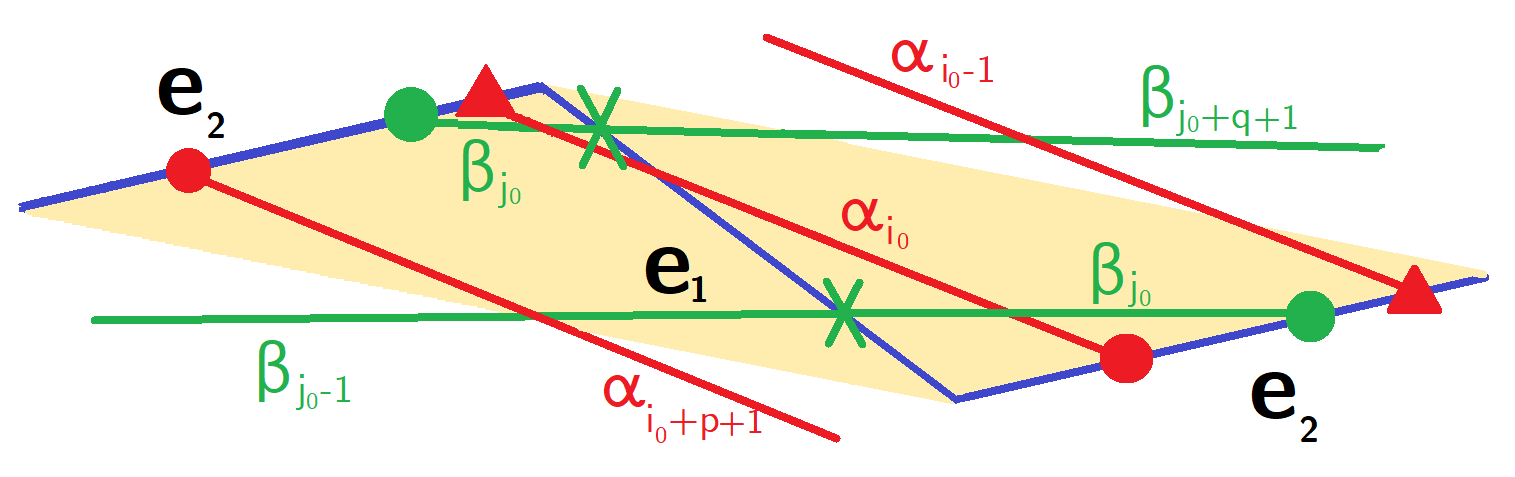}
\caption{$\alpha_{i_0-1} \cup \alpha_{i_0} \cup \cdots \cup \alpha_{i_0+p} \cup \alpha_{i_0+p+1}$ and $\beta_{j_0-1} \cup \beta_{j_0-1} \cup \cdots \cup \beta_{j_0+q} \cup \beta_{j_0+q+1}$ for $p=q=0$. There are only six intersections but nine pairs of segments.}
\label{case_3_image_2}
\end{figure}

\begin{figure}[h]
\center
\includegraphics[height = 3cm]{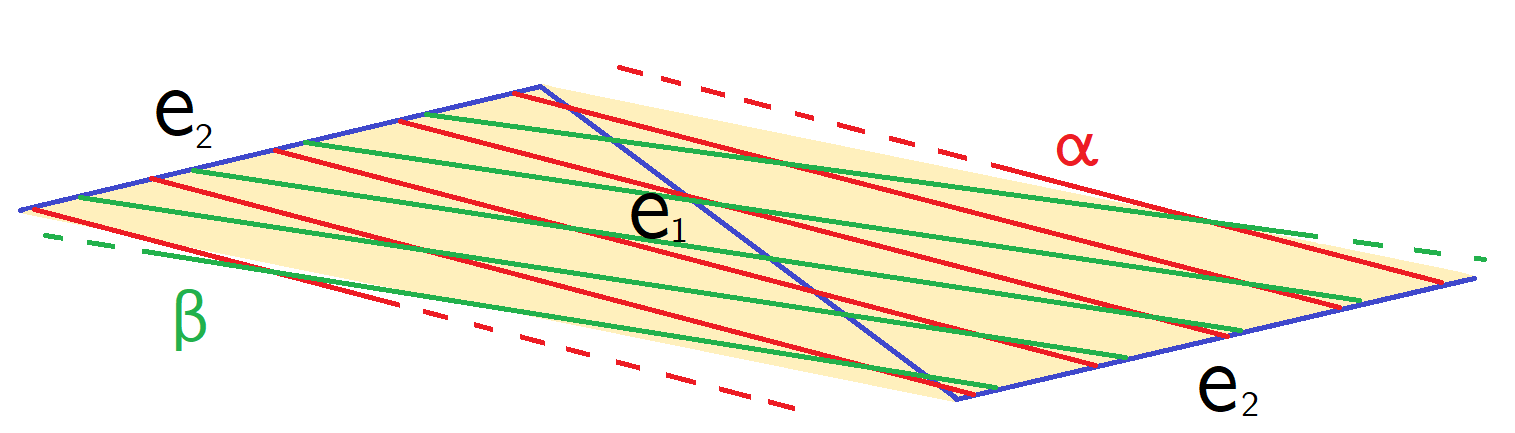}
\caption{$\alpha_{i_0-1} \cup \alpha_{i_0} \cup \cdots \cup \alpha_{i_0+p} \cup \alpha_{i_0+p+1}$ and $\beta_{j_0-1} \cup \beta_{j_0-1} \cup \cdots \cup \beta_{j_0+q} \cup \beta_{j_0+q+1}$ for $p=q=3$. There are only ten intersections but thirty-six pairs of segments.}
\label{case_3_image_3}
\end{figure}

Indeed, in this configuration $\alpha_{i_0-1} \cup \alpha_{i_0} \cup \cdots \cup \alpha_{i_0+p} \cup \alpha_{i_0+p+1}$ and $\beta_{j_0-1} \cup \beta_{j_0} \cup \cdots \cup \beta_{j_0+q} \cup \beta_{j_0+q+1}$ go through the cylinder $P(e_2,e_1)$ defined by $e_1$ and $e_2$, as in Figure \ref{case_3_image_2}. Now, instead of cutting $\beta$ each time it crosses $e_1$ we can cut $\beta$ each time it crosses $e_2$. Notice that $\beta$ crosses $e_1$ once  more than it crosses $e_2$, so it gives a decomposition $\beta_{j_0-1} \cup \cdots \cup \beta_{j_0+q} \cup \beta_{j_0+q+1} = \tilde{\beta}_{j_0} \cup \cdots \cup \tilde{\beta}_{j_0+q} \cup \tilde{\beta}_{j_0+q+1}$ with only $q+2$ segments while each $\tilde{\beta}_j$ for $j \in \llbracket j_0, j_0+q+1 \rrbracket$ can intersect each of the $\alpha_i$ for $i \in \llbracket i_0-1, i_0+p+1 \rrbracket$ at most once, which leaves at most $(p+3)(q+2)$ intersections.\newline

In conclusion, summing with all other segments yields $\sum_{i,j} |Int(\alpha_i, \beta_j)| < kl$. Adding the possible singular intersection, we get the desired result. This concludes the proof of Lemma \ref{intersections_segments} 
\end{proof}

\subsection{Conclusion}
We are now able to prove the main proposition of this section.

\begin{proof}[Proof of Proposition \ref{Double2m+1gon}]
If either $\alpha$ or $\beta$ is not a diagonal, then:
\begin{itemize}
\item $l(\alpha)l(\beta) > kl \cdot l_0^2$ by Lemma \ref{lenght_segments},
\item $Int(\alpha, \beta) \leq kl$ by Lemma \ref{intersections_segments},
\end{itemize}
In particular, we have:
\begin{equation*}
\frac{Int(\alpha,\beta)}{l(\alpha)l(\beta)} < \frac{1}{l_0^2}
\end{equation*}
as desired.\newline
Otherwise, both $\alpha$ and $\beta$ are diagonals. Then:
\begin{enumerate}
\item either none of them is a  side of a $(2m+1)$-gon and then:
\begin{enumerate}
\item $l(\alpha)l(\beta) \geq 4\cos^2(\frac{\pi}{2m+1})l_0^2 > 2l_0^2$ because the shortest diagonals of the $(2m+1)$-gon which are not sides have length $2\cos(\frac{\pi}{2m+1})l_0$.
\item $Int(\alpha,\beta) \leq 2$ because there is at most one non-singular intersection and one singular intersection.
\end{enumerate}
In particular, $\frac{Int(\alpha,\beta)}{l(\alpha)l(\beta)} < \frac{1}{l_0^2}$.
\item or $\alpha$ (up to a change in names) is a side, and then:
\begin{enumerate}
\item $Int(\alpha,\beta) \leq 1$ as there is no non-singular intersection,
\item $l(\alpha)l(\beta) \geq l_0^2$ with equality if and only if both $\alpha$ and $\beta$ are sides of a $(2m+1)$-gon.
\end{enumerate}
In particular, we have $\frac{Int(\alpha,\beta)}{l(\alpha)l(\beta)} \leq \frac{1}{l_0^2}$ with equality if and only if both $\alpha$ and $\beta$ are sides of a $(2m+1)$-gon.
\end{enumerate}
This concludes the proof of Proposition \ref{Double2m+1gon}.
\end{proof}


\section{Extension to the Teichm\"uller disc}\label{sec:extension}
In this section, we finally show our main result:
\begin{Theo}\label{main_result}
For any surface $X$ in the Teichmüller disc of the double $(2m+1)$-gon, we have:
\begin{equation}\label{Expression_KVol}
K(X) = K(0,\infty) \sin \theta (X,0,\infty).
\end{equation}
\end{Theo}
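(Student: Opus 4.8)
The plan is to derive the identity from the reformulation $K(X)=\sup_{d\neq d'\in\mathcal P}K(d,d')\,\sin\theta(X,d,d')$ of Proposition~\ref{prop:other:def}, by showing that the supremum is always attained, up to the Veech group, by the pair $(0,\infty)$. The bound $K(X)\geq K(0,\infty)\sin\theta(X,0,\infty)$ is immediate (take $(d,d')=(0,\infty)$ in the supremum), so only the reverse inequality needs work. Since $K(X)$ is invariant under the diagonal action of $\Gamma_n$ whereas $\min_{\gamma\in\Gamma_n}d_{\mathrm{hyp}}(X,\gamma\cdot\gamma_{0,\infty})$ is $\Gamma_n$-invariant too, I would first reduce to $X$ lying in the closure of the fundamental domain $\mathcal D\cup\sigma(\mathcal D)$ of Figure~\ref{fundamendal:domain} ($\sigma$ the reflection in $\gamma_{-1,1}$), and record the elementary hyperbolic fact that there $\gamma_{0,\infty}$ is the $\Gamma_n$-translate of itself closest to $X$, so that $\sin\theta(X,0,\infty)=\max_{\gamma\in\Gamma_n}\sin\theta(X,\gamma\cdot 0,\gamma\cdot\infty)$.

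Next I would split the supremum according to whether $(d,d')\in\Gamma_n\cdot(0,\infty)$. For pairs in the orbit one has $K(d,d')=K(0,\infty)$ by $\Gamma_n$-invariance and $\gamma_{d,d'}$ is a translate of $\gamma_{0,\infty}$, so $K(d,d')\sin\theta(X,d,d')\leq K(0,\infty)\sin\theta(X,0,\infty)$ by the previous paragraph. For pairs $(d,d')\notin\Gamma_n\cdot(0,\infty)$, Proposition~\ref{Examples_K}(ii) gives $K(d,d')\leq K(0,\Phi)=\tfrac1\Phi K(0,\infty)$, which already settles matters whenever $\gamma_{d,d'}$ stays far enough from $X$: since $\cosh d_{\mathrm{hyp}}(X,\gamma_{0,\infty})=\sqrt{x^2+y^2}/y$ is bounded on the fundamental domain, with maximum $1/\sin\tfrac\pi n$ attained exactly at the corner $X_0$, the inequality $K(d,d')\sin\theta(X,d,d')\leq K(0,\infty)\sin\theta(X,0,\infty)$ holds automatically as soon as $\Phi\cosh d_{\mathrm{hyp}}(X,\gamma_{d,d'})\geq 1/\sin\tfrac\pi n$. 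This confines the geodesics $\gamma_{d,d'}$ that could challenge $(0,\infty)$ to a bounded region, so that only finitely many of them are relevant near $X_0$, the most delicate being $\gamma_{0,\Phi}$ and its images under $\Gamma_n$ and under the dihedral symmetry group of the double $n$-gon (for instance the vertical $\gamma_{1,\infty}$ when $n=5$).

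The remaining, genuinely delicate situation is that of competitor geodesics whose banana neighbourhoods come close to $X_0$: there the crude bound of Proposition~\ref{Examples_K}(ii) is no longer enough, and one must either sharpen the bound on $K(d,d')$ (using the cusp/cylinder identity~\eqref{cusp_identity_double} together with the absence of small triangles on $X_0$) or estimate $K(d,d')\sin\theta(X,d,d')$ directly by a geometric analysis of intersections, as in Section~\ref{sec:2m+1-gon}, now carried out on the deformed flat structures. The anchor points are known: at $X_0$ every such term is $\leq K(X_0)=K(0,\infty)\sin\theta(X_0,0,\infty)$ by Corollary~\ref{Victoire_en _X0} (equivalently Corollary~\ref{cor:main}), with \emph{equality} for the $K(0,\Phi)$-competitors --- for example, for $\gamma_{0,\Phi}$ the required inequality $\cosh d_{\mathrm{hyp}}(X,\gamma_{0,\infty})\leq\Phi\cosh d_{\mathrm{hyp}}(X,\gamma_{0,\Phi})$ reduces, after a short computation, to the Euclidean condition $(x-\Phi)^2+y^2\geq 1$, which holds throughout $\overline{\mathcal D}$ and is an equality exactly at $X_0$ --- while along the staircase axis $\gamma_{0,\infty}$ one has $\sin\theta(X,0,\infty)=1$ and every competitor contributes at most $K(0,\infty)/\Phi<K(0,\infty)$. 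One then interpolates over the two-dimensional fundamental domain between these two regimes, reducing --- after using the symmetries to work on the geodesic triangle bounded by $\gamma_{0,\infty}$, $x=\tfrac\Phi2$ and $|z|=1$ --- to one-variable estimates that must be handled with care precisely because the inequality is tight at $X_0$.

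The main obstacle is exactly this interpolation, together with the verification that the list of geodesics that can challenge $(0,\infty)$ near $X_0$ is finite and completely understood: one must keep the estimates sharp enough to accommodate the equality case at the double $n$-gon, where several pairs of sides simultaneously realise $\KVol$, while proving strict domination by $(0,\infty)$ everywhere else on the fundamental domain (hence that the minimum of $\KVol$ on $\mathcal T_n$ is attained only at $X_0$). This is the analytic heart of the section; by comparison the other ingredients --- the reduction to a fundamental domain, the orbit/non-orbit dichotomy, and the ``far competitor'' estimate coming from Proposition~\ref{Examples_K}(ii) --- are routine.
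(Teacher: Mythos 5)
Your setup matches the paper's: you start from Proposition~\ref{prop:other:def}, reduce to the fundamental domain, use the two anchor points (equality on $\gamma_{0,\infty}$ via Proposition~\ref{Examples_K} and at $X_0$ via Corollary~\ref{Victoire_en _X0}), dispose of the $\Gamma_n\cdot(0,\infty)$ orbit by the closest-translate observation, and invoke $K(d,d')\leq K(0,\Phi)=\frac1\Phi K(0,\infty)$ for the rest. Your explicit computation for the single competitor $\gamma_{0,\Phi}$ (the reduction to $(x-\Phi)^2+y^2\geq 1$ via the identity $((x-d)(x-d')+y^2)^2+y^2(d'-d)^2=((x-d)^2+y^2)((x-d')^2+y^2)$) is correct. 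But the proof stops exactly where the theorem's difficulty begins: you yourself label the interpolation over the fundamental domain for near competitors as ``the main obstacle'' and ``the analytic heart,'' and you offer only a menu of possible strategies (``either sharpen the bound \dots or estimate \dots directly'') rather than an argument. That step is the content of the paper's Lemma~\ref{Lemme_configurations} (which, via the Voronoi-type domains $V(d,d')$ and Lemmas~\ref{Lemme_furthest_away} and~\ref{hypothese_dd'}, reduces to pairs with $0\leq d<\frac\Phi2<d'$ and $d+d'>\Phi$) and of Proposition~\ref{minimality}, which shows that the ratio $F_{(d,d')}(X)=\sin\theta(X,0,\infty)/\sin\theta(X,d,d')$ attains its minimum on $\mathcal D_+$ at $X_0$, by a five-step gradient and boundary-monotonicity analysis. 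Without some version of this, the inequality~\eqref{etoile} for near competitors is asserted, not proved.

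A second, more pointed problem is your claim that the crude bound ``confines the geodesics that could challenge $(0,\infty)$ to a bounded region, so that only finitely many of them are relevant near $X_0$.'' The family of geodesics joining two cusps and meeting a fixed neighbourhood of $X_0$ is \emph{infinite} (e.g.\ all $\gamma_{0,d'}$ with $d'$ a cusp close to $\Phi$, none of which are $\Gamma_n$-equivalent to one another in general), and the uniform bound $K(d,d')\leq K(0,\Phi)$ does not decay along such a family; since $\frac{1}{\Phi}>\sin\frac\pi n$ for $n\geq 5$, the crude bound genuinely fails for all of them near $X_0$. So a finite case check cannot close the argument; one needs a statement valid uniformly in $(d,d')$, which is precisely what the minimality of $F_{(d,d')}$ at $X_0$ provides. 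As it stands the proposal is an accurate road map with the central analytic lemma missing.
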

Theorem~\ref{thm:main:intro} follows directly as $\sin \theta(X,0, \infty) = \cfrac{1}{\cosh d_{hyp}(X, \gamma_{0,\infty})}$ by Proposition~\ref{prop:banane}. Before proving Theorem~\ref{main_result}, we show how to deduce Corollary~\ref{cor:main}.

\begin{proof}[Proof of Corollary~\ref{cor:main}]
Since $\Vol(S_0) = \frac{n}{2}\cos \frac{\pi}{n}$ by~\ref{eq:lengths} and the furthest point of $\mathcal D$ from $\gamma_{0,\infty}$ is $X_0$, the corresponding angle $\sin \theta (X_0,0,\infty)$ being equal to $\sin \frac{\pi}{n}$, Equation~\eqref{Expression_KVol} implies
$$
\frac{n}{2}  \cos \frac{\pi}{n} \cdot K(0,\infty)\cdot  \sin \frac{\pi}{n} \leq \KVol(X) \leq \frac{n}{2}  \cos \frac{\pi}{n} \cdot K(0,\infty).
$$
We conclude with Proposition~\ref{Examples_K} and Equation~\eqref{eq:lengths}: $K(0,\infty)= \frac{1}{l(\alpha_m)^2} = \frac{1}{\sin^2 \frac{\pi}{n}}$.

The maximum is achieved precisely when $\sin \theta (X,0,\infty)=1$ {\em i.e.} $X$ belongs to the geodesic $\gamma_{0,\infty}$, namely $X$ is the image of $S_0$ by a diagonal matrix of $\mathrm{SL}_2(\mathbb{R})$. As we have seen  the minimum is achieved uniquely at $X_0$. Finally by Proposition~\ref{Double2m+1gon}, the supremum is achieved by pairs of curves that are (images of) pairs of sides of $X_0$.
\end{proof}
\subsection{Interpolation between the regular n-gon and the staircase model}
Recall that Proposition~\ref{prop:other:def} provides another expression of $\KVol$:
\begin{equation}\label{sup_sinus}
K(X) = \sup_{(d,d')} K(d,d')\sin \theta(X,d,d'),
\end{equation}
where the supremum is taken over all pairs $(d,d')$ of distinct periodic directions. The quantity $K(d,d')$ is invariant under the diagonal action of the Veech group. 
Moreover, we know that Equation~\eqref{Expression_KVol} holds:
\begin{itemize}
\item for $X$ in the geodesic $\gamma_{0,\infty}$ by Proposition \ref{Examples_K}
\item for $X=X_0$ the double $(2m+1)$-gon by Corollary \ref{Victoire_en _X0}.
\end{itemize}

The main idea of the proof of Theorem~\ref{main_result} is to use these two results and interpolate between them to show that Equation~\eqref{Expression_KVol} holds in fact for the whole Teichmüller disc. By symmetry, we can restrict to the surfaces $X$ on the right half of the fundamental domain, that is on $\mathcal{D}_+ = \{x+iy \text{ | } 0\leq x \leq \frac{\Phi}{2} \text{ and } x^2+y^2 \geq 1\}$.
Using Equation~\eqref{sup_sinus}, it suffices to show that for any pair of distinct periodic directions $(d,d')$ one has:
\begin{equation}\label{etoile}
\tag{\ding{168}}
\forall X \in \mathcal{D}_+,\  K(d,d')\sin \theta(X,d,d') \leq  K(0,\infty)\sin \theta(X,0,\infty)
\end{equation}

The proof is divided in two steps:
\begin{enumerate}
\item Show that it suffices to prove~\eqref{etoile} for $0 \leq d < \frac{\Phi}{2} < d'$.
\item Show that~\eqref{etoile} holds under the assumption $0 \leq d < \frac{\Phi}{2} < d'$.
\end{enumerate}
The proof of the first step (Section~\ref{sec:reduction}) involves hyperbolic geometry and Veech group action, while the second step (Section~\ref{sec:ratio}) will be deduced from the study of the function 
$$X \mapsto \frac{\sin \theta(X,0,\infty)}{\sin \theta(X,d,d')}.$$

\subsection{Reduction to convenient geodesics}
\label{sec:reduction}
In this section, we prove that it suffices to verify~\eqref{etoile} for pairs $(d,d')$ with $0 \leq d < \frac{\Phi}{2} < d'$. The main step of the proof is Lemma \ref{Lemme_configurations}.

\begin{Def}
Given a pair of distinct periodic directions $(d,d')$ and its associated geodesic $\gamma_{d,d'}$ on $\HH$, we denote by $V(d,d')$ be the connected component of $\HH \backslash (\Gamma_n .\gamma_{d,d'})$ containing $X_0$.
\end{Def}

\begin{Rema}
If one of the images of $\gamma_{d,d'}$ by the action of the Veech group $\Gamma_n$ passes through $X_0$, then $V(d,d')$ is not well defined. It is convenient, in this case, to set $V(d,d') =\left\lbrace X_0 \right\rbrace $.  Note that  in this case there exists $G \in \Gamma_n$ such that 
$\sin \theta(X_0,G.d,G.d')=1$, so by Equation \eqref{etoile} for $X=X_0$  we have 
\begin{multline*}
K(d,d')\sin \theta(X,d,d') \leq K(d,d')=K(G.d,G.d')\sin \theta(X_0,G.d,G.d') \leq \\ 
\leq K(0,\infty) \sin \theta(X_0,0,\infty) \leq K(0,\infty) \sin \theta(X,0,\infty),
\end{multline*}
therefore, \eqref{etoile} holds for any $X \in \mathcal{D}_+$.
\end{Rema}

\begin{Rema}
Notice that, by definition, the boundary of $V(d,d')$ is made of geodesic segments in the Veech group orbit of $\gamma_{d,d'}$. Since $d$ and $d'$ correspond to directions of cusps, there is a finite number of such segments.
\end{Rema}

\begin{Lem}\label{Lemme_furthest_away}
For any pair of distinct periodic directions $(d,d')$, the furthest point $X_1$ from $\gamma_{0,\infty}$ in the boundary of $V(d,d') \cap \mathcal{D}_+$ is further away from $\gamma_{0,\infty}$ than any point $X\in \mathcal{D}_+$ outside $V(d,d')$. Equivalently
$$\sin \theta(X_1,0,\infty)  \leq \sin \theta(X,0,\infty). $$
\end{Lem}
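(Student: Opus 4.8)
The plan is to pass to the angular coordinate on $\mathcal{D}_+$ and then run a topological extremum argument. First I would set up the dictionary between the three quantities at play. By the computation in the proof of Proposition~\ref{prop:banane}, the level sets of $X\mapsto d_{\mathrm{hyp}}(X,\gamma_{0,\infty})$ are exactly the Euclidean rays issued from $0$, and on $\mathcal{D}_+$ (where $\arg X\in[\pi/n,\pi/2]$) the quantity $d_{\mathrm{hyp}}(X,\gamma_{0,\infty})$ is a strictly decreasing function of $\arg X$. Together with Notation~\ref{nota:angle}, which gives $\sin\theta(X,0,\infty)=1/\cosh d_{\mathrm{hyp}}(X,\gamma_{0,\infty})$, this makes the assertions ``$X_1$ is furthest from $\gamma_{0,\infty}$'', ``$\arg X_1$ is minimal'', and ``$\sin\theta(X_1,0,\infty)$ is minimal'' equivalent, so it suffices to argue with the argument. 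I would also record here that $X_0=e^{i\pi/n}$ realises the minimal argument $\pi/n$ over all of $\mathcal{D}_+$, hence is the globally furthest point, and that $X_0\in V(d,d')$ (the degenerate case $V(d,d')=\{X_0\}$ being dealt with exactly as in the preceding Remark).

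Next I would introduce a point $X^\ast\in\mathcal{D}_+\setminus V(d,d')$ realising $\inf\{\arg X:X\in\mathcal{D}_+\setminus V(d,d')\}$, that is, the furthest point from $\gamma_{0,\infty}$ outside $V(d,d')$. Such a minimiser exists because $d_{\mathrm{hyp}}(X,\gamma_{0,\infty})\to 0$ as $X$ escapes to the cusp at $\infty$ of $\mathcal{D}_+$, so the furthest points lie in a compact lower part $\{\arg\le\pi/2-\varepsilon\}$, and $\mathcal{D}_+\setminus V(d,d')$ is closed (its boundary being a finite union of geodesic arcs, by the Remark). The key reduction is then: if I can show $X^\ast\in\partial V(d,d')$, i.e. $X^\ast$ lies on the orbit $\Gamma_n\cdot\gamma_{d,d'}$, then $X^\ast$ belongs to the boundary of $V(d,d')\cap\mathcal{D}_+$, so by definition of $X_1$ we get $d_{\mathrm{hyp}}(X_1,\gamma_{0,\infty})\ge d_{\mathrm{hyp}}(X^\ast,\gamma_{0,\infty})\ge d_{\mathrm{hyp}}(X,\gamma_{0,\infty})$ for every $X\in\mathcal{D}_+\setminus V(d,d')$; applying $1/\cosh(\cdot)$ and monotonicity yields $\sin\theta(X_1,0,\infty)=1/\cosh d_{\mathrm{hyp}}(X_1,\gamma_{0,\infty})\le 1/\cosh d_{\mathrm{hyp}}(X,\gamma_{0,\infty})=\sin\theta(X,0,\infty)$, which is exactly the stated inequality (note the direction: furthest means smallest sine).

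To locate $X^\ast$ I would first invoke the minimum principle: $\arg z=\mathrm{Im}\,\log z$ is harmonic on $\mathbb{H}^2$, so it attains its minimum over the closed set $\mathcal{D}_+\setminus V(d,d')$ on the topological boundary, which is contained in $\partial\mathcal{D}_+\cup\partial V(d,d')$. It then remains to exclude $X^\ast\in\partial\mathcal{D}_+\setminus\partial V(d,d')$. Here I would slide: since $X_0\in V(d,d')$ we have $X^\ast\ne X_0$, so $X^\ast$ lies on the bottom arc or the right edge of $\mathcal{D}_+$, along which the argument decreases monotonically to $\pi/n$ as one moves toward $X_0$. Sliding $X^\ast$ toward $X_0$ along $\partial\mathcal{D}_+$ stays in $\mathcal{D}_+$ and strictly decreases $\arg$; this path starts outside $V(d,d')$ and ends inside it at $X_0$, so it meets $\partial V(d,d')$ at a first point $Y$. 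As $V(d,d')$ is open, $Y\notin V(d,d')$, hence $Y\in\mathcal{D}_+\setminus V(d,d')$; and since $X^\ast\notin\partial V(d,d')$ in this subcase, $Y$ lies strictly beyond $X^\ast$, so $\arg Y<\arg X^\ast$. This contradicts the minimality of $\arg X^\ast$, forcing $X^\ast\in\partial V(d,d')\cap\mathcal{D}_+$, as needed.

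The main obstacle is precisely this sliding/topological step: one must guarantee that the furthest point outside $V(d,d')$ genuinely sits on the cut-out geodesics rather than on $\partial\mathcal{D}_+$, and that the monotonicity of $\arg$ toward $X_0$ holds on each boundary piece. The remaining points are routine once the angular dictionary is in place: the existence of the minimiser (using that $d_{\mathrm{hyp}}(\cdot,\gamma_{0,\infty})\to 0$ at the cusp, so no competitor arises near $\infty$), the harmonicity of $\arg$, and the degenerate configuration $V(d,d')=\{X_0\}$, which is subsumed by the earlier Remark.
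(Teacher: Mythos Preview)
Your argument is correct, but it takes a genuinely different route from the paper's.

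The paper argues point-by-point: given any $X\in\mathcal{D}_+\setminus V(d,d')$, it exhibits directly a point of the boundary of $V(d,d')\cap\mathcal{D}_+$ that is further from $\gamma_{0,\infty}$. Concretely, it follows the perpendicular $g$ to $\gamma_{0,\infty}$ through $X$: if $g$ meets $V(d,d')$, the first boundary crossing is the desired point; if not, $g$ exits $\mathcal{D}_+$ through $\gamma_{\Phi/2,\infty}$ at some $M$, and the top $K$ of $V(d,d')\cap\gamma_{\Phi/2,\infty}$ lies strictly below $M$ on that vertical line, so $K$ is further than $M$, which in turn is further than $X$. Everything is elementary hyperbolic geometry with no extremum argument.

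Your approach instead globalises: you pick the furthest exterior point $X^\ast$, invoke the minimum principle for the harmonic function $\arg$ to place $X^\ast$ on $\partial\mathcal{D}_+\cup\partial V(d,d')$, and then slide along $\partial\mathcal{D}_+$ toward $X_0$ (where $\arg$ strictly decreases) to force $X^\ast\in\partial V(d,d')$. This is correct; the harmonicity is mild overkill (the fact that $\arg$ has no interior critical points would do), but the sliding step is the genuine content and works as stated. The paper's proof is shorter and more constructive; yours is more systematic and would transplant more readily to other convex fundamental domains, since it only uses that $\arg$ is monotone along the boundary pieces pointing at the anchor $X_0$.
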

\begin{proof}
Take $X \in \mathcal{D}_+ \setminus V(d,d')$ and call $g$ the perpendicular to $\gamma_{0,\infty}$ which contains $X$. If $g$ intersects 
$V(d,d')$ then there is an intersection point in the boundary of  $V(d,d')$, and this point is further away from $\gamma_{0,\infty}$ than $X$
(see Figure \ref{lemme_plus_loin_Voronoi}).
\begin{figure}
	 	\begin{center}
	 \definecolor{rvwvcq}{rgb}{0.08235294117647059,0.396078431372549,0.7529411764705882}

	 \definecolor{wrwrwr}{rgb}{0.3803921568627451,0.3803921568627451,0.3803921568627451}

	 \begin{tikzpicture}[scale=3,line cap=round,line join=round,>=triangle 45,x=1cm,y=1cm]

	 \clip(-1.5,-0.209218106995887) rectangle (2.5,1.974156378600822);

	 \draw [line width=2pt,color=wrwrwr] (0,0) -- (0,1.974156378600822);
	\draw [line width=1pt,color=wrwrwr] (-0.3,0)-- (1,0);

	 \draw [shift={(0,0)},line width=2pt,color=wrwrwr]  plot[domain=0.6283185307179586:1.5707963267948966,variable=\t]({1*1*cos(\t r)+0*1*sin(\t r)},{0*1*cos(\t r)+1*1*sin(\t r)});

	 \draw [line width=2pt,color=wrwrwr] (0.8090169943749475,0.5877852522924731) -- (0.8090169943749475,1.974156378600822);

	 \draw [shift={(0,0)},line width=2pt,color=wrwrwr]  plot[domain=1.0788881528646406:1.5707963267948966,variable=\t]({1*1.7128988675461305*cos(\t r)+0*1.7128988675461305*sin(\t r)},{0*1.7128988675461305*cos(\t r)+1*1.7128988675461305*sin(\t r)});

	 \draw [shift={(0.8090169943749475,0.5877852522924731)},line width=2pt,color=wrwrwr]  plot[domain=1.5707963267948966:2.509919796697609,variable=\t]({1*0.6116502162802975*cos(\t r)+0*0.6116502162802975*sin(\t r)},{0*0.6116502162802975*cos(\t r)+1*0.6116502162802975*sin(\t r)});

	 \draw [fill=rvwvcq] (0.45917695473250913,1.6502057613168712) circle (1pt);

	 \draw[color=rvwvcq] (0.4802469135802456,1.8) node {$X$};

	 \draw [fill=rvwvcq] (0,0) circle (1pt);

	 \draw[color=rvwvcq] (0.17,0.085473251028804516) node {$(0,0)$};

	 \draw [fill=rvwvcq] (0.8090169943749475,0.5877852522924731) circle (1pt);

	 \draw[color=rvwvcq] (0.92,0.6401646090534963) node {$X_0$};

	 \draw [fill=wrwrwr] (0.8090169943749475,1.1994354685727706) circle (1pt);

	 \draw[color=wrwrwr] (0.9,1.2511934156378586) node {$K$};

	 \draw [fill=wrwrwr] (0.8090169943749475,1.5098059588083967) circle (1pt);

	 \draw[color=wrwrwr] (.9,1.561975308641974) node {$M$};

	 \draw[color=wrwrwr] (1.12,0.95)
	  node {$\scriptstyle V(d,d') \cap \mathcal{D}_+$};

	 \end{tikzpicture}
\caption{The highest $K$ point of 
	$V(d,d') \cap \gamma_{\Phi / 2,\infty}$ is further away from $\gamma_{0,\infty}$ than $M$, which is itself further away from $\gamma_{0,\infty}$ than $X$.} \label{lemme_plus_loin_Voronoi}
\end{center}
\end{figure}
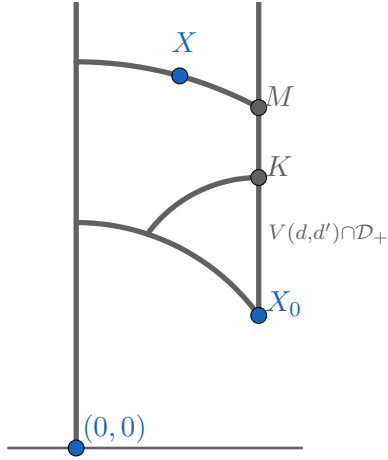
If $g$ does not meet $V(d,d')$, then $g$ intersects the geodesic $\gamma_{\Phi/2,\infty}$ above $V(d,d')$ at $M$. We claim that the highest point $K$ of $$V(d,d') \cap \gamma_{\Phi /2,\infty}$$ is further away from $\gamma_{0,\infty}$ than $M$. Indeed by construction we have the inequality 
$$\sin \theta(M,0,\infty) \geq \sin \theta(K,0,\infty).$$
By Proposition~\ref{prop:banane}, one has $\cosh d_{\mathrm{hyp}}(M,\gamma_{0,\infty}) = \sin^{-1} \theta (M,0,\infty)$. Since $\cosh$ is an increasing function, we deduce
$d_{\mathrm{hyp}}(M,\gamma_{0,\infty}) \leq d_{\mathrm{hyp}}(K,\gamma_{0,\infty})$. Now since $M$ is by construction further away from $\gamma_{0,\infty}$ than $X$, this proves the lemma.
\end{proof}

\begin{Lem}\label{Lemme_configurations}
Let $(d_1,d_1')$ and $(d_2,d_2')$ be two pairs of directions such that the associated geodesics $\gamma_{d_1,d_1'}$ and $\gamma_{d_2,d_2'}$ cross the half fundamental domain $\mathcal{D}_+$. We assume that:
\begin{enumerate}[label=(\roman*)]
\item $K(d_1,d_1') \geq K(d_2,d_2')$.
\item The geodesic $\gamma_{d_2,d_2'}$ lies outside $V(d_1,d_1')$. 
\item \eqref{etoile} holds for any pair of directions whose associated geodesic is in the boundary of $V(d_1,d_1')$.
\end{enumerate}
Then~\eqref{etoile} holds for $(d_2,d_2')$.
\end{Lem}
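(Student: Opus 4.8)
The plan is to fix a surface $X\in\mathcal D_+$ and to prove \eqref{etoile} for $(d_2,d_2')$ by splitting into two cases according to whether or not $X$ lies in $V(d_1,d_1')$. Throughout I would rephrase everything in terms of hyperbolic distances via $\sin\theta(Y,e,e')=1/\cosh d_{\mathrm{hyp}}(Y,\gamma_{e,e'})$ (Notation~\ref{nota:angle}), and I would use two structural facts: that $K$ is invariant under the diagonal action of $\Gamma_n$ while $\gamma_{Ge,Ge'}=G\cdot\gamma_{e,e'}$ for $G\in\Gamma_n$, and that, by the Remark preceding the statement, $\partial V(d_1,d_1')$ is a finite union of geodesic segments, each carried by a complete geodesic of the form $\gamma_{e,e'}$ with $(e,e')\in\Gamma_n\cdot(d_1,d_1')$. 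For such a pair $K(e,e')=K(d_1,d_1')$, so hypothesis (iii) reads: for every boundary geodesic $\gamma_{e,e'}$ of $V(d_1,d_1')$ and every $Y\in\mathcal D_+$, one has $K(d_1,d_1')\sin\theta(Y,e,e')\le K(0,\infty)\sin\theta(Y,0,\infty)$.

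First I would treat the case $X\notin V(d_1,d_1')$ (which is vacuous if $\mathcal D_+\subseteq V(d_1,d_1')$, in which case only the second case occurs). Here Lemma~\ref{Lemme_furthest_away} provides a point $X_1\in\partial V(d_1,d_1')\cap\mathcal D_+$, hence lying on some boundary geodesic $\gamma_{e,e'}$ with $(e,e')\in\Gamma_n\cdot(d_1,d_1')$, such that $\sin\theta(X_1,0,\infty)\le\sin\theta(X,0,\infty)$. Since $X_1\in\gamma_{e,e'}$ we have $\sin\theta(X_1,e,e')=1$, so applying (iii) at $Y=X_1$ gives $K(d_1,d_1')\le K(0,\infty)\sin\theta(X_1,0,\infty)\le K(0,\infty)\sin\theta(X,0,\infty)$. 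Combining this with hypothesis (i) and the trivial bound $\sin\theta(X,d_2,d_2')\le1$ yields
\[
K(d_2,d_2')\sin\theta(X,d_2,d_2')\le K(d_2,d_2')\le K(d_1,d_1')\le K(0,\infty)\sin\theta(X,0,\infty),
\]
that is, \eqref{etoile} at $X$.

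Next I would treat the case $X\in V(d_1,d_1')$. By hypothesis (ii) the geodesic $\gamma_{d_2,d_2'}$ is disjoint from the open set $V(d_1,d_1')$, so if $z\in\gamma_{d_2,d_2'}$ is the foot of the perpendicular from $X$, the segment $[X,z]$ — joining a point of $V(d_1,d_1')$ to a point outside it — must meet $\partial V(d_1,d_1')$, say at $w$. Therefore $d_{\mathrm{hyp}}(X,\gamma_{d_2,d_2'})=d_{\mathrm{hyp}}(X,w)+d_{\mathrm{hyp}}(w,z)\ge d_{\mathrm{hyp}}(X,\partial V(d_1,d_1'))\ge d_{\mathrm{hyp}}(X,\gamma_{e,e'})$, where $\gamma_{e,e'}$ is the complete geodesic carrying the boundary segment nearest to $X$ (so $(e,e')\in\Gamma_n\cdot(d_1,d_1')$). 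Applying the decreasing function $t\mapsto1/\cosh t$ gives $\sin\theta(X,d_2,d_2')\le\sin\theta(X,e,e')$, and then, using hypothesis (i) followed by (iii) at $Y=X$,
\[
K(d_2,d_2')\sin\theta(X,d_2,d_2')\le K(d_1,d_1')\sin\theta(X,e,e')=K(e,e')\sin\theta(X,e,e')\le K(0,\infty)\sin\theta(X,0,\infty),
\]
again \eqref{etoile} at $X$. As $X\in\mathcal D_+$ was arbitrary, this proves the lemma.

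The chain of inequalities is routine; the point that needs care is the geometric input packaged in Lemma~\ref{Lemme_furthest_away}, namely that the extremal boundary point $X_1$ genuinely lies on a geodesic of the orbit $\Gamma_n\cdot\gamma_{d_1,d_1'}$ (and inside $\mathcal D_+$), rather than on the part of $\partial\mathcal D_+$ cutting off $V(d_1,d_1')\cap\mathcal D_+$. I would also dispose of the degenerate configurations noted in the two Remarks preceding the statement — when some $\Gamma_n$-translate of $\gamma_{d_1,d_1'}$ or of $\gamma_{d_2,d_2'}$ passes through $X_0$, so that the relevant region collapses to $\{X_0\}$ — in which case \eqref{etoile} follows at once from its already-established validity at $X_0$.
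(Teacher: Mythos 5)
Your proof is correct and follows essentially the same route as the paper: the same two-case split according to whether $X$ lies in $V(d_1,d_1')$, with Lemma~\ref{Lemme_furthest_away} supplying the boundary point $X_1$ in the first case and the perpendicular-crossing argument in the second. The only cosmetic difference is that you work directly with the boundary geodesic $\gamma_{e,e'}$ and the $\Gamma_n$-invariance of $K$, where the paper phrases the same step as ``up to acting by the Veech group we may assume the crossing occurs at $\gamma_{d_1,d_1'}$''.
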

\begin{proof}
	Pick a point $X \in \mathcal{D}_+$. We subdivide the proof in two cases.\\	
\textbf{First case:} $X \not \in V(d_1,d_1')$. Then, Lemma \ref{Lemme_furthest_away} gives us a point $X_1$ on the boundary of $V(d_1,d_1')$ such that 
\begin{equation}
\label{eq:def:X1}
\sin \theta(X_1,0,\infty)  \leq \sin \theta(X,0,\infty).
\end{equation}
	Note that up to acting by the Veech group, which does not change the conclusion, we may assume that $X_1$ lies on $\gamma_{d_1,d_1'}$ itself, so $\sin \theta(X_1,d_1,d_1')=1$.
	Then 
	\begin{eqnarray*}
	K(d_2,d_2')\sin \theta(X,d_2,d_2') & \leq & K(d_2,d_2') \\
	& \leq & K(d_1,d_1') \text{ by assumption (i) }\\
	& = & K(d_1,d_1')\sin \theta(X_1,d_1,d_1')  \text{ because } \sin \theta(X_1,d_1,d_1') =1\\
	& \leq &  K(0,\infty)\cdot \sin \theta (X_1,0,\infty) \text{ by assumption (iii) }\\
		& \leq &  K(0,\infty)\cdot \sin \theta (X,0,\infty) \text{ by }~\eqref{eq:def:X1}.
	\end{eqnarray*}
		\textbf{Second case:} $X\in V(d_1,d_1')$. Then, by assumption (ii), the perpendicular  to $\gamma_{d_2,d_2'}$ through $X$ crosses the boundary of $V(d_1,d_1')$ before it reaches $\gamma_{d_2,d_2'}$, and again, up to acting by the Veech group we may assume the crossing occurs at $\gamma_{d_1,d_1'}$ so that $\sin \theta(X,d_1,d_1') \geq \sin \theta(X,d_2,d_2')$. Therefore
	\begin{eqnarray*}
	K(d_2,d_2')\sin \theta(X,d_2,d_2') & \leq & K(d_1,d_1')\sin \theta(X,d_2,d_2') \text{ by assumption (i) } \\
	& \leq & K(d_1,d_1')\sin \theta(X,d_1,d_1') \\
	& \leq &  K(0,\infty)\cdot \sin \theta (X,0,\infty) \text{ by assumption (iii),}
	\end{eqnarray*}
which finishes the proof.
\end{proof}

In particular, since we can apply Lemma \ref{Lemme_configurations} when $(d_2,d_2')$ is in the orbit of $(d_1,d_1')$ under the diagonal action of the Veech group, it suffices to prove~\eqref{etoile} for pairs of directions $(d,d')$ such that some segment of $\gamma_{d,d'}$ is in the boundary of $V(d_1, d'_1)$. Since $V(d_1, d'_1)$ is invariant under the dihedral group preserving $X_0$, it suffices to consider segments of the boundary which are contained in $\mathcal{D}_+$. These geodesics satisfy the following property.
\begin{Lem}\label{hypothese_dd'}
If $\gamma_{d,d'}$ is a geodesic whose closest point to $X_0$ lies  in  $\mathcal{D}_+$, then $\gamma_{d,d'}$  intersects  the geodesic $\gamma_{\Phi / 2, \infty}$. In particular, we can assume $d < \frac{\Phi}{2} < d'$. Moreover, the direction of the tangent vector of $\gamma_{d,d'}$ at the intersection point lies in the first quadrant, in particular $d+d'>\Phi$.
\end{Lem}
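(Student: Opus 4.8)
The plan is to argue by elimination, using the concrete description of $\mathcal{D}_+$ as the hyperbolic triangle with ideal vertex $\infty$ and finite vertices $i$ and $X_0=\cos\frac{\pi}{n}+i\sin\frac{\pi}{n}$, whose three sides lie respectively on $\gamma_{0,\infty}$, on $\gamma_{\Phi/2,\infty}$ and on the unit half-circle $\gamma_{-1,1}$. In particular $X_0$ is the vertex of $\mathcal{D}_+$ lying on $\gamma_{\Phi/2,\infty}$, and $\mathcal{D}_+\subset\{z:0\le\mathrm{Re}\,z\le\Phi/2\text{ and }|z|\ge1\}$. Let $P$ be the foot of the perpendicular dropped from $X_0$ onto $\gamma:=\gamma_{d,d'}$, so that $P\in\mathcal{D}_+$ by hypothesis. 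If $X_0\in\gamma$ then \eqref{etoile} already holds, by the Remark preceding the lemma (where $V(d,d')=\{X_0\}$), so there is nothing to prove; hence we assume from now on $X_0\notin\gamma$, and in particular $P\ne X_0$.

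First I would dispose of the case where $\gamma=\gamma_{a,\infty}$ is a vertical geodesic. Then $P$ is the apex $a+i\,|X_0-a|$ of the Euclidean half-circle of centre $a$ through $X_0$, so $|P|^2=a^2+|X_0-a|^2=2a^2-\Phi a+1$; the requirement $|P|\ge1$ gives $a(2a-\Phi)\ge0$, i.e. $a\le0$ or $a\ge\Phi/2$, which together with $0\le\mathrm{Re}\,P=a\le\Phi/2$ leaves only $a\in\{0,\Phi/2\}$. The value $a=\Phi/2$ gives $X_0\in\gamma$, excluded; the value $a=0$ gives $(d,d')=(0,\infty)$, for which \eqref{etoile} is an equality (and likewise for every $\Gamma_n$-translate). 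So from now on $\gamma=\gamma_{d,d'}$ is a Euclidean half-circle with $d<d'$ real, of centre $c=\frac{d+d'}{2}$ and radius $\rho=\frac{d'-d}{2}$.

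Next I would show that $\gamma$ crosses $\gamma_{\Phi/2,\infty}$. Since $\mathrm{Re}\,P\le\Phi/2$, the half-circle $\gamma$ is not contained in $\{\mathrm{Re}\,z>\Phi/2\}$; if it missed $\gamma_{\Phi/2,\infty}$ altogether it would lie in $\{\mathrm{Re}\,z\le\Phi/2\}$, i.e. $d'\le\Phi/2$. Writing $z=c+\rho e^{i\psi}$ one has $|z|^2=\rho^2-c^2+2c\,\mathrm{Re}\,z=-dd'+2c\,\mathrm{Re}\,z$, an affine function of $\mathrm{Re}\,z$ whose values sweep the open interval with endpoints $d^2$ and $d'^{\,2}$; since $\gamma$ meets $\mathcal{D}_+\subset\{|z|\ge1,\ \mathrm{Re}\,z\ge0\}$, this forces $d'>0$ and $\max(|d|,|d'|)>1$, hence $d<-1$ because $d'\le\Phi/2<1$. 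One is thus reduced to excluding the configuration $d<-1<0<d'\le\Phi/2$. In that case $\gamma$ crosses $\gamma_{0,\infty}$, and (its endpoints being linked with $\{-1,1\}$) also $\gamma_{-1,1}$, at a point $R$ with $|R|=1$; moreover $\gamma\cap\mathcal{D}_+$ is the sub-arc $\sigma$ of $\gamma$ joining $S:=\gamma\cap\gamma_{0,\infty}$ to $R$, on which, by the affine formula above, $|z|^2$ is monotone in $\mathrm{Re}\,z$. Now $P$ cannot lie in $\sigma$: one has $P\ne R$, since $P=R$ would make the geodesic through $X_0$ and $R$, namely $\gamma_{-1,1}$, perpendicular to $\gamma$ at $R$, which requires $dd'=1$, impossible as $dd'<0$; and a short computation — or the strict convexity of $z\mapsto d_{\mathrm{hyp}}(z,X_0)$ along $\gamma$ together with the fact that $X_0\in\gamma_{-1,1}$ lies on the side of $R$ pointing away from $\sigma$ — shows that $P$ lies strictly on the $d'$-side of $R$, so $|P|<1$ and $P\notin\mathcal{D}_+$, a contradiction. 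Therefore $\gamma$ meets $\gamma_{\Phi/2,\infty}$, i.e. $d<\Phi/2<d'$.

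Finally, set $Q=\gamma\cap\gamma_{\Phi/2,\infty}$, so $\mathrm{Re}\,Q=\Phi/2$. As $\mathrm{Re}\,z$ is monotone along $\gamma$ from $d$ to $d'$, the $(d\to d')$-oriented tangent of $\gamma$ at $Q$ points into the first quadrant exactly when $Q$ lies on the ascending half of the half-circle, i.e. $\mathrm{Re}\,Q=\Phi/2<c$, equivalently $d+d'>\Phi$; so it remains to rule out $c\le\Phi/2$. In that case the apex $c+i\rho$ of $\gamma$ has $\mathrm{Re}\le\Phi/2$, and the vertical geodesic $\{\mathrm{Re}\,z=c\}$, which is perpendicular to $\gamma$ at the apex, separates $X_0$ (with $\mathrm{Re}=\Phi/2\ge c$) from the arc of $\gamma$ with $\mathrm{Re}<c$; hence $P$ lies on the descending half of $\gamma$, and a comparison analogous to the one in the previous step (using $\mathrm{Re}\,P\le\Phi/2$ together with the position of $Q$ and of $X_0$ on the line $\mathrm{Re}\,z=\Phi/2$) forces $\mathrm{Re}\,P=\Phi/2$, hence $P\in\gamma_{\Phi/2,\infty}$; reasoning as before, this gives $P=X_0\in\gamma$, a contradiction. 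Thus $c>\Phi/2$, which completes the proof.

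I expect the main obstacle to be the two position arguments for the perpendicular foot $P$ — the elimination of the configuration $d<-1<0<d'\le\Phi/2$ in the third paragraph, and the analogous step $c>\Phi/2$ in the last — since locating $P$ precisely enough to see that it escapes $\mathcal{D}_+$ is the one point requiring a genuine (though still elementary) hyperbolic-geometry computation rather than bookkeeping with Euclidean half-circles.
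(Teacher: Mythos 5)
Your first three paragraphs are sound: the treatment of vertical geodesics, the reduction to $d<-1<0<d'\le\Phi/2$ via the monotonicity of $|z|^2$ along the half-circle, and the elimination of that configuration by showing that the derivative of $z\mapsto d_{\mathrm{hyp}}(z,X_0)$ along $\gamma$ at $R$ points towards $d'$ (the ``short computation'' you defer reduces to $dd'<1$, which holds since $dd'<0$) are all correct, and together they give $d<\Phi/2<d'$. The problem is the last paragraph. The strict inequality $c>\Phi/2$, i.e.\ $d+d'>\Phi$, is actually \emph{false} as stated: take $(d,d')=(0,\Phi)$, so $c=\Phi/2$, $\rho=\Phi/2$. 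The perpendicular from $X_0=\Phi/2+i\sin\frac{\pi}{n}$ to this geodesic is the vertical line $x=\Phi/2$ (it passes through the centre of the circle, hence is orthogonal to it), so the foot is the apex $\Phi/2+i\Phi/2$, which satisfies $|\,\Phi/2+i\Phi/2\,|^2=\Phi^2/2>1$ and therefore lies in $\mathcal{D}_+$; yet $d+d'=\Phi$. Your final step ``reasoning as before, this gives $P=X_0$'' is exactly where this escapes: when $c=\Phi/2$ the line $x=\Phi/2$ \emph{is} perpendicular to $\gamma$ at $Q$, so $P=Q\neq X_0$ is perfectly consistent and no contradiction arises. (This is a defect of the Lemma's statement rather than a repairable gap in your argument: the correct conclusion is $d+d'\ge\Phi$, i.e.\ the tangent lies in the closed first quadrant, and indeed the paper only ever uses the weak form, writing $d'\ge\Phi-d$ in the subsequent reduction.)

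There is a second, independent hole in the same paragraph. For $c<\Phi/2$ your ``comparison at $Q$'' only forces $\mathrm{Re}\,P=\Phi/2$ when $X_0$ lies \emph{inside} the circle $\gamma$ (equivalently, when $Q$ lies above $X_0$ on the line $x=\Phi/2$, so that the gradient of the distance function at $Q$ points upward and the derivative along $\gamma$ towards $d'$ is $\le 0$). When $X_0$ lies outside $\gamma$ the sign reverses, the comparison at $Q$ only yields $\mathrm{Re}\,P<\Phi/2$, and the reason $P$ escapes $\mathcal{D}_+$ is not $\mathrm{Re}\,P=\Phi/2$ but $|P|<1$ — which requires a separate argument (either the same derivative computation at the second crossing of $\gamma$ with the unit circle, or a direct computation: writing $A=|X_0-c|^2=1-\Phi c+c^2$ and $B=\rho^2$, one checks that $|P|^2-1$ has the sign of a quadratic in $B$ that is negative at $B=0$ and vanishes at $B=A$, hence is negative for $0<B<A$). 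So your case analysis, once corrected to aim at $c\ge\Phi/2$, still needs this extra step to be complete. For comparison, the paper disposes of the tangent-vector claim by invoking the convexity of $V(d,d')$ and its symmetry about $\gamma_{\Phi/2,\infty}$, which sidesteps the foot-of-perpendicular computation entirely (at the cost of leaning on a figure).
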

\begin{proof}
If $\gamma_{d,d'}$ does not intersect  the geodesic $\gamma_{\Phi / 2, \infty}$, then the perpendicular projection of $X_0$ to $\gamma_{d,d'}$ lies below $X_0$, hence not in $\mathcal{D}$, see Figure~\ref{lemme_tangent_vector_intersection_bord_D} (left part). The statement about the tangent vector at the intersection follows from the convexity of $V(d, d')$ and its symmetry with respect to $\gamma_{\Phi / 2, \infty}$. See Figure~\ref{lemme_tangent_vector_intersection_bord_D} (right part).
\end{proof}
 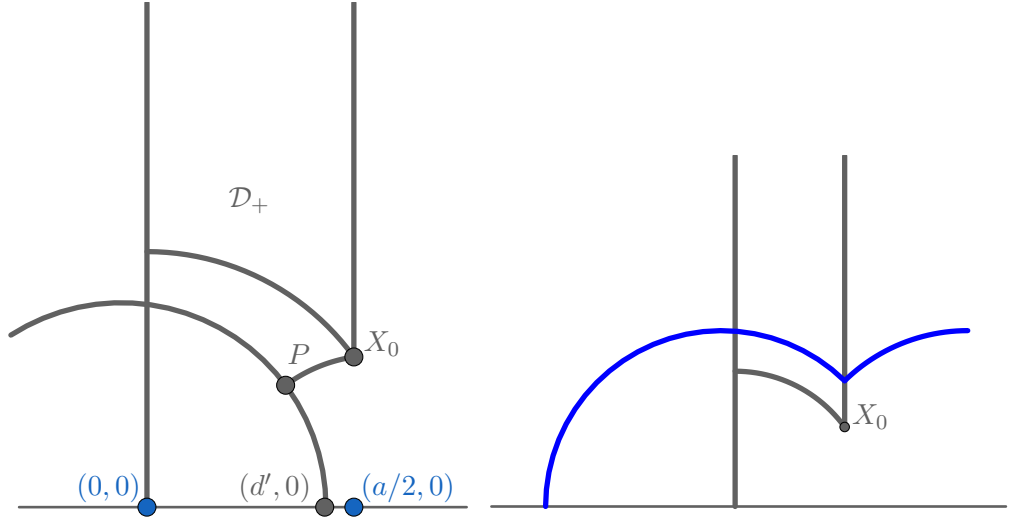
\begin{figure}[h]
 \hskip -55mm
 	\begin{minipage}[b]{0.1\linewidth}
		\begin{center}
\definecolor{rvwvcq}{rgb}{0.08235294117647059,0.396078431372549,0.7529411764705882}

\definecolor{wrwrwr}{rgb}{0.3803921568627451,0.3803921568627451,0.3803921568627451}

\begin{tikzpicture}[line cap=round,line join=round,>=triangle 45,x=1cm,y=1cm, scale=3.4]

\clip(-0.7,-0.209218106995887) rectangle (1.5,1.974156378600822);

\draw [line width=2pt,color=wrwrwr] (0,0) -- (0,1.974156378600822);

\draw [shift={(0,0)},line width=2pt,color=wrwrwr]  plot[domain=0.6283185307179586:1.5707963267948966,variable=\t]({1*1*cos(\t r)+0*1*sin(\t r)},{0*1*cos(\t r)+1*1*sin(\t r)});

\draw [line width=2pt,color=wrwrwr] (0.8090169943749475,0.5877852522924731) -- (0.8090169943749475,1.974156378600822);
\draw [line width=1pt,color=wrwrwr] (-0.5,0)-- (1.25,0);

\draw [shift={(-0.1,0)},line width=2pt,color=wrwrwr]  plot[domain=0:2.141592653589793,variable=\t]({1*0.8*cos(\t r)+0*0.8*sin(\t r)},{0*0.8*cos(\t r)+1*0.8*sin(\t r)});

\draw [shift={(0.8965728969241166,0)},line width=2pt,color=wrwrwr]  plot[domain=1.7186680249362207:2.209698132335832,variable=\t]({1*0.5942705939921021*cos(\t r)+0*0.5942705939921021*sin(\t r)},{0*0.5942705939921021*cos(\t r)+1*0.5942705939921021*sin(\t r)});

\draw [fill=rvwvcq] (0,0) circle (1pt);

\draw[color=rvwvcq] (-0.15,0.075473251028804516) node {$(0,0)$};

\draw [fill=wrwrwr] (0.8090169943749475,0.5877852522924731) circle (1pt);

\draw[color=wrwrwr] (0.92,0.6401646090534963) node {$X_0$};

\draw [fill=rvwvcq] (0.8090169943749475,0) circle (1pt);

\draw[color=rvwvcq] (1.02,0.075473251028804516) node {$(a/2,0)$};

\draw [fill=wrwrwr] (0.542200888640796,0.477051379443527) circle (1pt);

\draw[color=wrwrwr] (0.5942705939921021,0.6) node {$P$};

\draw [fill=wrwrwr] (0.695,0) circle (1pt);

\draw[color=wrwrwr] (0.5,0.0754) node {$(d',0)$};

\draw[color=wrwrwr] (0.4,1.2) node {$\mathcal{D}_+$};

\end{tikzpicture}
  \end{center}
\end{minipage}
\hskip 50mm
\begin{minipage}[b]{0.1\linewidth}

  \definecolor{qqqqff}{rgb}{0,0,1}

  \definecolor{wrwrwr}{rgb}{0.3803921568627451,0.3803921568627451,0.3803921568627451}

  \begin{tikzpicture}[line cap=round,line join=round,>=triangle 45,x=1cm,y=1cm, scale=1.8]
  \clip(-2.0838338230639577,-0.4) rectangle (3.064849755654652,2.591909856719528);

  \draw [line width=2pt,color=wrwrwr] (0,0) -- (0,2.591909856719528);
\draw [line width=1pt,color=wrwrwr] (-1.8,0)-- (2,0);

  \draw [shift={(0,0)},line width=2pt,color=wrwrwr]  plot[domain=0.6283185307179586:1.5707963267948966,variable=\t]({1*1*cos(\t r)+0*1*sin(\t r)},{0*1*cos(\t r)+1*1*sin(\t r)});

  \draw [line width=2pt,color=wrwrwr] (0.8090169943749475,0.5877852522924731) -- (0.8090169943749475,2.591909856719528);

  \draw [shift={(-0.1,0)},line width=2pt,color=qqqqff]  plot[domain=0.7964571147187204:3.141592653589793,variable=\t]({1*1.3*cos(\t r)+0*1.3*sin(\t r)},{0*1.3*cos(\t r)+1*1.3*sin(\t r)});

  \draw [shift={(1.718033988749895,0)},line width=2pt,color=qqqqff]  plot[domain=1.5707963267948966:2.345135538871073,variable=\t]({1*1.3*cos(\t r)+0*1.3*sin(\t r)},{0*1.3*cos(\t r)+1*1.3*sin(\t r)});

  \draw [fill=wrwrwr] (0.8090169943749475,0.5877852522924731) circle (1pt);

  \draw[color=wrwrwr] (1,0.6636155860364631) node {$X_0$};
  \end{tikzpicture}
  \end{minipage}
 \caption{Left: when $d'< \Phi/2$, the orthogonal projection of $X_0$ to $\gamma_{d,d'}$ does not lie in $\mathcal{D}$. Right: when the tangent vector to $\gamma_{d,d'}$ at the intersection with the right boundary of $\mathcal{D}$ does not lie in the first quadrant, $V(X_0,d, d')$ is not convex.} \label{lemme_tangent_vector_intersection_bord_D}
\end{figure}

In particular, to prove Theorem \ref{main_result} it suffices to show that~\eqref{etoile} holds for pairs $(d,d')$ with $d<\frac{\Phi}{2}<d'$ and $d+d'>\Phi$. We distinguish two cases:
\begin{enumerate}
\item $d\geq 0$
\item $d <0$
\end{enumerate}
In fact, case 1 is more difficult and will be proven in the next section. However, case 2 can be directly deduced from case 1. Indeed, by Lemma \ref{hypothese_dd'}, if $d <0$ then $d' \geq \Phi-d > \Phi$. In particular, $\gamma_{d,d'}$ lies outside $V(0,\Phi)$, whose boundary is made of the geodesic segments that are images of $\gamma_{0,\Phi}$ by the rotation around $X_0$, in particular $\gamma_{0,\Phi} \cap \mathcal{D}$ is the only boundary of $V(0,\Phi)$ intersecting $\mathcal{D}_+$; see Figure \ref{Voronoi_0_Phi} for the double pentagon. Since $K(d,d')\leq K(0,\Phi)$ (by Proposition~\ref{Examples_K}) and the pair $(0,\Phi)$ satisfies~\eqref{etoile} by case 1, we conclude by Lemma \ref{Lemme_configurations} that $(d,d')$ satisfies~\eqref{etoile}. This shows that case 1 implies case 2.

\begin{figure}[h]
\center
\definecolor{xdxdff}{rgb}{0.49019607843137253,0.49019607843137253,1}
\definecolor{qqqqff}{rgb}{0,0,1}
\definecolor{ccqqqq}{rgb}{0.8,0,0}
\begin{tikzpicture}[line cap=round,line join=round,>=triangle 45,x=5cm,y=5cm, scale = 0.8]
\clip(-0.3292891017575395,-0.191732664963307) rectangle (1.9,1.7);
\draw [line width=1pt] (0,0) -- (0,1.9606494654983015);
\draw [line width=1pt] (0.8090169943749475,0) -- (0.8090169943749475,1.9606494654983015);
\draw [line width=1pt,color=ccqqqq] (0.6180339887498948,0) -- (0.6180339887498948,1.9606494654983015);
\draw [shift={(0.8090169943749475,0)},line width=1pt,color=ccqqqq]  plot[domain=0:3.141592653589793,variable=\t]({1*0.8090169943749475*cos(\t r)+0*0.8090169943749475*sin(\t r)},{0*0.8090169943749475*cos(\t r)+1*0.8090169943749475*sin(\t r)});
\draw [line width=1pt] (1.618033988749895,0) -- (1.618033988749895,1.9606494654983015);
\draw [line width=1pt,color=ccqqqq] (1,0) -- (1,1.9606494654983015);
\draw [shift={(0.8090169943749475,0)},line width=1pt,dash pattern=on 3pt off 3pt]  plot[domain=0:3.141592653589793,variable=\t]({1*0.19098300562505255*cos(\t r)+0*0.19098300562505255*sin(\t r)},{0*0.19098300562505255*cos(\t r)+1*0.19098300562505255*sin(\t r)});
\draw [shift={(0.6180339887498948,0)},line width=1pt,dash pattern=on 3pt off 3pt]  plot[domain=0:3.141592653589793,variable=\t]({1*0.6180339887498948*cos(\t r)+0*0.6180339887498948*sin(\t r)},{0*0.6180339887498948*cos(\t r)+1*0.6180339887498948*sin(\t r)});
\draw [shift={(1,0)},line width=1pt,dash pattern=on 3pt off 3pt]  plot[domain=0:3.141592653589793,variable=\t]({1*0.6180339887498949*cos(\t r)+0*0.6180339887498949*sin(\t r)},{0*0.6180339887498949*cos(\t r)+1*0.6180339887498949*sin(\t r)});
\draw [shift={(0.3090169943749474,0)},line width=1pt,dash pattern=on 3pt off 3pt]  plot[domain=0:3.141592653589793,variable=\t]({1*0.3090169943749474*cos(\t r)+0*0.3090169943749474*sin(\t r)},{0*0.3090169943749474*cos(\t r)+1*0.3090169943749474*sin(\t r)});
\draw [shift={(1.3090169943749475,0)},line width=1pt,dash pattern=on 3pt off 3pt]  plot[domain=0:3.141592653589793,variable=\t]({1*0.30901699437494745*cos(\t r)+0*0.30901699437494745*sin(\t r)},{0*0.30901699437494745*cos(\t r)+1*0.30901699437494745*sin(\t r)});
\draw [shift={(0.5,0)},line width=1pt,color=ccqqqq]  plot[domain=0:3.141592653589793,variable=\t]({1*0.5*cos(\t r)+0*0.5*sin(\t r)},{0*0.5*cos(\t r)+1*0.5*sin(\t r)});
\draw [shift={(1.118033988749895,0)},line width=1pt,color=ccqqqq]  plot[domain=0:3.141592653589793,variable=\t]({1*0.5*cos(\t r)+0*0.5*sin(\t r)},{0*0.5*cos(\t r)+1*0.5*sin(\t r)});
\draw [line width=1pt,domain=-0.7292891017575395:2.9610233706005773] plot(\x,{(-0-0*\x)/1.618033988749895});
\draw (-0.05259171217934269,0) node[anchor=north west] {$0$};
\draw (0.569465515507447,0) node[anchor=north west] {$\frac{1}{\Phi}$};
\draw (0.7544014480629789,0) node[anchor=north west] {$\frac{\Phi}{2}$};
\draw (0.9561497381235594,0) node[anchor=north west] {$1$};
\draw (1.5529884295527763,0) node[anchor=north west] {$\Phi$};
\draw (1.1789134750654502,0) node[anchor=north west] {$\frac{2}{\Phi}$};
\draw (0.2584369016640521,0) node[anchor=north west] {$\frac{\Phi^2-2}{\Phi}$};
\draw [shift={(0,0)},line width=1pt,dash pattern=on 3pt off 3pt]  plot[domain=0:0.6283185307179588,variable=\t]({1*1*cos(\t r)+0*1*sin(\t r)},{0*1*cos(\t r)+1*1*sin(\t r)});
\draw [shift={(0,0)},line width=1pt]  plot[domain=0.6283185307179587:1.5707963267948966,variable=\t]({1*1*cos(\t r)+0*1*sin(\t r)},{0*1*cos(\t r)+1*1*sin(\t r)});
\draw [shift={(1.618033988749895,0)},line width=1pt]  plot[domain=1.5707963267948966:2.5132741228718345,variable=\t]({1*1*cos(\t r)+0*1*sin(\t r)},{0*1*cos(\t r)+1*1*sin(\t r)});
\draw [shift={(1.618033988749895,0)},line width=1pt,dash pattern=on 3pt off 3pt]  plot[domain=2.513274122871835:3.141592653589793,variable=\t]({1*1*cos(\t r)+0*1*sin(\t r)},{0*1*cos(\t r)+1*1*sin(\t r)});
\begin{scriptsize}
\draw [fill=qqqqff] (0.8090169943749472,0.5877852522924731) circle (4pt);
\draw[color=qqqqff] (0.865,0.75) node {\large $X_{0}$};
\draw [fill=qqqqff] (0,0.9955555555555539) circle (3pt);
\draw[color=qqqqff] (0.06,1.06) node {\large $S_{0}$};
\end{scriptsize}
\end{tikzpicture}
\caption{The domain $V(0, \Phi)$ for the double pentagon.}
\label{Voronoi_0_Phi}
\end{figure}
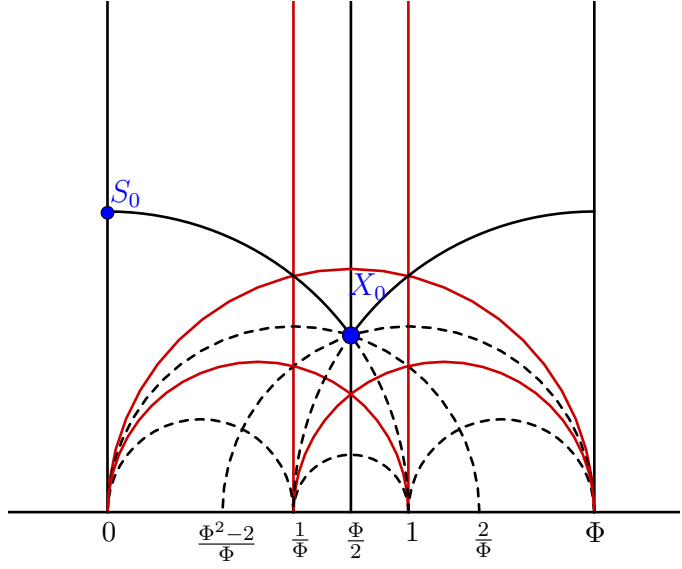

\subsection{Study of the ratio of sines}
\label{sec:ratio}
In this section we show that any pair of periodic directions $(d,d')$ in case 1 (i.e $0\leq d < \frac{\Phi}{2} < d'$) satisfies~\eqref{etoile}. Our proof relies on the study of the function 
\begin{equation*}
F_{(d,d')}: X \mapsto \frac{\sin \theta(X,0,\infty)}{\sin \theta(X,d,d')}.
\end{equation*}
More precisely:

\begin{Prop}\label{minimality}
Under the assumption $0\leq d < \frac{\Phi}{2} < d'$, the function $F_{(d,d')}$ on $\mathcal{D}_+$ is minimal at $X_0$.
\end{Prop}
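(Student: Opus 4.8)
The plan is to turn the minimization of $F_{(d,d')}$ over $\mathcal{D}_+$ into a function‑theoretic problem. Using Notation~\ref{nota:angle}, $F_{(d,d')}(z)=\cosh d_{\mathrm{hyp}}(z,\gamma_{d,d'})/\cosh d_{\mathrm{hyp}}(z,\gamma_{0,\infty})$, and inserting the standard formula for the distance to a geodesic of $\HH$, namely $\cosh d_{\mathrm{hyp}}(z,\gamma_{a,b})=|z-a|\,|z-b|/\big((b-a)\,\mathrm{Im}\,z\big)$ for real endpoints $a<b$ together with the degenerate case $\cosh d_{\mathrm{hyp}}(z,\gamma_{0,\infty})=|z|/\mathrm{Im}\,z$, gives
\[
F_{(d,d')}(z)=\frac{|z-d|\,|z-d'|}{(d'-d)\,|z|}=\frac{|h(z)|}{d'-d},\qquad h(z):=\frac{(z-d)(z-d')}{z}.
\]
Since $0\le d<\Phi/2<d'$, the zeros $d,d'$ of $h$ lie on $\partial\HH$, so $h$ is holomorphic and zero‑free on $\HH$; hence $\log|h|$ is harmonic, $F_{(d,d')}$ has no local minimum in the interior of $\mathcal{D}_+$, and since $F_{(d,d')}(z)\to\infty$ as $\mathrm{Im}\,z\to\infty$ inside $\mathcal{D}_+$ (where the real part stays bounded), the minimum of $F_{(d,d')}$ over $\overline{\mathcal{D}_+}$ is attained on $\partial\mathcal{D}_+$. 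It thus suffices to prove $F_{(d,d')}\ge F_{(d,d')}(X_0)$ along $\partial\mathcal{D}_+$, which splits into the unit‑circle arc $\mathcal{A}=\{e^{i\psi}:\pi/n\le\psi\le\pi/2\}$ from $X_0=e^{i\pi/n}$ to $S_0=i$, the ray $\ell_0=\{iy:y\ge1\}$ on $\gamma_{0,\infty}$, and the ray $\ell_1=\{\Phi/2+iy:y\ge\sin(\pi/n)\}$ on $\gamma_{\Phi/2,\infty}$ (and note $F_{(d,d')}(X_0)=|X_0-d|\,|X_0-d'|/(d'-d)$ because $|X_0|=1$).

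Next I would handle the three arcs by one‑variable calculus. On $\mathcal{A}$, $|z-d|^2=1+d^2-2d\cos\psi$ and $|z-d'|^2=1+d'^2-2d'\cos\psi$ are decreasing in $\cos\psi$ since $d,d'\ge0$, so $F_{(d,d')}$ decreases from $S_0$ to $X_0$; in particular $F_{(d,d')}|_{\mathcal{A}}\ge F_{(d,d')}(X_0)$ and $F_{(d,d')}(S_0)\ge F_{(d,d')}(X_0)$. On $\ell_1$, writing $t=y^2$, $a=(\Phi/2-d)^2$, $b=(d'-\Phi/2)^2$, $c=\Phi^2/4$, we have $F_{(d,d')}^2=(a+t)(b+t)/\big((d'-d)^2(c+t)\big)$, whose $t$‑derivative has the sign of $(t+c)^2-(c-a)(c-b)$; since $c-a=d(\Phi-d)$ and $c-b=d'(\Phi-d')$ both lie in $[0,c]$, this is $\ge0$, so $F_{(d,d')}$ increases in $y$ along $\ell_1$, giving $F_{(d,d')}|_{\ell_1}\ge F_{(d,d')}(X_0)$. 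On $\ell_0$, $F_{(d,d')}^2=(d^2+y^2)(d'^2+y^2)/\big((d'-d)^2y^2\big)$ is minimal over $y>0$ at $y=\sqrt{dd'}$, with value $\big((d+d')/(d'-d)\big)^2$; if $dd'\le1$ this point lies below $\ell_0$, so $F_{(d,d')}$ is increasing along $\ell_0$ and $F_{(d,d')}|_{\ell_0}\ge F_{(d,d')}(S_0)\ge F_{(d,d')}(X_0)$, while if $dd'>1$ one is left with the single scalar inequality
\[
(d+d')^2\ \ge\ (1-\Phi d+d^2)(1-\Phi d'+d'^2),
\]
which is exactly $\min_{\ell_0}F_{(d,d')}\ge F_{(d,d')}(X_0)$.

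The main obstacle is this last inequality, which fails for general $0\le d<\Phi/2<d'$ but holds in the regime $dd'>1$ we are in. The key remark is that $dd'>1$ together with $d<\Phi/2<1$ already forces $d'>1/d$, hence $d+d'>d+1/d>\Phi/2+2/\Phi\ge\Phi$ (using $\Phi<2$). Setting $P=d+d'>\Phi$ and $Q=dd'$, so that $Q\in(1,\tfrac{\Phi}{4}(2P-\Phi)]$ (the upper bound coming from $d<\Phi/2$), the inequality reads $g(Q)\ge0$ for the concave quadratic $g(Q)=-Q^2+(2-\Phi^2+\Phi P)\,Q+\Phi P-1$; since $g(0)=\Phi P-1>\Phi^2-1>0$, it suffices to check $g$ at the right endpoint of the interval, which reduces to $4(\Phi P)^2+8(\Phi P)(4-\Phi^2)+3\Phi^4-8\Phi^2-16\ge0$. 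This expression is increasing in $\Phi P$ (as $\Phi<2$) and at the extreme value $\Phi P=\Phi^2$ equals $-\Phi^4+24\Phi^2-16$, which is positive for every $n\ge5$ since then $\Phi^2=4\cos^2(\pi/n)\in[(3+\sqrt5)/2,\,4)$ lies strictly inside $(12-8\sqrt2,\,12+8\sqrt2)$, the interval where $u\mapsto-u^2+24u-16$ is positive. This completes the boundary analysis, hence the proof. Everything except the final inequality is soft (a conformal rewriting plus one‑variable calculus on each boundary arc); the crux is the self‑improvement $dd'>1\Rightarrow d+d'>\Phi$, which is exactly what powers the closing estimate.
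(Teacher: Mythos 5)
Your proof is correct, and it reaches the paper's conclusion by a partly different route. The global skeleton is the same (rule out an interior minimum, then minimize over the three boundary arcs of $\mathcal{D}_+$), and your formula $F_{(d,d')}(z)=\frac{|z-d|\,|z-d'|}{(d'-d)|z|}$ is exactly the paper's expression for $F_{(d,d')}$ in rectangular coordinates, just written in factored form. The two genuine divergences are these. First, for the interior you observe that $\log F_{(d,d')}=\log|h|+\mathrm{const}$ with $h(z)=(z-d)(z-d')/z$ holomorphic and zero-free on $\HH$, so harmonicity kills interior minima in one line; the paper instead computes $\nabla\log F_{(d,d')}$ as a difference of $\tanh(\mathrm{dist})\cdot\nabla(\mathrm{dist})$ terms and argues it never vanishes. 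Your version is cleaner and buys the boundary reduction for free. Second, on the left boundary $\{iy:y\ge 1\}$ the paper simply notes that the gradient points out of $\mathcal{D}_+$ (since $d\ge 0$), so no point there can be a minimum; you instead minimize $F_{(d,d')}$ along that ray explicitly, which in the regime $dd'>1$ forces you to prove the extra scalar inequality $(d+d')^2\ge(1-\Phi d+d^2)(1-\Phi d'+d'^2)$. I checked your reduction to $g(Q)=-Q^2+(2-\Phi^2+\Phi P)Q+\Phi P-1\ge 0$ on $(1,\tfrac{\Phi}{4}(2P-\Phi)]$, the self-improvement $dd'>1\Rightarrow P>\Phi$, and the endpoint evaluation culminating in $-\Phi^4+24\Phi^2-16>0$ for $\Phi^2\in[(3+\sqrt5)/2,4)$: all correct, but this is considerably more work than the paper's one-sentence argument, which you could substitute to shorten the proof. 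The remaining arcs match the paper's Steps 4 and 5 (your $(t+c)^2-(c-a)(c-b)$ is exactly the paper's polynomial $P(X)$ at $x=\Phi/2$). One small imprecision: on the right boundary you assert that $c-b=d'(\Phi-d')$ lies in $[0,c]$, which fails when $d'>\Phi$ (it is then negative); the conclusion $(t+c)^2\ge(c-a)(c-b)$ still holds in that case because the product $(c-a)(c-b)$ is then nonpositive, so the gap is only in the stated justification, not in the argument.
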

Before giving the proof of this proposition, let us first state and prove the following corollary, which concludes the proof of Theorem \ref{main_result}:
\begin{Cor}\label{corfinal}
For any $(d,d')$ such that $0\leq d < \frac{\Phi}{2} < d'$,~\eqref{etoile} holds.
\end{Cor}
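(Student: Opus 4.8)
The plan is to deduce Corollary~\ref{corfinal} from Proposition~\ref{minimality} almost for free, by reformulating \eqref{etoile} as a pointwise lower bound on the function $F_{(d,d')}$. Fix a pair $(d,d')$ with $0 \le d < \frac{\Phi}{2} < d'$ and fix $X \in \mathcal{D}_+$. Since $d \neq d'$, the geodesic $\gamma_{d,d'}$ is non-degenerate, so by Proposition~\ref{prop:banane} we have $\sin\theta(X,d,d') = 1/\cosh d_{\mathrm{hyp}}(X,\gamma_{d,d'}) > 0$; hence \eqref{etoile} at $X$ is equivalent to
\[
K(d,d') \;\le\; K(0,\infty)\,\frac{\sin\theta(X,0,\infty)}{\sin\theta(X,d,d')} \;=\; K(0,\infty)\,F_{(d,d')}(X).
\]

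Next I would invoke Proposition~\ref{minimality}: under the standing assumption $0 \le d < \frac{\Phi}{2} < d'$, the function $F_{(d,d')}$ attains its minimum over $\mathcal{D}_+$ at $X = X_0$. Therefore the displayed inequality for every $X \in \mathcal{D}_+$ follows once it is known for $X = X_0$, i.e.\ once
\[
K(d,d')\,\sin\theta(X_0,d,d') \;\le\; K(0,\infty)\,\sin\theta(X_0,0,\infty).
\]
But the right-hand side equals $K(X_0)$ by Corollary~\ref{Victoire_en _X0}, while Proposition~\ref{prop:other:def} applied at $X=X_0$ gives $K(X_0) = \sup_{e \neq e'} K(e,e')\sin\theta(X_0,e,e') \ge K(d,d')\sin\theta(X_0,d,d')$. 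So the inequality at $X_0$ holds trivially, which is exactly \eqref{etoile} at $X_0$, and the corollary follows.

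In short, once Proposition~\ref{minimality} is available, Corollary~\ref{corfinal} is a two-line consequence of the $\sin\theta = 1/\cosh d_{\mathrm{hyp}}$ dictionary, Corollary~\ref{Victoire_en _X0}, and the supremum formula of Proposition~\ref{prop:other:def}. The genuine difficulty is therefore entirely contained in Proposition~\ref{minimality}, and I expect that to be the main obstacle: one must show that for $0 \le d < \frac{\Phi}{2} < d'$ the ratio of sines $F_{(d,d')}$ is minimized at the corner $X_0$ of the fundamental domain. This should require an explicit analysis of $F_{(d,d')}$ on $\mathcal{D}_+$ — expressing $\sin\theta(X,d,d')$ via hyperbolic distances to $\gamma_{0,\infty}$ and $\gamma_{d,d'}$, using the banana level sets of Proposition~\ref{prop:banane}, and exploiting the constraints $d < \frac{\Phi}{2} < d'$ and $d+d' > \Phi$ from Lemma~\ref{hypothese_dd'} to locate the minimum — which is the content of the proof of that proposition below.
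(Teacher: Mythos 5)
Your proof is correct and follows the same route as the paper: establish \eqref{etoile} at $X_0$ via Corollary~\ref{Victoire_en _X0} together with the supremum formula of Proposition~\ref{prop:other:def}, then transport it to all of $\mathcal{D}_+$ using the minimality of $F_{(d,d')}$ at $X_0$ from Proposition~\ref{minimality}. The only difference is that you make explicit two points the paper leaves implicit (the positivity of $\sin\theta(X_0,d,d')$ permitting the division, and the citation of Proposition~\ref{prop:other:def}), which is a welcome clarification rather than a deviation.
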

\begin{proof}[Proof of Corollary \ref{corfinal}]
Let $(d,d')$ be such that $0\leq d < \frac{\Phi}{2} < d'$, and $X \in \mathcal{D}_+$. We know from Corollary \ref{Victoire_en _X0} that
\begin{equation*}
K(d,d') \sin \theta(X_0,d,d') \leq K(0,\infty) \sin \theta(X_0, 0,\infty).
\end{equation*}
In particular, by minimality of $F_{(d,d')}$ at $X_0$
\begin{equation*}
K(d,d')\leq K(0,\infty) F_{(d,d')}(X_0) \leq K(0,\infty) F_{(d,d')}(X) 
\end{equation*}
Hence
\begin{equation*}
K(d,d') \sin \theta(X,d,d')\leq K(0,\infty) \sin \theta(X, 0,\infty)
\end{equation*}
This concludes the proof.
\end{proof}

\begin{proof}[Proof of Proposition \ref{minimality}] We divide the proof in 5 steps:
\begin{enumerate}
\item We remark that $F_{(d,d')}$ is well defined and differentiable on $\HH$, and has a well defined minimum on $\mathcal{D}_+$. 
\item We study the gradient of $F_{(d,d')}$ in $\mathcal{D}_+$ and show that it doesn't vanish inside~$\mathcal{D}_+$.

\item We remark that $F_{(d,d')}$ is not minimal at the left boundary of $\mathcal{D}_+$.

\item We study the variations of $F_{(d,d')}$ on the lower boundary of $\mathcal{D}_+$, which we  parametrize as 
$\left\lbrace (\cos \theta, \sin \theta) : \theta \in 
\left[ \frac{\pi}{n}, \frac{\pi}{2} \right] \right\rbrace $,  and show that $F_{(d,d')}$ increases with $\theta$.

\item We study the variations of $F_{(d,d')}$ on the line $x=\frac{\Phi}{2}$ and show that it increases strictly with $y$.
\end{enumerate}

\textbf{Proof of step 1.}\newline
Note that by Proposition~\ref{prop:banane}
 \[  
 F_{(d,d')}(X)=
 \frac{ \cosh \mbox{d}_{hyp} (X, \gamma_{d,d'})}{ \cosh \mbox{d}_{hyp} (X,  \gamma_{0,\infty})}
 \]
where $\mbox{d}_{hyp} (X, \gamma_{d,d'})$ is the hyperbolic distance from $X$ to the geodesic $\gamma_{d,d'}$. Distance functions are not differentiable, but their $\cosh$'s are. 

Moreover, $F_{(d,d')}(x+iy) \to +\infty$ when $y \to +\infty$ (and $x \in [0,\frac{\Phi}{2}]$), so if $A>0$ is sufficiently big, we have $F_{(d,d')}(X) > F_{(d,d')}(X_0)$ for any $X=x+iy \in \mathcal{D}_+$ with $y>A$. In particular, $F_{(d,d')}$ reaches its minimum on the compact set $\mathcal{K} = \mathcal{D}_+\cap \{x+iy| y\leq A\}$. This finishes the proof of step 1.\newline

\textbf{Proof of step 2.}\newline
Note that since the natural logarithm is an increasing diffeomorphism, we may as well look for the minimum of $\log F_{(d,d')}(X)$ over $\mathcal{D}_+$. Now 

\begin{eqnarray*}
\nabla \log F_{(d,d')}(X)&=& 
\frac{\nabla \cosh \mbox{d}_{hyp} (X, \gamma_{d,d'})}
{\cosh \mbox{d}_{hyp} (X, \gamma_{d,d'})} -
\frac{\nabla \cosh \mbox{d}_{hyp} (X,  \gamma_{0,\infty})}
{\cosh \mbox{d}_{hyp} (X,  \gamma_{0,\infty})}\\
&=& \tanh \mbox{d}_{hyp} (X, \gamma_{d,d'}) \nabla  \mbox{d}_{hyp} (X,  \gamma_{d,d'})-
\tanh \mbox{d}_{hyp} (X,  \gamma_{0,\infty}) \nabla \mbox{d}_{hyp} (X,  \gamma_{0,\infty}).
\end{eqnarray*}
Now the distance gradients are unit vectors, and they are parallel only along the common perpendicular (if it exists) to $\gamma_{0,\infty}$ and $\gamma_{d,d'}$, or never (otherwise), so the gradient of $F_{(d,d')}$ cannot vanish outside of the common perpendicular. Along the common perpendicular, the numbers 
$\tanh \mbox{d}_{hyp} (X, \gamma_{d,d'})$ and $\tanh \mbox{d}_{hyp} (X,  \gamma_{0,\infty})$ are equal only at the middle of the common perpendicular segment between the two lines, and there the distance gradients point in opposite directions. So the gradient of $F_{(d,d')}$ cannot vanish at all. Thus $F_{(d,d')}$ does not have a minimum in the interior of $\mathcal{D}_+$.\newline
		
\textbf{Proof of step 3.}\newline
On the left boundary of $\mathcal{D}_+$, which is contained in $\gamma_{0,\infty}$,
 we have $ \nabla  \mbox{d}_{hyp} (X,  \gamma_{0,\infty})=0$, and $ \nabla  \mbox{d}_{hyp} (X, \gamma_{d,d'})$ points to the left because $d \geq 0$. Therefore no point on the left boundary is a local minimum for $F_{(d,d')}$.\newline

\textbf{Proof of step 4.}\newline
Now let us study the function $F_{(d,d')}(X)$ restricted to the lower boundary of $\mathcal{D}_+$.

We first give a more convenient expression of $F_{(d,d')}$. Let $X = x+ iy$ be a point in the domain $\mathcal{D}_+$. We have:
\begin{equation*}
\sin \theta(X, 0,\infty)  = \frac{y}{\sqrt{x^2+y^2}}
\end{equation*}
And since the matrix $\begin{pmatrix} -1 & d \\ \frac{1}{d'-d} & \frac{-d'}{d'-d} \end{pmatrix} \in SL_2(\RR)$ acts on $\HH$ by isometry and sends the geodesic $\gamma_{d,d'}$ to $\gamma_{0,\infty}$ and $x+iy$ to 
\begin{equation*}
\tilde{x}+i\tilde{y} = \frac{(-x-iy+d)(d'-d)}{x+iy-d'} =\frac{d-d'}{(x-d')^2+y^2}\cdot (-(x-d)(x-d')-y^2+iy(d'-d))
\end{equation*}
we have:
\begin{equation*}
\sin \theta(X, d,d') = \frac{\tilde{y}}{\sqrt{\tilde{x}^2+\tilde{y}^2}} = \frac{y(d'-d)}{\sqrt{((x-d)(x-d')+y^2)^2+y^2(d'-d)^2}}
\end{equation*}
Hence:
\begin{equation}\label{F_en x et y}
F_{(d,d')}(X) = \frac{1}{d'-d}\sqrt{\frac{((x-d)(x-d')+y^2)^2+y^2(d'-d)^2}{x^2+y^2}}
\end{equation}
To study the  variations of $F_{(d,d')}(X)$, it suffices to consider what is inside the square root in  (\ref{F_en x et y}):
\begin{equation*}
G: (x,y)\mapsto \frac{((x-d)(x-d')+y^2)^2+y^2(d'-d)^2}{x^2+y^2}
\end{equation*}
On the lower boundary of $\mathcal{D}_+$, 
this reduces to 
\[
(1+dd' -(d+d') \cos \theta)^2 + (d'-d)^2 \sin^2 \theta=
(1+dd')^2 - 2(1+dd')(d+d')\cos \theta + 2dd' \cos 2\theta -1
\]
whose derivative with respect to $\theta$ is
\[ 
-4dd' \sin 2\theta +2(1+dd')(d+d')\sin \theta =
2 \sin \theta \left[(1+dd')(d+d')-4dd'\cos \theta \right]. 
\]
We want to prove that $G$ is an increasing function of $\theta$. This follows from $(1+dd')(d+d') \geq 4dd'$, which in turn follows from $d+d' \geq 2 \sqrt{dd'}$, and the fact that $x^2-2x+1 \geq0$ for any real number $x$, in particular for $x=\sqrt{dd'}$.\newline

\textbf{Proof of step 5.}\newline
We compute the differential of $G$ with respect to $y$. It gives
\begin{equation*}
\frac{\partial G}{\partial y}(x,y) = \frac{2y}{(x^2+y^2)^2} \cdot (y^4+2x^2y^2 + x^4 - dd'(2x-d)(2x-d')).
\end{equation*}
The sign of $\frac{\partial G}{\partial y}(x,y)$ is the sign of the polynomial $P(X) = X^2+2x^2X + x^4 - dd'(2x-d)(2x-d')$, which has discriminant
\begin{equation*}
\Delta = 4dd'(2x-d)(2x-d').
\end{equation*}
Setting $x = \frac{\Phi}{2}$ yields:
\begin{equation*}
\Delta = 4dd'(\Phi-d)(\Phi-d').
\end{equation*}
In particular:
\begin{itemize}
\item If $d'>\Phi$, then $\Delta <0$ and $P$ has no real roots.
\item If $d'=\Phi$ then $\Delta=0$ and the only real root of $P$ is $-\frac{\Phi^2}{4} <0$.
\item Else, $\frac{\Phi}{2}<d' < \Phi$ and $P$ has two real roots:
\begin{equation*}
\lambda_- = -\frac{\Phi^2}{4} - \sqrt{dd'(\Phi-d)(\Phi-d')} <0 \text{ and } \lambda_+ = -\frac{\Phi^2}{4} + \sqrt{dd'(\Phi-d)(\Phi-d')}.
\end{equation*}
But $d(\Phi-d) \leq \frac{\Phi^2}{4}$ and $d'(\Phi-d') \leq \frac{\Phi^2}{4}$ so $\sqrt{dd'(\Phi-d)(\Phi-d')} \leq \frac{\Phi^2}{4}$ and $\lambda_+ \leq 0$.
\end{itemize}
In conclusion, $P$ has no real positive roots, in particular it is positive on $\RR_+^*$, and so is $\frac{\partial G}{\partial y}(\frac{\Phi}{2},y)$. This finishes the proof of the last step.
\newline

In particular, the only possible minimum for $F_{(d,d')}$ on $\mathcal{D}_+$ is $X_0$. This proves Proposition \ref{minimality}.
\end{proof}

\bibliographystyle{alpha}
\bibliography{KVol_bibli}
\end{document}